\let\oldmarginpar\marginpar
\renewcommand\marginpar[1]{\oldmarginpar[\raggedleft\footnotesize #1]%
{\raggedright\footnotesize #1}}
\renewcommand{\setminus}{{\smallsetminus}}
\newcommand{\RR}{{\mathbb{R}}}
\newcommand{\Z}{{\mathbb{Z}}}
\newcommand{\C}{{\mathbb{C}}}
\newcommand{\QQ}{{\mathbb{Q}}}
\newcommand{\vol}{{\rm vol}}
\newcommand{\Tr}{\mathrm{Tr}}
\theoremstyle{plain}
\newtheorem{theorem}{Theorem}[section]
\newtheorem{corollary}[theorem]{Corollary}
\newtheorem{lemma}[theorem]{Lemma}
\newtheorem{proposition}[theorem]{Proposition}
\newtheorem{conjecture}[theorem]{Conjecture}
\newtheorem*{namedtheorem}{\theoremname}
\newcommand{\theoremname}{testing}
\newenvironment{named}[1]{\renewcommand{\theoremname}{#1}\begin{namedtheorem}}{\end{namedtheorem}}
\theoremstyle{definition}
\newtheorem{definition}[theorem]{Definition}
\newtheorem{remark}[theorem]{Remark}
\title{Quantum representations and monodromies of fibered links}
\author{Renaud Detcherry}
\author{Efstratia Kalfagianni}
\address[]{Department of Mathematics, Michigan State University, East
Lansing, MI, 48824, USA}
\email[]{kalfagia@math.msu.edu}
\email[]{renaud.detcherry@gmail.com}
\begin{document}

\thanks{Research supported by NSF Grants DMS-1404754 and DMS-1708249}
\date{\today}

\begin{abstract}  Andersen, Masbaum and Ueno conjectured that 
certain quantum representations of surface mapping class groups should send pseudo-Anosov mapping classes to elements of infinite order (for large enough level $r$). 
In this paper, we relate the AMU conjecture to a question about the growth of the Turaev-Viro invariants $TV_r$ of hyperbolic 3-manifolds. We show that if the $r$-growth of  $|TV_r(M)|$ for a hyperbolic  3-manifold $M$ that fibers over the circle is exponential,  then the monodromy of the fibration  of $M$ satisfies the AMU conjecture.  
Building on earlier work \cite{DK} we give broad constructions of (oriented) hyperbolic fibered links, of arbitrarily high genus,  whose  $SO(3)$-Turaev-Viro invariants have exponential $r$-growth.
As a result, for any $g>n\geqslant 2$, we
 obtain  infinite families of non-conjugate pseudo-Anosov mapping classes, acting on surfaces of genus $g$ and $n$ boundary components, that satisfy the AMU conjecture.
 
 We also discuss integrality properties of the traces of quantum representations and we answer a question of Chen and Yang about Turaev-Viro invariants of torus links. 
\end{abstract}

\vskip 0.1in



\maketitle

\section{Introduction}
Given a compact oriented surface $\Sigma,$ possibly with boundary, the mapping class group $\mathrm{Mod}(\Sigma)$ is the group of isotopy classes
of orientation preserving homeomorphisms of $\Sigma$ that fix the boundary.
The  Witten-Reshetikhin-Turaev Topological Quantum Field Theories \cite{ReTu, Turaevbook} provide  families of finite dimensional projective representations of mapping class groups. 
For each semi-simple Lie algebra, there is an associated theory and an infinite family of such representations.
In this article we are concerned with the 
 $SO(3)$-theory and we will follow the skein-theoretic framework given by Blanchet, Habegger, Masbaum and Vogel \cite{BHMV2}: For each odd integer $r\geqslant 3,$  let $U_r=\lbrace 0,2,4,\ldots, r-3\rbrace$ be the set of even integers smaller than $r-2.$  Given a primitive $2r$-th root of unity $\zeta_{2r},$  a compact oriented surface $\Sigma,$ and a coloring $c$ of the  components of $\partial \Sigma$ by elements of $U_r,$ a finite dimensional $\C$-vector space $RT_r(\Sigma,c)$ is constructed in \cite{BHMV2}, as well as a projective representation:
$$\rho_{r,c} : \mathrm{Mod}(\Sigma) \rightarrow \mathbb{P}\mathrm{Aut}(RT_r(\Sigma,c)).$$
 For
 different choices of root of unity,  the traces of  $\rho_{r,c}$, that are of particular interest to us in this paper,
 are related by actions of Galois groups of cyclotomic fields.
 Unless otherwise indicated, we will always choose $\zeta_{2r}=e^{\frac{i\pi}{r}},$ which is important for us in order to apply
results from \cite{DK, DKY}. 

The representation $\rho_{r,c}$ is called the $SO(3)$-quantum representation of $\mathrm{Mod}(\Sigma)$ at level $r.$ 
Although the representations  are known to be asymptotically faithful \cite{FZW, Andersen}, the question of how well these representations reflect the 
geometry of the mapping class groups remains wide open.

By the  Nielsen-Thurston classification, mapping classes $f\in \mathrm{Mod}(\Sigma)$ are divided into three types: periodic, reducible and
pseudo-Anosov.  Furthermore, the type of $f$ determines the geometric structure, in the sense of Thurston, of the 3-manifold obtained as mapping torus of $f.$
In \cite{AMU} Andersen, Masbaum and Ueno formulated the following conjecture and proved it  when $\Sigma$ is the four-holed sphere.

\begin{conjecture}\label{AMU}{\rm{(AMU conjecture \cite{AMU})}}{ Let $\phi \in  \mathrm{Mod}(\Sigma)$ be a pseudo-Anosov mapping class. Then for any big enough level $r,$ there is a choice of colors $c$ of the components of $\partial \Sigma,$ such that $\rho_{r,c}(\phi)$ has infinite order.}
\end{conjecture}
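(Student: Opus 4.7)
The plan is to translate the conjecture from mapping class groups to quantum $3$-manifold invariants via the mapping torus construction. Since $\phi$ is pseudo-Anosov, Thurston's geometrization guarantees that the mapping torus $M_\phi$ is hyperbolic, so one can hope to extract the infinite-order statement about $\rho_{r,c}(\phi)$ from (conjectural) growth properties of geometric quantum invariants attached to $M_\phi$, rather than by analyzing the representations directly.

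The key reduction relies on the TQFT trace formula. By the standard gluing axiom, the Reshetikhin--Turaev invariant of $M_\phi$ with appropriate boundary data equals $\Tr(\rho_{r,c}(\phi))$, and correspondingly $|TV_r(M_\phi)|$ has an expression of the form $\sum_{c}|\Tr(\rho_{r,c}(\phi))|^2$, up to a normalization. If $\rho_{r,c}(\phi)$ had finite order for \emph{every} admissible $c$, its eigenvalues would all be roots of unity and each $|\Tr(\rho_{r,c}(\phi))|$ would be bounded by $\dim RT_r(\Sigma,c)$, which is polynomial in $r$. Since the number of admissible colorings is also polynomial in $r$, this would force $|TV_r(M_\phi)|$ to grow at most polynomially. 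Therefore exponential $r$-growth of $|TV_r(M_\phi)|$ implies that some $\rho_{r,c}(\phi)$ has infinite order, establishing Conjecture~\ref{AMU} for that particular $\phi$.

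The main obstacle is then to certify exponential growth of $|TV_r(M_\phi)|$ in concrete cases, which in full generality is the Chen--Yang volume conjecture and is wide open. I would attack it in the fibered-link setting: take $M_\phi = S^3\setminus L$ for a hyperbolic fibered link $L$ and use skein-theoretic computations with Kauffman brackets and colored Jones polynomials at roots of unity to obtain explicit exponential lower bounds, leveraging the machinery developed in \cite{DK}. To realize the advertised range $g>n\geqslant 2$ of fiber topologies, I would produce infinite families of hyperbolic fibered links whose fibers have prescribed genus $g$ and number of boundary components $n$, and then distinguish the resulting monodromies up to conjugacy via classical invariants such as dilatation --- a combinatorial, rather than quantum, task that should not be the bottleneck once the growth estimates are in hand.
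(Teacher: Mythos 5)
Be careful about what is actually being proved: the statement is the AMU conjecture itself, which the paper does not prove and does not claim to prove in full generality. Your first two paragraphs reproduce, correctly, exactly the paper's reduction (the proof of Theorem \ref{amu-ltv}): by Theorem \ref{thm:BePe} and the trace formula of Theorem \ref{thm:tracequantumrep} one gets $TV_r(M_\phi)=\sum_c |\Tr \rho_{r,c}(\phi)|^2$, the dimensions $\dim RT_r(\Sigma,c)$ and the number of colorings grow only polynomially in $r$ (Theorem \ref{thm:TQFTbasis}), and an element of $\mathrm{GL}_n(\C)$ whose trace exceeds $n$ in modulus has infinite order (Lemma \ref{lem:order}; your ``finite order forces root-of-unity eigenvalues'' argument is the contrapositive, and like the paper it implicitly uses that the projective ambiguity is a root of unity so that finite projective order gives finite linear order of a lift). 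So the reduction step is correct and is essentially the same as the paper's.

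The genuine gap is that this reduction does not prove Conjecture \ref{AMU}: the hypothesis $lTV(M_\phi)>0$ is exactly the open part (in general it is the Chen--Yang volume conjecture \cite{Chen-Yang}), and your third paragraph only sketches how one might verify it and produce examples, which is where all of the paper's actual work lies. Concretely, the paper (i) proves a hyperbolic version of Stallings' homogenization (Theorem \ref{mainofsection}), with the condition $(\clubsuit)$ on the Stallings component needed to rule out essential tori, so that any link with $lTV>0$ (e.g.\ containing $4_1$ or the Borromean rings, via the Dehn-filling monotonicity of Theorem \ref{thm:ltvdehnfilling} from \cite{DK}) sits inside hyperbolic fibered links of arbitrarily large fiber genus; and (ii) obtains \emph{infinitely many non-conjugate} monodromies on a fixed surface not by computing dilatations, as you suggest, but by Stallings twists along a curve meeting the $lTV>0$ sublink controllably, combined with the Long--Morton theorem \cite{LongMorton} and Thurston's hyperbolic Dehn surgery to distinguish mapping tori by volume. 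Also note the ranges actually achieved (Theorem \ref{general}: $n=2$, $g\geqslant 3$, or $g\geqslant n\geqslant 3$) require these specific constructions; asserting that this part ``should not be the bottleneck'' leaves the proposal as a program, matching the paper's strategy, rather than a proof of the conjectured statement.
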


Note that it is known that the representations $\rho_{r,c}$  send Dehn twists to elements of finite order and criteria for recognizing reducible mapping classes from their images under  $\rho_{r,c}$
are given in
 \cite{Andersen2}.

The results of \cite{AMU}  were extended by   Egsgaard and Jorgensen in \cite{EgsJorgr} and by Santharoubane in \cite{San17}
to prove Conjecture \ref{AMU} for some mapping classes of spheres with $n\geqslant 5$ holes.
In \cite{San12},
Santharoubane  proved the conjecture for 
the one-holed torus.
However, until recently there were no known
cases of the AMU conjecture for mapping classes of surfaces of genus at least $2.$ In \cite{MarSan}, March\'e and Santharoubane used skein theoretic techniques in 
$\Sigma \times S^1$ to obtain  such examples of mapping classes in arbitrary high genus. As explained by Koberda  and Santharoubane \cite{KS}, by means of
Birman exact sequences of mapping class groups, one extracts representations of  $\pi_1(\Sigma)$ from the representations  $\rho_{r,c}.$
Elements in $\pi_1(\Sigma)$ that correspond to pseudo-Anosov mappings classes via Birman exact sequences are characterized by a result of Kra \cite{Kra}.
March\'e and Santharoubane used this approach to obtain their examples of pseudo-Anosov mappings classes satisfying the AMU conjecture by  exhibiting elements in $\pi_1(\Sigma)$ satisfying an additional  technical condition they called Euler incompressibility. However, they informed us that they suspect their construction yields only finitely many mapping classes in any surface of fixed genus, up to mapping class group action.

The purpose of the present paper is to describe an alternative 
method  for approaching the AMU conjecture and use it to construct  mapping classes  
acting on surfaces of any genus, that satisfy the conjecture.  In particular, we produce infinitely many non-conjugate mapping classes acting on surfaces of fixed genus that satisfy the conjecture. 
Our approach  is to relate the conjecture with a question on the growth rate, with respect to $r$, of the $SO(3)$-Turaev-Viro 3-manifold invariants $TV_r$.

 For $M$ a compact orientable $3$-manifold, closed or with boundary, the invariants $TV_r(M)$ are real-valued topological invariants of $M,$ that can be computed from state sums over triangulations of $M$ and are closely related to the $SO(3)$-Witten-Reshetikhin-Turaev TQFTs.
For a compact 3-manifold $M$ (closed or with boundary) we define:
$$lTV(M)=\underset{r \rightarrow \infty, \ r \ \textrm{odd}}{\liminf} \frac{2\pi}{r}\log |TV_r(M,q)|,$$
where $q=\zeta_{2r}=e^{\frac{2i\pi}{r}}$.

Let $f \in \mathrm{Mod}(\Sigma)$ be a mapping class represented by a pseudo-Anosov homeomorphism  of $\Sigma$ and 
let $M_f=F \times [0,1]/_{(x,1)\sim ({ {f}}(x),0)}$ be the mapping torus of $f$.

\begin{theorem}\label{amu-ltv}Let $f \in \mathrm{Mod}(\Sigma)$ be a  pseudo-Anosov  mapping class and 
let $M_{f}$ be the mapping torus of $f$. If $lTV(M_{f})>0,$ then $f$ satisfies the conclusion of the AMU conjecture.
\end{theorem}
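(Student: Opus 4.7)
The strategy is to prove the contrapositive: assuming the conclusion of the AMU conjecture fails for $f$, I will deduce that $lTV(M_f)\le 0$. So suppose there is an infinite increasing sequence of odd levels $r_k\to\infty$ such that for every $r=r_k$ and every coloring $c$ of the components of $\partial \Sigma$ by elements of $U_r$, the projective operator $\rho_{r,c}(f)$ has finite order.

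The key ingredient is a trace formula for Turaev-Viro invariants of mapping tori. Combining Roberts' identity $TV_r(M)=|RT_r(M)|^2$ with the TQFT interpretation of the Reshetikhin-Turaev vector of a mapping torus as the trace of $\rho_{r,c}(f)$ acting on $RT_r(\Sigma,c)$, one expects a formula of the shape
\[
TV_r(M_f) \;=\; \sum_{c}\, \kappa_r(c)\,\bigl|\Tr\bigl(\rho_{r,c}(f)\bigr)\bigr|^2,
\]
where $c$ ranges over colorings of $\partial \Sigma$ by elements of $U_r$, and $\kappa_r(c)$ is a normalization factor built from quantum integers attached to the boundary components; in particular $\kappa_r(c)$ is bounded by a polynomial in $r$. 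Because $\rho_{r,c}$ is only projective, the trace is defined up to a unit scalar, but $|\Tr(\rho_{r,c}(f))|$ is intrinsic.

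Under the failure assumption, for $r=r_k$ and every $c$ we have $\rho_{r,c}(f)^N=\lambda\,\mathrm{Id}$ for some integer $N$ and scalar $\lambda\in\C$. Choosing a unitary lift (the $SO(3)$-TQFT is unitary at $\zeta_{2r}=e^{i\pi/r}$), all eigenvalues of $\rho_{r,c}(f)$ share the same modulus, which must equal $1$, and consequently
\[
\bigl|\Tr\bigl(\rho_{r,c}(f)\bigr)\bigr| \;\leq\; \dim RT_r(\Sigma,c).
\]
By the Verlinde formula this dimension is a polynomial in $r$ of degree depending only on the topology of $\Sigma$, and the number of colorings $c$ is at most $|U_r|^{|\partial \Sigma|}$, again polynomial in $r$. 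Hence $|TV_{r_k}(M_f)|$ is polynomially bounded in $r_k$, so $\tfrac{2\pi}{r_k}\log|TV_{r_k}(M_f)|\to 0$ along the subsequence $(r_k)$, forcing $\liminf_{r\to\infty}\tfrac{2\pi}{r}\log|TV_r(M_f)|\le 0$, which contradicts $lTV(M_f)>0$.

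The main obstacle is making the trace formula precise in the bounded-boundary setting: one must identify the Turaev-Viro invariant of $M_f$ (with the normalization used in the definition of $lTV$) with the TQFT self-pairing of the $RT_r$-vector of $M_f$, and keep track of the quantum-integer factors arising from the gluing that produces the mapping torus from $\Sigma\times[0,1]$. Once this identity is established, the remaining steps---the trace bound via the Verlinde dimension, the polynomial count of colorings, and the passage to $lTV$---are essentially formal. Note that pseudo-Anosov-ness of $f$ enters only through the statement of the AMU conjecture and plays no role in the estimate.
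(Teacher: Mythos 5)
Your route is the paper's own argument run contrapositively: the same trace identity, the same polynomial bound on the dimensions, and the same dichotomy between trace size and projective order, so nothing here is a genuinely different approach. Two points keep it from being a proof as written. First, the identity you only ``expect'' holds exactly, with $\kappa_r(c)\equiv 1$: combining Theorem \ref{thm:BePe} with the orthonormal basis $\mathbf{e}_c=e_{c_1}\otimes\cdots\otimes e_{c_n}$ of $RT_r(\partial M_f)$ from Theorem \ref{thm:TQFTbasis}(2), one gets $TV_r(M_f)=\sum_c|\langle RT_r(M_f),\mathbf{e}_c\rangle|^2$, and each coefficient is identified with $RT_r(M_{\tilde f},(L,c))=\Tr\rho_{r,c}(f)$ by Theorem \ref{thm:tracequantumrep}; this derivation is the actual content of the paper's proof, so leaving it as ``the main obstacle'' is leaving out the heart of the argument, even though you correctly observe that any polynomially bounded normalization would not affect the $lTV$ asymptotics.

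Second, and more seriously, your justification of the key estimate $|\Tr\rho_{r,c}(f)|\leqslant\dim RT_r(\Sigma,c)$ for projectively finite-order images rests on the claim that the $SO(3)$-TQFT is unitary at $\zeta_{2r}=e^{i\pi/r}$. That claim is false, and its falsity is essential to the theorem having content: if the Hermitian form of Theorem \ref{thm:TQFTdef} were positive definite at this root, then \emph{every} $\rho_{r,c}(f)$ would have trace bounded by the dimension, $TV_r(M_f)$ would be polynomially bounded for every monodromy, and the hypothesis $lTV(M_f)>0$ could never be met (whereas, e.g., the figure-eight complement does satisfy it). The step you need --- that $\rho_{r,c}(f)^N=\lambda\,\mathrm{Id}$ forces all eigenvalues of the canonical lift to have modulus one --- does not follow from finite projective order alone; it can be repaired, for instance, by noting that the lifts preserve the non-degenerate (in general indefinite) Hermitian form up to a modulus-one anomaly scalar, hence have $|\det|=1$, which forces $|\lambda|=1$ and then $|\Tr\rho_{r,c}(f)|\leqslant\dim RT_r(\Sigma,c)$. (The paper argues the forward implication instead, via Lemma \ref{lem:order}: a trace of modulus larger than the dimension forces an eigenvalue of modulus larger than one.) With the trace identity proved and the unitarity claim replaced by such an argument, your contrapositive coincides with the paper's proof.
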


The proof of the theorem relies heavily on the properties of  TQFT underlying the  Witten-Reshetikhin-Turaev $SO(3)$-theory as developed in \cite{BHMV2}.

As a consequence of Theorem \ref{amu-ltv} whenever we have a hyperbolic 3-manifold $M$ with $lTV(M)>0$ that fibers over the circle, then the monodromy of the fibration represents a mapping class
that satisfies the AMU conjecture.

By a theorem of Thurston, a mapping class $f\in \mathrm{Mod}(\Sigma)$  is represented by a pseudo-Anosov  homeomorphism of $\Sigma$ 
if and only if the mapping torus $M_{f}$ is hyperbolic. 
In  \cite{Chen-Yang}  Chen and Yang conjectured that for any hyperbolic 3-manifold with finite volume  $M$ we should have $lTV(M)=\vol (M)$. Their conjecture implies, in particular, that
 the aforementioned  technical condition $lTV(M_f)>0$ is  true for all  pseudo-Anosov  mapping classes $f\in  \mathrm{Mod}(\Sigma)$.  Hence, the Chen-Yang conjecture implies the AMU conjecture.

Our method can also be used to produce new families of mapping classes acting on punctured spheres that satisfy the AMU conjecture (see Remark \ref{spheres}). 
In this paper we will be concerned with surfaces with boundary and mapping classes that appear as monodromies of fibered links in $S^3.$
In \cite{BDKY}, with Belletti and Yang, we construct  families of 3-manifolds  in which the monodromies of all hyperbolic fibered links  satisfy the AMU conjecture.
In this paper show the following.
\begin{theorem} \label{hyperbgeneral}  Let  $L\subset S^3$ be a link with $lTV(S^3\setminus L)>0.$ 
Then 
 there are  fibered hyperbolic links  $L',$ with $L\subset L'$ and $lTV(S^3\setminus L')>0,$  and
 such that the complement of $L'$ fibers over $S^1$ with fiber a surface of arbitrarily large genus.
 In particular, the monodromy of such a fibration
  gives a mapping class 
in $\mathrm{Mod}(\Sigma)$ that satisfies the AMU conjecture.
\end{theorem}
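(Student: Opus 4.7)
The strategy is to build $L'$ from $L$ in two stages: first, enlarge $L$ to a fibered hyperbolic link $L_0\supset L$ while keeping $lTV>0$; then stabilize $L_0$ by Hopf plumbings to produce fibered links $L_g\supset L$ whose fiber genus grows without bound. Applying Theorem \ref{amu-ltv} to the pseudo-Anosov monodromy of the resulting fibration then yields the AMU conclusion.

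For the first stage, present $L$ as a closed braid in $S^3$ and let $A$ be its braid axis. Setting $L_0:=L\cup A$, the complement $S^3\setminus L_0$ fibers over $S^1$ with fiber an $n$-punctured disk (where $n$ is the braid index) and monodromy given by the braid word. A sufficiently generic braid presentation, obtained via Markov stabilization if needed, ensures that $L_0$ is hyperbolic by ruling out torus-link and satellite configurations. Since $S^3\setminus L_0$ is obtained from $S^3\setminus L$ by drilling a solid torus neighborhood of $A$, the TQFT gluing axioms of \cite{BHMV2} express $|TV_r(S^3\setminus L_0)|$ in terms of $|TV_r(S^3\setminus L)|$ and coefficients of controlled $r$-asymptotics; combined with the given exponential lower bound on $|TV_r(S^3\setminus L)|$, this yields $lTV(S^3\setminus L_0)>0$.

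For the second stage, iteratively plumb positive Hopf bands to the fiber surface, producing a sequence of fibered links $L_0\subset L_1\subset\cdots\subset L_g$. By the Stallings--Gabai theorem, each Hopf plumbing preserves fiberedness and increases fiber genus by one, while adding a new boundary component (the core of the plumbed band) without modifying the $L$-components; hence $L\subset L_g$ for every $g$. Choosing plumbing arcs generically rules out essential annuli and tori in the complement, so each $L_g$ remains hyperbolic. In the skein-theoretic framework, the Hopf-plumbing operation factors as a drilling followed by gluing with an explicit linear map whose norm is polynomially bounded in $r$, so positive $lTV$ is preserved at each plumbing step.

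The main obstacle is the $lTV$ control under these operations: a priori, drilling the braid axis or plumbing a Hopf band could destroy the exponential $r$-growth of $|TV_r|$. Handling this requires the skein-theoretic estimates from \cite{DK, DKY}, in which the $SO(3)$-TQFT of a cusped $3$-manifold is compared to that of its Dehn fillings by a sum over boundary colorings with explicit $r$-asymptotics, as well as a careful bookkeeping of how the TQFT vector attached to the fiber transforms under a Stallings twist. Once $lTV(S^3\setminus L_g)>0$ is established, Theorem \ref{amu-ltv} applied to the pseudo-Anosov monodromy $\phi_g$ of the fibration $S^3\setminus L_g\to S^1$ shows $\phi_g$ satisfies the AMU conjecture, with fiber genus unbounded as $g\to\infty$.
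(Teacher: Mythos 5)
Your two-stage outline (first enlarge $L$ to a fibered link containing it, then raise the fiber genus, controlling $lTV$ throughout) is a legitimately different route from the paper, which instead homogenizes a braid presentation of $L$ by adding a single Stallings component and gets unbounded fiber genus and hyperbolicity in one construction. But as written your proposal has gaps at exactly the points where the real work lies. First, hyperbolicity is asserted, not proved. The complement of $L\cup A$ (closed braid plus axis) is hyperbolic if and only if the braid is pseudo-Anosov as a mapping class of the punctured disk, and ``a sufficiently generic braid presentation, obtained via Markov stabilization if needed'' is not an argument: you must exhibit such a presentation with closure $L$ and verify it. Likewise ``choosing plumbing arcs generically rules out essential annuli and tori'' is precisely the statement that needs proof; Hopf plumbing does not preserve hyperbolicity in general, and there is no off-the-shelf genericity lemma. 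The paper's Section 4 exists to do this job: condition $(\clubsuit)$, the $\mathrm{PSL}_2(\C)$ trace estimates (Lemmas \ref{lem:traces}, \ref{lem:meridians}), the winding-number Lemma \ref{lem:windingnumber}, and the atoroidality plus Gromov-norm argument of Proposition \ref{prop:hyperbolic}.

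Second, the plumbing bookkeeping is incorrect and the $lTV$ control is argued the wrong way. Plumbing a Hopf band lowers $\chi$ of the fiber by exactly one, so it cannot simultaneously raise the genus by one and add a boundary component; and the core of the band is not a new boundary component --- plumbing modifies the boundary components that the plumbing square touches, so $L\subset L_g$ holds only if you arrange the plumbing arcs to meet only the non-$L$ boundary components, which must be stated and justified. As for $lTV$: a gluing map with polynomially bounded norm gives at best an \emph{upper} bound on the invariant of the glued manifold in terms of the pieces, not the exponential \emph{lower} bound you need, so ``positive $lTV$ is preserved'' does not follow from that argument. The correct and much simpler tool --- the one the paper uses --- is the Dehn-filling monotonicity of Theorem \ref{thm:ltvdehnfilling}: whenever $L\subset L'$, the complement of $L$ is a Dehn filling of that of $L'$, hence $lTV(S^3\setminus L')\geqslant lTV(S^3\setminus L)>0$ with no asymptotic estimates at all. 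So the $lTV$ part of your argument should be replaced by ensuring the sublink containment and citing that monotonicity, and the hyperbolicity and genus-growth claims need genuine proofs before the appeal to Theorem \ref{amu-ltv} (together with Thurston's hyperbolization of mapping tori, to know the monodromy is pseudo-Anosov) is available.
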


In   \cite{DK} the authors gave criteria for constructing  3-manifolds, and in particular link complements,  whose $SO(3)$-Turaev-Viro invariants satisfy the condition $lTV>0$.
Starting from these links, and applying 
Theorem \ref{hyperbgeneral}, we obtain fibered links  whose monodromies 
give examples of mapping classes that satisfy Conjecture \ref{AMU}.
However, the construction yields only finitely many mapping classes in the mapping class groups of fixed surfaces.
This is because the links $L'$ obtained by Theorem \ref{hyperbgeneral}
are represented by  closed homogeneous braids  and it is known that there are only finitely many links of  fixed genus and number of components represented that way.
To obtain infinitely many mapping classes for  surfaces of fixed genus and number of boundary components, we need to refine our construction.
We do this
 by using Stallings twists and appealing  to a result of Long and Morton \cite{LongMorton}
on compositions of pseudo-Anosov maps with powers of a Dehn twist. The general process is given in Theorem \ref{infinitegen}. As an application we have the following.

\begin{theorem} \label{general} Let $\Sigma$ denote an orientable surface of genus $g$ and with $n$-boundary components. Suppose that either $n=2$ and $g\geqslant 3$ or $g\geqslant n \geqslant 3.$
Then  there are are infinitely many non-conjugate pseudo-Anosov mapping classes in
$\mathrm{Mod}(\Sigma)$ that satisfy the AMU conjecture.
\end{theorem}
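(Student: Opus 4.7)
I would prove Theorem~\ref{general} as an instance of the general process Theorem~\ref{infinitegen}, combining Theorem~\ref{hyperbgeneral}, the Stallings twist on a fibered link, and the Long--Morton theorem \cite{LongMorton}. Start from a seed link whose complement has $lTV>0$ (produced in \cite{DK}) and feed it into Theorem~\ref{hyperbgeneral} to obtain a hyperbolic fibered link $L'\subset S^3$ with $lTV(S^3\setminus L')>0$ and fiber $\Sigma$ of genus $g$ and $n$ boundary components in the prescribed range. The monodromy $\varphi'\in\mathrm{Mod}(\Sigma)$ is pseudo-Anosov by Thurston's criterion and satisfies the AMU conjecture by Theorem~\ref{amu-ltv}; this single mapping class will serve as the seed from which to build infinitely many non-conjugate ones.

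Next, I would locate on $\Sigma$ a simple closed curve $\alpha$ that is essential and non-peripheral in $\Sigma$, is unknotted in $S^3$, and bounds a disk disjoint from $L'$, so that a Stallings twist along $\alpha$ is available. I would further choose $\alpha$ generically enough (for example, not carried by an invariant sublamination of $\varphi'$) that the Long--Morton theorem applies to the pair $(\varphi',T_\alpha)$. The Stallings twist then yields a family of fibered links $L'_k$, each with the same fiber $\Sigma$ and monodromy $\varphi_k=T_\alpha^k\circ\varphi'$. By Long--Morton, $\varphi_k$ is pseudo-Anosov for all but finitely many $k$, with dilatation tending to infinity; since the dilatation is a conjugacy invariant in $\mathrm{Mod}(\Sigma)$, the classes $[\varphi_k]$ are pairwise distinct for all sufficiently large $|k|$.

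The remaining step is to verify $lTV(S^3\setminus L'_k)>0$ for infinitely many $k$ and then invoke Theorem~\ref{amu-ltv}. The key observation is that $1/k$-surgery on the unknot $\alpha$ recovers $S^3$, so the exterior of $L'_k\cup\alpha$ is homeomorphic to the fixed cusped hyperbolic manifold $E := S^3\setminus(L'\cup\alpha)$, while $S^3\setminus L'_k$ is obtained from $E$ by a Dehn filling of the $\alpha$-cusp whose slope varies with $k$. If $lTV(E)>0$, then a Dehn-filling inequality for $lTV$ of the type used in \cite{DK,DKY} guarantees $lTV(S^3\setminus L'_k)>0$ for all but finitely many $k$. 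The hypotheses $n=2, g\geqslant 3$ or $g\geqslant n\geqslant 3$ should emerge as the smallest cases in which (i) the seed link from \cite{DK}, (ii) a fiber of the prescribed topology coming out of Theorem~\ref{hyperbgeneral}, and (iii) a suitable curve $\alpha$ on $\Sigma$ can be realized simultaneously.

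\textbf{Main obstacle.} The subtle point is securing $lTV(E)=lTV(S^3\setminus(L'\cup\alpha))>0$ and then pushing this positivity through the Dehn filling along $\alpha$. One needs to organize the construction so that $L'\cup\alpha$ either lies directly in the scope of the $lTV$-positivity constructions of \cite{DK} or inherits $lTV>0$ from $L'$ by an explicit inequality controlling the effect of drilling a zero-linking unknot. Arranging $\alpha$ on $\Sigma$ to be simultaneously Long--Morton generic, unknotted in $S^3$, and to fit the Dehn-filling regime where $lTV$ is preserved is where the bulk of the technical work of Theorem~\ref{infinitegen} should reside.
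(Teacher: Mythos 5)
Your overall architecture (produce a hyperbolic fibered link with $lTV>0$, perform Stallings twists, invoke Long--Morton for non-conjugacy, and conclude via Theorem \ref{amu-ltv}) is the same as the paper's, but the step you yourself flag as the main obstacle is handled incorrectly, and it is exactly the point where the paper's argument differs. You propose to get $lTV(S^3\setminus L'_k)>0$ by first showing $lTV(E)>0$ for $E=S^3\setminus(L'\cup\alpha)$ and then applying ``a Dehn-filling inequality'' to the fillings of the $\alpha$-cusp. But the only available inequality (Theorem \ref{thm:ltvdehnfilling}) goes the wrong way: if $M$ is a Dehn filling of $M'$ then $lTV(M)\leqslant lTV(M')$, so positivity of $lTV(E)$ gives only an upper bound on $lTV(S^3\setminus L'_k)$ and says nothing about the filled manifolds; no statement of the form ``$lTV>0$ persists under all but finitely many fillings'' is known (that would be a continuity-under-filling statement close to the open volume conjecture). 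The paper's fix is different and elementary: one insists that the twisting disk meet the distinguished sublink $K$ (the one with $lTV(S^3\setminus K)>0$, e.g.\ $4_1$) at most once geometrically, so that \emph{every} twisted link $L_m$ still contains $K$ as a sublink; then $S^3\setminus K$ is a Dehn filling of $S^3\setminus L_m$ and monotonicity gives $lTV(S^3\setminus L_m)\geqslant lTV(S^3\setminus K)>0$ uniformly in $m$. That is the content of Theorem \ref{infinitegen}, and your proposal never arrives at this mechanism.

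Two further points. First, your requirement that the twisting curve $\alpha$ bound a disk \emph{disjoint from} $L'$ is not the right condition and is in tension with the rest of your argument: if the twisting disk misses the link entirely, the twist does not change the link at all, and $\alpha$ then bounds a disk in the link complement, so the drilled manifold $E$ contains an essential sphere and is not hyperbolic, undermining the Long--Morton/hyperbolic Dehn surgery argument you want to run. The correct condition (as in Theorem \ref{infinitegen} and Remark \ref{locate}) is that the disk may, and generally does, pass through strands of the link, subject only to meeting the sublink $K$ at most once; also, the paper's version of Long--Morton needs no genericity of the curve beyond being essential and non-boundary-parallel. Second, you leave the realizability of the stated $(g,n)$ range to ``the smallest cases in which the construction can be arranged,'' whereas the paper must (and does) produce explicit links $L_{n,m}$ --- alternating closed homogeneous braids containing $4_1$, hyperbolic by Menasco's criterion, fibered by Stallings' criterion, with genus computed to be $m+n-1$ (or $m+2$ when $n=2$) and with a Stallings twist curve located via the state-graph pattern. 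Without such an explicit family (or an equivalent existence argument), the hypotheses $n=2,\ g\geqslant 3$ or $g\geqslant n\geqslant 3$ are not justified; Theorem \ref{hyperbgeneral} alone does not give control of both $g$ and $n$ simultaneously, nor does it supply the twist curve meeting the required intersection condition with $K$.
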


In the last section of the paper we discuss integrality properties of quantum representations for mapping classes of finite order (i.e. periodic mapping classes) and how they reflect on the Turaev-Viro invariants of the corresponding mapping tori. To state our result, we recall that the traces of the representations $\rho_{r,c}$ are known to be algebraic numbers. For periodic mapping classes we have
 the following.
\begin{theorem}\label{thm:integertrace} Let  $f\in \mathrm{Mod}(\Sigma)$ 
be periodic of order $N.$ For any odd integer $r\geqslant 3$, with $\mathrm{gcd}(r, N)=1$,   we have 
$|\Tr \rho_{r,c}(f)| \in \Z,$
 for any $U_r$-coloring $c$ of $\partial \Sigma,$ and any primitive
 $2r$-root of unity.
\end{theorem}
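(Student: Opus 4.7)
The plan is to restrict $\rho_{r,c}$ to the cyclic subgroup $\langle f\rangle\cong\Z/N$ and use the coprimality $\gcd(r,N)=1$ to force the values of its character into $\Z$. First, I pick any matrix lift $B\in\mathrm{GL}(V)$ of $\rho_{r,c}(f)$, with $V=RT_r(\Sigma,c)$. Because $f^N=1$ in $\mathrm{Mod}(\Sigma)$, one has $B^N=\alpha\cdot\mathrm{id}_V$ for some scalar $\alpha$, and the unitarity of $\rho_{r,c}$ forces $|\alpha|=1$. Rescaling $B$ by an $N$-th root of $\alpha^{-1}$---a scalar of modulus one, so that $|\Tr B|$ is preserved---produces a genuine linear representation $\tilde\rho\co\Z/N\to\mathrm{GL}(V)$ satisfying $\tilde\rho(f)^N=\mathrm{id}$, with $|\Tr\tilde\rho(f)|=|\Tr\rho_{r,c}(f)|$. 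The eigenvalues of $\tilde\rho(f)$ are then $N$-th roots of unity, and writing $V=\bigoplus_{j=0}^{N-1}V_j$ with $\tilde\rho(f)$ acting on $V_j$ as $\zeta_N^j$ and $n_j=\dim V_j$, we have $\Tr\tilde\rho(f)=\sum_{j=0}^{N-1} n_j\,\zeta_N^j$.

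The core step is to show that $n_j$ depends only on $\gcd(j,N)$, equivalently that the restricted $\Z/N$-representation is defined over $\QQ$. The coprimality hypothesis enters decisively here: since $r$ is odd and $\gcd(r,N)=1$, one has $\gcd(2r,N)\in\{1,2\}$ and therefore $\QQ(\zeta_{2r})\cap\QQ(\zeta_N)=\QQ$. The matrix $B$ is defined over $\QQ(\zeta_{2r})$, whereas the scalar $\nu=\alpha^{-1/N}$ used to build $\tilde\rho(f)$ lies a priori in $\QQ(\zeta_{2rN})$, so the Galois group $\mathrm{Gal}(\QQ(\zeta_{2rN})/\QQ(\zeta_{2r}))\cong(\Z/N)^*$ naturally permutes the eigenspaces $V_j$. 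A direct cocycle computation shows that the failure of strict equivariance is recorded by a $1$-cocycle $c\co(\Z/N)^*\to\Z/N$ measuring the Kummer class of $\alpha$; because $\alpha$ is itself a root of unity in $\QQ(\zeta_{2r})$ and $\gcd(2r,N)\leq 2$, replacing $\nu$ by $\zeta_N^s\nu$ for a suitable $s$ kills $c$ while leaving $|\Tr\tilde\rho(f)|$ intact, and yields the desired invariance $n_j=n_{aj\bmod N}$ for every $a\in(\Z/N)^*$.

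Granted this invariance, I regroup the character sum by the order $e=N/\gcd(j,N)$ of $\zeta_N^j$, writing $n'_e$ for the common value of $n_j$ on the corresponding $(\Z/N)^*$-orbit. The classical identity $\sum_{k\in(\Z/e)^*}\zeta_e^k=\mu(e)$ (with $\mu$ the M\"obius function) then gives
\[
\Tr\tilde\rho(f) \;=\; \sum_{e\mid N} n'_e\,\mu(e) \;\in\; \Z,
\]
and hence $|\Tr\rho_{r,c}(f)|=|\Tr\tilde\rho(f)|\in\Z$. The hard part of the argument is the middle step: certifying that the Kummer-type cocycle obstruction is trivializable, where the coprimality of $r$ and $N$ provides exactly the cyclotomic room needed to absorb the scalar ambiguity of the lift into a shift of eigenspace indices.
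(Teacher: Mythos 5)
Your overall strategy (fix the lift using $\gcd(r,N)=1$, then force the trace into $\QQ$ by cyclotomic considerations) is the same as the paper's, but your middle step contains a genuine gap: the claim that the Kummer cocycle can always be killed, i.e.\ that some rescaling $\nu\mapsto \zeta_N^s\nu$ yields an honest $\Z/N$-representation whose character is rational. Write $B^N=\alpha\,\mathrm{Id}$ with $\alpha=\zeta_{2r}^k$. A scalar $\nu$ with $\nu^N=\alpha^{-1}$ can be chosen inside $\QQ(\zeta_{2r})$ (equivalently, the cocycle is a coboundary) exactly when $\alpha^{-1}$ is an $N$-th power in the group $\mu_{2r}$ of roots of unity of $\QQ(\zeta_{2r})$. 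When $N$ is odd this is automatic ($N$ is invertible mod $2r$), and your argument then essentially reproduces the paper's. But when $N$ is even and $k$ is odd, the image of the $N$-th power map on $\mu_{2r}$ is $\mu_r$, which does not contain $\zeta_{2r}^{-k}$; every solution $\nu$ then has a nontrivial Kummer class, no shift by $\zeta_N^s$ trivializes it, and in fact no honest lift has rational trace: the paper's normalization $C$ with $C^N=-\mathrm{Id}$ has $\Tr C\in\Z$, and any honest lift is $\lambda C$ with $\lambda^N=-1$, so its trace is $\lambda\,\Tr C\notin\QQ$ whenever $\Tr C\neq 0$. So your intermediate conclusion ``$\Tr\tilde\rho(f)\in\Z$ via the M\"obius identity'' is false in that case, even though the final statement $|\Tr\rho_{r,c}(f)|\in\Z$ remains true. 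Your hypotheses nowhere exclude $N$ even with odd anomaly exponent, so the proof as written does not cover all cases of the theorem.

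The repair is exactly what the paper does: do not insist on an honest $\Z/N$-representation. Since $N$ is invertible mod $r$, one can adjust the lift by a power of $\zeta_{2r}$ so that $\rho_{r,c}(f)^N=\pm\mathrm{Id}$; the eigenvalues are then $2N$-th roots of unity, so the trace lies in $\Z[\zeta_{2N}]$, while by the TQFT (Theorem 2.3 and Theorem 2.1(2)) it also lies in $\QQ[\zeta_{2r}]$. Since $\gcd(2N,2r)=2$, one has $\QQ[\zeta_{2N}]\cap\QQ[\zeta_{2r}]=\QQ$, and an algebraic integer in $\QQ$ is in $\Z$; no rationality of the full character, no M\"obius sum, and no eigenspace-permutation analysis is needed. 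Two smaller points: you invoke ``unitarity'' to get $|\alpha|=1$, but for an arbitrary primitive $2r$-th root of unity the Hermitian form of the $SO(3)$-TQFT need not be positive definite; the correct input (and the one you actually use later) is that the projective ambiguity is a power of $\zeta_{2r}$. Also, your Galois argument needs the matrix $B$ (or at least its characteristic polynomial) to be defined over $\QQ(\zeta_{2r})$, which is a stronger input than the paper uses -- the paper only needs that traces lie in $\QQ[\zeta_{2r}]$, which it gets from the closed-manifold invariants of the mapping torus.
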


As a consequence of  Theorem \ref{thm:integertrace}  we have the following corollary that was  conjectured by Chen and Yang \cite[Conjecture 5.1]{Chen-Yang}.
\begin{corollary}\label{cor:toruslinks} For integers $p,q$ let $T_{p,q}$ denote the $(p,q)$-torus link. Then, for any odd $r$ coprime with $p$ and $q$,
we have $TV_r(S^3\setminus T_{p,q})\in \Z$.
\end{corollary}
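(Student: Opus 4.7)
The plan is to identify $M := S^3 \setminus T_{p,q}$ as the mapping torus of a periodic mapping class and then combine Theorem \ref{thm:integertrace} with the standard mapping-torus formula for $TV_r$ that underlies the proof of Theorem \ref{amu-ltv}. Classically, the $(p,q)$-torus link is fibered: its complement $M$ fibers over $S^1$ with fiber a Seifert surface $\Sigma$ and periodic monodromy $f \in \mathrm{Mod}(\Sigma)$ whose order $N$ divides $pq$. (The Seifert-fibered structure of $M$, with base orbifold a disk carrying two cone points of orders dividing $p$ and $q$, forces the monodromy of the fibration over $S^1$ to be periodic of such order.) The hypothesis $\gcd(r,p) = \gcd(r,q) = 1$ thus gives $\gcd(r,N) = 1$, so Theorem \ref{thm:integertrace} applies and yields $|\Tr \rho_{r,c}(f)| \in \Z$ for every $U_r$-coloring $c$ of $\partial \Sigma$.

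The second ingredient is the TQFT mapping-torus formula
$$TV_r(M_f) \;=\; \sum_{c} |\Tr \rho_{r,c}(f)|^2,$$
the sum ranging over the finite set of $U_r$-colorings of $\partial \Sigma$. This identity is a direct consequence of the $SO(3)$-TQFT of \cite{BHMV2} applied to the fibered decomposition of $M_f$ (morally, it is the relation $TV_r = |RT_r|^2$ evaluated on a mapping torus, with the sum over $c$ coming from the basis of the TQFT state space attached to $\partial \Sigma$), and is precisely the mechanism underlying Theorem \ref{amu-ltv}. Substituting our integrality statement expresses $TV_r(M)$ as a finite sum of squares of non-negative integers, which is itself a non-negative integer.

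The one point that requires care, though it is classical rather than genuinely difficult, is pinning down that the monodromy of the fibration of $M$ is periodic with order dividing $pq$; this can be read off from the Seifert-fibered structure of the torus-link complement (or, when $\gcd(p,q) = 1$, from the explicit Milnor-fiber description, where the monodromy has order exactly $pq$, and then by a cyclic-cover argument in the general case). The other small matter is verifying that the root-of-unity normalizations on the $TV_r$ side ($q = e^{2i\pi/r}$) and on the quantum-representation side ($\zeta_{2r} = e^{i\pi/r}$) match in the mapping-torus formula; with the conventions fixed in the introduction this is automatic.
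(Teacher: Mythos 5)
Your proposal is correct and follows essentially the same route as the paper: the authors also identify $S^3\setminus T_{p,q}$ as the mapping torus of a periodic monodromy of order $pq$ (citing the Seifert-fibered structure), apply Theorem \ref{thm:integertrace}, and conclude via the formula $TV_r(M_f)=\sum_c|\Tr\rho_{r,c}(f)|^2$ from the proof of Theorem \ref{amu-ltv}; the paper merely packages this as the intermediate Corollary \ref{integerTV}.
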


The paper is organized as follows:  In Section \ref{sec:TV}, we summarize results from the $SO(3)$-Witten-Reshetikhin-Turaev TQFT and their relation to Turaev-Viro invariants that we need in this paper.
In Section  \ref{sec:lTV>0}, we discuss how to construct families of links whose  $SO(3)$-Turaev-Viro invariants have exponential growth (i.e. $lTV>0$) and then we prove Theorem \ref{amu-ltv}
that explains how this exponential growth relates to the AMU Conjecture.
In Section \ref{sec:homogenize}, we describe a method to get hyperbolic fibered links with any given sublink and we prove 
Theorem \ref{hyperbgeneral}.
In Section \ref{sec:examples}, we explain how to  refine the construction of Section \ref{sec:homogenize} to get infinite families of mapping classes on fixed genus surfaces
that satisfy the AMU Conjecture (see Theorem \ref{general}). We also provide an explicit construction that leads to Theorem \ref{general}.

Finally in Section \ref{sec:integertraces}, we discuss periodic mapping classes and we  prove Theorem \ref{thm:integertrace} and Corollary \ref{cor:toruslinks}. We also state a non-integrality conjecture about Turaev-Viro invariants of hyperbolic mapping tori.

\subsection{Acknowledgement} We would like to thank Matthew Hedden, Julien March\'e, Gregor Masbaum, Ramanujan Santharoubane and Tian Yang for helpful discussions. In particular, we thank Tian Yang for bringing to our attention the connection between the volume conjecture of \cite{Chen-Yang} and  the AMU conjecture. We also thank Jorgen Andersen and Alan Reid for their interest in this work. 

\section{TQFT properties and quantum representations} 
\label{sec:TV}

In this section, we summarize some properties of the $SO(3)$-Witten-Reshetikhin-Turaev TQFTs, which we introduce in the skein-theoretic framework of \cite{BHMV2}, and briefly discuss their relation
to the $SO(3)$-Turaev-Viro invariants.

\subsection{Witten-Reshetikhin-Turaev $SO(3)$-TQFTs} Given an
odd  $r\geqslant 3,$  let $U_r$  denote the set of even integers less than $r-2.$ A banded link in a manifold $M$ is an embedding of a disjoint union of annuli $S^1\times[0,1]$ in $M,$ and a $U_r$-colored banded link  $(L,c)$ is a banded link whose components are colored by elements of $U_r.$  
For a closed, oriented 3-manifold $M,$ the Reshetikhin-Turaev
invariants $RT_r(M)$ are complex valued topological invariants.  They also extend to invariants $RT_r(M,(L,c))$ of manifolds containing colored banded links.
These invariants are part of a compatible set of invariants of compact  surfaces and compact 3-manifolds,  which is called a TQFT.
Below we summarize  the main properties of the theory that will be useful to us in this paper, referring the reader to \cite{BHMV, BHMV2} for the  precise definitions and details.

\begin{theorem}\label{thm:TQFTdef}{\rm{ (\cite[Theorem 1.4]{BHMV2})}} For any odd integer $r \geqslant 3$ and any  primitive $2r$-th root of unity $\zeta_{2r},$ there is a TQFT functor $RT_r$ with the following properties:
\begin{itemize}
\item[(1)]  For $\Sigma$ a compact oriented surface, and if  $\partial \Sigma\neq \emptyset$
a coloring $c$ of $\partial \Sigma$ by elements of $U_r$, there is a finite dimensional $\C$-vector space $RT_r(\Sigma,c),$ with a Hermitian form $\langle , \rangle.$
Moreover for disjoint unions, we have $$RT_r(\Sigma \coprod \Sigma')=RT_r(\Sigma)\otimes RT_r(\Sigma').$$
\item[(2)]For $M$ a closed compact oriented $3$-manifold, containing a $U_r$-colored banded link $(L,c),$  the value $RT_r(M,(L,c),\zeta_{2r}) \in \QQ[\zeta_{2r}]\subset \C$ is the $SO(3)$-Reshetikhin-Turaev invariant at level $r.$
\vskip 0.06in

\item[(3)] For $M$ a compact oriented $3$-manifold with $\partial M=\Sigma$, and $(L,c)$ a $U_r$-colored banded link in $M,$ the invariant $RT_r(M,(L,c))$ is a vector in $RT_r(\Sigma).$
 Moreover, for compact oriented 3-manifolds $M_1$, $M_2$ with $\partial M_1=-\partial M_2=\Sigma,$ we have  $$RT_r(M_1\underset{\Sigma}{\cup}M_2)=\langle RT_r(M_1),RT_r(M_2)\rangle.$$
Finally, for disjoint unions $M=M_1\coprod M_2,$ we have
$$RT_r(M)=RT_r(M_1)\otimes RT_r(M_2).$$

\item[(4) ]For a cobordism $M$ with $\partial M= -\Sigma_0 \cup \Sigma_1,$  there is a map
 $$RT_r(M) \in \mathrm{End}(RT_r(\Sigma_0),RT_r(\Sigma_1)).$$
\item[(5)]The composition of cobordisms is sent by $RT_r$ to the composition of linear maps, up to a power of $\zeta_{2r}.$
\end{itemize}
\end{theorem}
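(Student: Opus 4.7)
The statement is the foundational theorem of \cite{BHMV2} constructing the $SO(3)$ TQFT functor at odd level $r$, so my plan is to follow the skein-theoretic construction of BHMV and verify each of the five axioms in turn.

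First I would fix $A$ with $A^2=\zeta_{2r}$ and define the vector spaces $RT_r(\Sigma,c)$. For a compact oriented surface $\Sigma$ with boundary colored by $c$ using elements of $U_r$, I would choose a handlebody $H$ with $\partial H=\Sigma$ and set $RT_r(\Sigma,c)$ to be the Kauffman bracket skein module $K_A(H)$ (with boundary strands colored via Jones--Wenzl idempotents according to $c$), modulo the kernel of the Hermitian pairing obtained by doubling $H$ along $\Sigma$. Independence from the choice of handlebody follows because any two such handlebodies differ by stabilizations, which are absorbed by the Kirby color at the level $r$; disjoint union yields tensor products of skein modules. This gives axiom (1).

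Next, for a closed oriented 3-manifold $M$ containing a $U_r$-colored banded link $(L,c)$, I would present $M$ by integer surgery on a framed link $L'\subset S^3$ and color each component of $L'$ by the Kirby color $\omega_r=\sum_{i\in U_r}[i+1]\,e_i$. I would define $RT_r(M,(L,c))$ as the Kauffman bracket of the resulting colored skein in $S^3$, normalized by a suitable power of $\eta$ and by a signature correction depending on the linking matrix of $L'$. Invariance under Kirby's stabilization and handle-slide moves follows from the defining identities of $\omega_r$ at an odd root of unity, giving (2). For $M$ with boundary, I view $M$ as a cobordism from $\emptyset$ to $\partial M$ and represent it via surgery on a link sitting inside a handlebody for $\partial M$: the resulting colored skein defines a vector in $RT_r(\partial M)$, which yields (3), and functoriality in the cobordism sense yields (4). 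The gluing formula reduces to checking that sewing two cobordisms corresponds to the Hermitian pairing on $RT_r(\Sigma)$, which follows from compatibility of the surgery presentations across $\Sigma$ and the non-degeneracy of the pairing.

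The main technical obstacle is the projective anomaly in axiom (5): composition of cobordisms is preserved only up to a power of $\zeta_{2r}$, arising from the fact that the signature correction in the surgery formula is not additive under gluing, by Wall's non-additivity of the signature. I would resolve this in the standard way by passing to the category of $p_1$-structured cobordisms as in \cite{BHMV2}, where the anomaly vanishes and $RT_r$ becomes strictly functorial; forgetting the $p_1$-structure reintroduces the root-of-unity factor. Since the statement is essentially \cite[Theorem 1.4]{BHMV2}, the real role of the ``proof'' here is to fix conventions: the only paper-specific choice is $\zeta_{2r}=e^{i\pi/r}$, which pins down the normalizations needed to invoke the results of \cite{DK,DKY} later in the paper.
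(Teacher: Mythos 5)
This statement is not proved in the paper at all: it is quoted verbatim as \cite[Theorem 1.4]{BHMV2}, and the paper simply defers to that reference for the construction. Your sketch reproduces exactly the BHMV2 route (skein modules of handlebodies with Jones--Wenzl colored boundary, the universal-construction pairing, the Kirby color surgery formula with signature normalization, and $p_1$-structures to control the framing anomaly), so it is correct and follows essentially the same approach the paper relies on.
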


In  \cite{BHMV2} the authors construct some explicit orthogonal  basis $E_r$ for  $RT_r(\Sigma,c)$:  Let $\Sigma$ be a compact, oriented surface that is not the 2-torus or the 2-sphere with less than four
holes. Let  $P$ be a collection of simple closed curves  on $\Sigma$ that contains the boundary $\partial \Sigma$
and gives a pants decomposition of $\Sigma.$ The elements of $E_r$ are in one-to-one correspondence with colorings ${\hat{c}}: P \longrightarrow U_r$, such that ${\hat{c}}$ agrees with $c$ on $\partial \Sigma$
and for each pant the colors of the three boundary components satisfy certain admissibility conditions. We will not make use of the general construction. What we need is the following:

\begin{theorem}\label{thm:TQFTbasis} {\rm{(\cite[Theorem 4.11, Corollary 4.10]{BHMV2}) }} 
\\
\item[(1)] For $\Sigma$ a compact, oriented  surface, with genus $g$ and $n$ boundary components, such that $(g,n)\neq (1,0),(0,0),(0,1),(0,2),(0,3),$ we have
$$\mathrm{dim}(RT_r(\Sigma,c))\leqslant r^{3g-3+n}.$$

\item[(2)] If $\Sigma=T$ is the 2-torus we actually have an orthonormal basis for
$RT_r(T).$  It consists of  the elements $e_0,e_2,\ldots,e_{r-3},$ where 
$$e_i=RT_r(D^2\times S^1, ([0,\frac{1}{2}]\times S^1, i))$$
is the Reshetikhin-Turaev vector of the solid torus with the core viewed as banded link and colored by $i.$
\end{theorem}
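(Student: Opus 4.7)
The plan is to derive both parts of the statement from the standard basis construction of \cite{BHMV2} attached to a decomposition of $\Sigma$ into pairs of pants.

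For part (1), I would fix a pants decomposition $P$ of $\Sigma$ whose curves include $\partial \Sigma$. A computation via Euler characteristic shows that $\Sigma$ splits into $2g - 2 + n$ pairs of pants glued along $3g - 3 + n$ internal simple closed curves, provided $(g,n)$ avoids the listed exceptional cases (where such a decomposition either does not exist or is degenerate). The basis $E_r$ of $RT_r(\Sigma, c)$ is indexed by admissible colorings $\hat{c}\colon P \to U_r$ extending $c$ on $\partial \Sigma$. Dropping the admissibility constraint at each pair of pants only enlarges the index set, so
$$\dim RT_r(\Sigma, c) \leq |U_r|^{3g-3+n} = \left(\frac{r-1}{2}\right)^{3g-3+n} \leq r^{3g-3+n},$$
which gives the desired inequality.

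For part (2), the torus $T$ is exceptional since $3g - 3 + n = 0$, so I would use the alternative cut: realize $T$ as $\partial(D^2 \times S^1)$ along a meridian. The gluing axiom (Theorem \ref{thm:TQFTdef}(3)) shows that the vectors
$$e_i := RT_r\bigl(D^2 \times S^1, ([0, 1/2] \times S^1, i)\bigr), \qquad i \in U_r,$$
form a spanning family for $RT_r(T)$. Pairing them using the same axiom gives
$$\langle e_i, e_j \rangle = RT_r\bigl(S^2 \times S^1, L_{ij}\bigr),$$
where $L_{ij}$ is a banded two-component link in $S^2 \times S^1$ consisting of parallel copies of $\{*\}\times S^1$ colored by $i$ and $j$. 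A surgery presentation of $S^2 \times S^1$ by the $0$-framed unknot in $S^3$, combined with standard Kauffman bracket skein computations, reduces this pairing to one in the skein module of the solid torus and evaluates it to $\delta_{ij}$; hence the $e_i$ are orthonormal and, in particular, form a basis.

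The main obstacle will be the orthonormality computation in part (2): one must track the precise $SO(3)$-normalization of the Kirby color and the quantum integer denominators, which is where the restriction to odd $r$ and the choice $\zeta_{2r} = e^{i\pi/r}$ genuinely enter; in contrast, part (1) is a combinatorial upper bound that follows almost immediately once the pants basis is in hand.
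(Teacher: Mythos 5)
First, note that the paper does not prove this statement: it is quoted verbatim from \cite[Theorem 4.11, Corollary 4.10]{BHMV2}, so the only fair comparison is with the argument in that source. Your part (1) is exactly the standard argument and is fine: a pants decomposition of $\Sigma_{g,n}$ (excluding the listed low-complexity cases) has $2g-2+n$ pants and $3g-3+n$ interior curves, the $E_r$ basis is indexed by admissible $U_r$-colorings of $P$ extending $c$, and forgetting admissibility gives $\dim RT_r(\Sigma,c)\leqslant |U_r|^{3g-3+n}=\left(\frac{r-1}{2}\right)^{3g-3+n}\leqslant r^{3g-3+n}$.

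In part (2) there is one genuine gap: the claim that the gluing axiom (Theorem \ref{thm:TQFTdef}(3)) shows the $e_i$, $i\in U_r$, \emph{span} $RT_r(T)$. Spanning is not a formal consequence of the axioms listed in Theorem \ref{thm:TQFTdef}; it is where the actual construction of \cite{BHMV2} enters. There, $RT_r(T)$ is defined by the universal construction, so it is spanned by vectors $RT_r(N,(L,c))$ over \emph{all} $3$-manifolds $N$ with $\partial N=T$ containing colored banded links; one then uses the surgery axioms to reduce to the solid torus, the fact that the Kauffman bracket skein module of $D^2\times S^1$ is spanned by powers of the core, and the vanishing/recursion of the Jones--Wenzl idempotents at level $r$ to cut the colors down to $U_r$. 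Without some such input, orthonormality alone only gives linear independence, not a basis. Your orthonormality computation, on the other hand, is correct in outline; it can even be done without delicate normalization bookkeeping by using the trace property: $\langle e_i,e_j\rangle = RT_r(S^2\times S^1, L_{ij})=\dim RT_r(S^2,(i,j))=\delta_{ij}$, since the sphere with two marked points has a nonzero (one-dimensional) space only when the colors agree. Your caution about the $SO(3)$ normalization at odd $r$ is well placed, though --- orthonormality of this basis is special to the odd-$r$ $SO(3)$ theory and is precisely the content of \cite[Theorem 4.11, Corollary 4.10]{BHMV2} being invoked.
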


\subsection{$SO(3)$-quantum representations of the mapping class groups} 
For any odd integer $r \geqslant 3,$ any choice of a primitive $2r$-root of unity $\zeta_{2r}$ and a coloring $c$ of the boundary components of $\Sigma$ by elements of $U_r,$ we have a finite dimensional projective representation,
$$\rho_{r,c} : \mathrm{Mod}(\Sigma) \rightarrow \mathbb{P}\mathrm{Aut}(RT_r(\Sigma,c)).$$
If $\Sigma$ is a closed surface and $f \in \mathrm{Mod}(\Sigma),$ we simply have $\rho_r(f)=RT_r(C_{\phi}),$ where the cobordism $C_{\phi}$ is the mapping cylinder of $f:$
$$C_{f}=\Sigma\times [0,1]\underset{(1,x)\sim f(x)}{\coprod} \Sigma.$$ 
The fact that this gives a projective representation of $\mathrm{Mod}(\Sigma)$ is a consequence of points (4) and (5) of Theorem \ref{thm:TQFTdef}.

For $\Sigma$ with non-empty boundary, giving the precise definition of  the quantum representations would require us to discuss the functor $RT_r$ for cobordisms containing colored tangles (see \cite{BHMV2}). 
 Since in this paper we will only be interested in the traces of the quantum representations, we will not recall the definition of the quantum representations in its full generality. We will use the following theorem:

\begin{theorem}\label{thm:tracequantumrep}
For $r\geqslant 3$ odd, let   $\Sigma$ be a  compact oriented  surface  with $c$  a $U_r$-coloring on the components of $\partial \Sigma.$ Let $\tilde{\Sigma}$ be the surface obtained from $\Sigma$ by capping the components  of $\partial \Sigma$ with disks. For $f\in \mathrm{Mod}(\Sigma),$  let $\tilde{f}\in \mathrm{Mod}(\tilde{\Sigma)},$ denote the mapping class of the extension of $f$ on the capping disks  by the identity.
Let  $M_{\tilde {f}}=F \times [0,1]/_{(x,1)\sim ({\tilde {f}}(x),0)}$
 be the mapping torus of $\tilde{f}$ and let $L \subset M_{\tilde {f}}$ denote the link whose components consist of the cores of the solid tori in $M_{\tilde {f}}$ over the capping disks.
Then, we have
$$\Tr (\rho_{r,c}(f))=RT_r(M_{\tilde {f}} ,(L,c)).$$
\end{theorem}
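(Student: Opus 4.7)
The plan is to invoke the standard TQFT trace formula: if $M$ is a cobordism from $\Sigma$ to itself, then the trace of the endomorphism $RT_r(M)$ of $RT_r(\Sigma)$ equals $RT_r$ of the closed $3$-manifold obtained by gluing the two $\Sigma$-boundary copies of $M$ together. I would apply this to the mapping cylinder of a suitable extension of $f$.

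The first step is to pass from a surface with colored boundary $(\Sigma,c)$ to a closed surface with colored banded points. In the BHMV framework there is a canonical identification $RT_r(\Sigma,c) \cong RT_r(\tilde{\Sigma},P)$, where $P$ consists of one banded point at the center of each disk used to cap $\partial\Sigma$, colored by the corresponding element of $c$. Under this identification, $\rho_{r,c}(f)$ corresponds to the operator obtained by applying $RT_r$ to the mapping cylinder $C_{\tilde f}\co \tilde\Sigma \to \tilde\Sigma$ of the extension $\tilde f$, equipped with the colored banded tangle $T = P\times[0,1]$ swept out by the points. Properties (4) and (5) of Theorem~\ref{thm:TQFTdef} guarantee that this defines an endomorphism of $RT_r(\tilde\Sigma,P)$ in a functorial way.

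The second step is to compute the trace using the gluing axiom (3). Choose an orthonormal basis $\{e_i\}$ of $RT_r(\tilde\Sigma,P)$, realized as $e_i = RT_r(H_i, T_i)$ for 3-manifolds $H_i$ with $\partial H_i = \tilde \Sigma$ containing appropriate colored banded tangles whose endpoints lie in $P$. Then
$$\Tr (\rho_{r,c}(\tilde f)) = \sum_i \langle e_i,\ RT_r(C_{\tilde f},T)\,e_i\rangle,$$
and axiom (3) identifies each summand with the RT invariant of the closed 3-manifold obtained by gluing $H_i$, $(C_{\tilde f},T)$, and $-H_i$ along their $\tilde\Sigma$-boundaries. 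Summing over $i$ and using the completeness of the basis (equivalently, the fact that the identity endomorphism corresponds to the cylinder cobordism), this collapses to $RT_r$ of the manifold obtained from $C_{\tilde f}$ by directly identifying its two $\tilde\Sigma$-boundaries. That glued-up manifold is by definition $M_{\tilde f}$, and the tangle $T$ closes up into banded circles that are precisely the cores $L$ of the solid tori over the capping disks, still colored by $c$. Hence $\Tr(\rho_{r,c}(f))=RT_r(M_{\tilde f},(L,c))$.

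The main technical obstacle is carrying out the trace-from-gluing step rigorously: one must verify that the basis-sum really consolidates into the correct single RT invariant and that the projective scalars from axiom (5) cancel out (as they do in any trace of a composition where a cobordism is glued to itself in this symmetric way). A secondary point is the careful bookkeeping showing that the banded colored tangle in $C_{\tilde f}$ is exactly the product tangle $P\times[0,1]$, so that after the quotient $(x,1)\sim (\tilde f(x),0)$ it yields the banded link $L$ with the prescribed coloring $c$; this uses that $\tilde f$ is the identity on the capping disks, hence fixes the banded points pointwise.
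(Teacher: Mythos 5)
The paper offers no proof of this theorem at all: it is stated as a fact imported from the BHMV machinery (the authors explicitly decline to set up $RT_r$ for cobordisms with colored tangles and just record the trace formula they need), so there is no "paper proof" to compare against. Your argument is the standard one that the paper is implicitly relying on, and it is essentially correct: identify $RT_r(\Sigma,c)$ with $RT_r(\tilde{\Sigma},P)$ for colored banded points $P$, realize $\rho_{r,c}(f)$ as $RT_r$ of the mapping cylinder $C_{\tilde{f}}$ containing the vertical tangle $P\times[0,1]$, and then use the TQFT trace property to convert the trace into the invariant of the self-glued manifold $M_{\tilde{f}}$ containing the closed-up tangle, which is exactly $(L,c)$ since $\tilde{f}$ is the identity on the capping disks.

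Two points deserve more care than your sketch gives them. First, the self-gluing ("trace") property is not literally one of the listed axioms (1)--(5); the clean derivation is not to sum the closed-manifold invariants $RT_r(\overline{H_i}\cup C_{\tilde{f}}\cup H_i)$ and argue they "collapse," but to decompose $M_{\tilde{f}}$ as $C_{\tilde{f}}$ glued to a cylinder $\tilde{\Sigma}\times[0,1]$ along $\tilde{\Sigma}\sqcup(-\tilde{\Sigma})$, use the disjoint-union axiom to view $RT_r(\tilde{\Sigma}\times[0,1])$ as the element of $RT_r(\tilde{\Sigma})\otimes RT_r(-\tilde{\Sigma})$ representing the identity endomorphism, and then apply the pairing axiom once; this yields $\sum_i\langle RT_r(C_{\tilde{f}},T)e_i,e_i\rangle=\Tr(\rho_{r,c}(f))$ directly. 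Second, your claim that the anomaly scalars from axiom (5) "cancel out" in this symmetric gluing is not automatic; with the $p_1$-structure conventions of BHMV the equality holds on the nose for a suitable lift, but since $\rho_{r,c}$ is only projective the identity is in any case only meaningful up to a root-of-unity phase, and the paper only ever uses $|\Tr\rho_{r,c}(f)|$, so this ambiguity is harmless for every application in the paper.
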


\subsection{SO(3)-Turaev-Viro invariants}
In \cite{TuraevViro}, Turaev and Viro introduced invariants of compact oriented $3$-manifolds as state sums on triangulations of 3-manifolds.
The triangulations are colored by representations of a semi-simple quantum Lie algebra. In this paper, we are only concerned with the $SO(3)$-theory: Given a compact 3-manifold $M$, an odd integer $r\geqslant 3,$
and a primitive $2r$-root of unity, 
there is an $\RR$-valued invariant $TV_r.$ We refer to \cite{DKY} for the precise flavor of Turaev-Viro invariants we are using here, and to \cite{TuraevViro} and for the original definitions and proofs of invariance.
We will make use the following theorem, which relates the Turaev-Viro invariants $TV_r(M)$ of a $3$-manifold $M$ with the Witten-Reshetikhin-Turaev TQFT $RT_r.$  For closed 3-manifolds it was proved by
Roberts \cite{Roberts}, and was extended to manifolds with boundary by Benedetti and Petronio \cite{BePe}. In fact, as Benedetti and Petronio formulated their theorem in the case of $\mathrm{SU}_2$-TQFT, the adaptation of the proof in  the setting of $SO(3)$-TQFT  we use here can be found in \cite{DKY}.
\begin{theorem}{\rm{ (\cite[Theorem 3.2]{BePe})}}\label{thm:BePe}
For $M$ an oriented compact $3$-manifold with empty or toroidal boundary and $r\geqslant 3$ an odd integer, we have:
$$TV_r(M,q=e^{\frac{2i\pi}{r}})=||RT_r(M,\zeta=e^{\frac{i\pi}{r}})||^2.$$
\end{theorem}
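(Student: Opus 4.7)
The plan is to reduce the statement to the closed case (Roberts' theorem) via the standard doubling trick. Given $M$ with boundary, let $DM = M \cup_{\partial M} (-M)$ denote its double. My first step would be to argue, directly from a state-sum description of $TV_r$ on a triangulation $\tau$ of $M$ that extends by its mirror image to a triangulation of $DM$, that
\[
TV_r(M, q) = RT_r(DM, \zeta).
\]
At the level of state sums, the contributions from $M$ and $-M$ share colorings along $\partial M$; after summing over internal colors one recovers precisely the state-sum expression of $TV_r(DM)$ which, by Roberts' theorem for closed manifolds, equals $|RT_r(DM)|$ (and here $RT_r(DM)$ is already real and non-negative, so there is no absolute value issue). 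An alternative route is to use Turaev's chain-mail formulation of $TV_r$ and then resolve the chain-mail link in $DM$ via the Kirby calculus identities satisfied by the Kauffman bracket skein element $\omega_r$.

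Once the identity $TV_r(M) = RT_r(DM)$ is in hand, I would apply the TQFT gluing property of Theorem \ref{thm:TQFTdef}(3) to the decomposition $DM = M \cup_{\partial M} (-M)$, obtaining
\[
RT_r(DM) = \langle RT_r(M),\, RT_r(-M)\rangle,
\]
where both vectors lie in $RT_r(\partial M)$.

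The next step is to identify $RT_r(-M)$ with the Hermitian conjugate of $RT_r(M)$. This is a standard reality/orientation-reversal property of the $SO(3)$-TQFT built from $\zeta_{2r} = e^{i\pi/r}$: orientation reversal of a cobordism complex-conjugates its Reshetikhin-Turaev vector, so $RT_r(-M) = \overline{RT_r(M)}$ in $RT_r(\partial M)$ equipped with the Hermitian form $\langle\,,\,\rangle$ of Theorem \ref{thm:TQFTdef}(1). Combining these identifications gives
\[
TV_r(M) = \langle RT_r(M), \overline{RT_r(M)}\rangle = \|RT_r(M)\|^2,
\]
which is the claimed identity.

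The main obstacle I expect is Step 1, the identification $TV_r(M) = RT_r(DM)$ at the level of state sums in the bordered setting, since one must check that the boundary contributions are handled consistently with the chosen normalizations of $TV_r$ and $RT_r$ (this is precisely where \cite{BePe} does the work, originally in the $SU_2$ setting, and where \cite{DKY} adapts the argument to the $SO(3)$ normalization used here). The orientation-reversal identity $RT_r(-M) = \overline{RT_r(M)}$, while conceptually clean, also needs care in the $SO(3)$ setting because the framing anomaly and the choice of primitive $2r$-th root of unity must be tracked to ensure complex conjugation is genuinely realized; for $\zeta_{2r} = e^{i\pi/r}$ this works cleanly, which is one reason why the choice of root of unity matters throughout the paper.
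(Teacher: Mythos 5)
The paper does not actually prove this statement: it is imported wholesale from \cite{BePe} (Roberts \cite{Roberts} for the closed case, with the $SO(3)$ adaptation in \cite{DKY}), so there is no internal argument to compare yours with. The formal TQFT part of your plan is fine and standard: once one knows $TV_r(M)=RT_r(DM)$, the gluing axiom of Theorem \ref{thm:TQFTdef}(3) together with the identification of $RT_r(-M)$ with the conjugate of $RT_r(M)$ gives $RT_r(DM)=\langle RT_r(M),RT_r(M)\rangle=\|RT_r(M)\|^2$, which is the claim.

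The genuine gap is in your Step 1, which is where the entire content of the theorem sits, and as written the justification is wrong on two counts that happen to cancel. First, sharing boundary colorings between the state sums of $M$ and $-M$ and summing internal colors is a description of the state sum of $TV_r(DM)$, not of $TV_r(M)$; these are different numbers. Indeed, granting the statement being proved, $TV_r(DM)=|RT_r(DM)|^2=\|RT_r(M)\|^4=TV_r(M)^2$, and concretely for $M=B^3$ one has $TV_r(B^3)=\|RT_r(B^3)\|^2=RT_r(S^3)$ while $TV_r(DB^3)=TV_r(S^3)=|RT_r(S^3)|^2$. Second, Roberts' theorem for a closed manifold $N$ is $TV_r(N)=|RT_r(N)|^2$, not $|RT_r(N)|$. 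The two missing squares compensate, so the intermediate identity $TV_r(M)=RT_r(DM)$ you aim for is in fact true, but your sketch does not establish it. Moreover, even the corrected doubling identity cannot be obtained by ``summing internal colors'' on a doubled triangulation: comparing the Turaev--Viro state sum with the surgery-presented Reshetikhin--Turaev invariant is exactly the chain-mail/handle-decomposition argument of Roberts, extended to the bordered and $SO(3)$ settings in \cite{BePe,DKY}, and you ultimately defer to those references for it. The honest form of your proposal is therefore a citation of \cite{BePe,DKY} (which is precisely what the paper does), with the triangulation-doubling sketch either removed or replaced by the actual chain-mail argument; as it stands, that sketch would not survive scrutiny.
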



\section{Growth of Turaev-Viro invariants and the AMU conjecture}

In this section,  first we explain how the growth of the  $SO(3)$-Turaev-Viro invariants is related to the AMU conjecture.
Then we give examples of link complements $M$  for which the $SO(3)$-Turaev-Viro invariants have exponential growth with respect to $r$; that is, we have $lTV(M)>0.$

\subsection{Exponential growth implies the AMU conjecture}
Let $\Sigma$ denote a compact  orientable surface with or without boundary and, as before, let $\mathrm{Mod}(\Sigma)$  denote the mapping class group of $\Sigma$ fixing the boundary.

\begin{named}{Theorem  \ref{amu-ltv}}   Let $f \in \mathrm{Mod}(\Sigma)$ be a  pseudo-Anosov  mapping class and 
let $M_{f}$ be the mapping torus of $f$. If $lTV(M_{f})>0,$ then $f$ satisfies the conclusion of the AMU conjecture.
\end{named}
\label{sec:amu-ltv}
The proof of Theorem \ref{amu-ltv} relies on the following elementary lemma:
\begin{lemma}\label{lem:order} If $A \in \mathrm{GL}_n (\mathbb{C})$ is such that $|\mathrm{Tr}(A)|>n,$ then $A$ has infinite order.
\end{lemma}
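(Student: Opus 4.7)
The plan is a direct contradiction argument via diagonalization. Suppose $A \in \mathrm{GL}_n(\mathbb{C})$ has finite order, say $A^k = I$ for some positive integer $k$. Then the minimal polynomial of $A$ divides $X^k - 1$, which splits into distinct linear factors over $\mathbb{C}$ since its roots are the $k$-th roots of unity. Hence $A$ is diagonalizable, and each eigenvalue $\lambda_1, \ldots, \lambda_n$ of $A$ (listed with multiplicity) is a $k$-th root of unity, so $|\lambda_i| = 1$ for every $i$.

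The triangle inequality then yields
\[
|\mathrm{Tr}(A)| = \Bigl|\sum_{i=1}^n \lambda_i\Bigr| \leqslant \sum_{i=1}^n |\lambda_i| = n,
\]
which contradicts the hypothesis $|\mathrm{Tr}(A)| > n$. Therefore $A$ cannot have finite order.

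There is no real obstacle here: the only point that one might wish to record carefully is the standard fact that a matrix over $\mathbb{C}$ annihilated by a polynomial with distinct roots is diagonalizable, which is what lets us pass from the relation $A^k = I$ to the eigenvalue bound $|\lambda_i| = 1$. Once this is in hand, the proof is a one-line application of the triangle inequality.
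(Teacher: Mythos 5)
Your proof is correct and is essentially the paper's argument run in the contrapositive direction: the paper triangularizes $A$, notes that $|\mathrm{Tr}(A)|>n$ forces some eigenvalue to have modulus greater than $1$, and concludes infinite order, while you assume finite order, diagonalize via $A^k=I$, and bound $|\mathrm{Tr}(A)|\leqslant n$ by the triangle inequality. Both hinge on the same eigenvalue-modulus observation, so no further comment is needed.
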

\begin{proof}
Up to conjugation we can assume that $A$
is upper triangular. If the sum of the $n$ diagonal entries has modulus bigger than $n,$ one of these entries must have modulus bigger that $1$. This implies that
$A$ has infinite order.
\end{proof}

\vskip 0.06in
\begin{proof}[Proof of Theorem \ref{amu-ltv}]  Suppose that for the mapping torus  $M_{f}$ of some $f \in \mathrm{Mod}(\Sigma)$, we  have $lTV(M_{f})>0.$ We will prove Theorem \ref{amu-ltv} by relating $TV_r(M_{f})$ to traces of the quantum representations of $\mathrm{Mod}(\Sigma)$.
By Theorem \ref{thm:BePe}, we have $$TV_r(M_{f})=||RT_r(M_{f})||^2= \langle RT_r(M_{f}), \  RT_r(M_{f})\rangle,$$ 
where, with the notation of Theorem  \ref{thm:TQFTdef},  $ \langle , \rangle$ is the   Hermitian form on  $RT_r(\Sigma,c).$ 

Suppose that $\Sigma$ has genus $g$ and $n$ boundary components.
Now $\partial M_f$ is a disjoint union of $n$  tori. Note that by Theorem \ref{thm:TQFTbasis}-(2) and Theorem \ref{thm:TQFTdef}-(1), $RT_r(\partial M_f)$ admits an orthonormal basis given by vectors
$${\bf e}_{c}=e_{c_1}\otimes e_{c_2} \otimes \ldots e_{c_n},$$
where $c=(c_1,c_2,\ldots c_n)$ runs over all $n$-tuples of colors in $U_r,$ one for each boundary component. By Theorem \ref{thm:TQFTdef}-(3) and Theorem \ref{thm:TQFTbasis}-(2), this vector is also the $RT_r$-vector of the cobordism consisting of $n$ solid tori, with the $i$-th solid torus containing the core colored by $c_i.$

We can write  $ RT_r(M_{f})=\underset{c}{\sum} \lambda_c {\bf e}_{c}$ where $\lambda_c=\langle RT_r(M_f), {\bf e}_{c}\rangle$. Thus we have

$$TV_r(M_{f})=\underset{c}{\sum}|\lambda_c|^2=\underset{c}{\sum} |\langle RT_r(M_f), {\bf e}_{c}\rangle|^2,$$
where ${\bf e}_c$ is the above orthonormal basis of $RT_r(\partial M_{f})$ and the sum runs over $n$-tuples of colors in $U_r.$  
 By Theorem \ref{thm:TQFTdef}-(3), the pairing $\langle RT_r(M_{f}),{\bf  e}_{c}\rangle$ is obtained by filling the boundary components of $M_{f}$ by solid tori and adding a link $L$ which is the union of the cores and the core of the $i$-th component is colored by $c_i.$
Thus by Theorem \ref{thm:tracequantumrep}, we have
$$\langle RT_r(M_{f}), {\bf e}_{c} \rangle= RT_r({M_{\tilde{f}}},(L,c))=\Tr (\rho_{r,c}(f)),$$
and thus
$$TV_r(M_{f})=\underset{c}{\sum}|\mathrm{Tr}\rho_{r,c}(f)|^2,$$
where the sum ranges over all colorings of the boundary components of $M_{f}$ by elements of $U_r.$
Now, on the one hand, since $lTV(M_{f})>0,$ the sequence $\{TV_r(M_{f})\}_r$ is bounded below by a sequence that is exponentially growing in $r$ as $r \rightarrow \infty.$ 
On the other hand, by Theorem \ref{thm:TQFTbasis}-(1), the sequence  $\underset{c}{\sum}\mathrm{dim}(RT_r(\Sigma,c))$ only grows polynomially in $r.$
 For big enough $r,$ there will be at least one $c$ such that $|\mathrm{Tr}\rho_{r,c}(f)|>\mathrm{dim}(RT_r(\Sigma,c))$. Thus by Lemma \ref{lem:order}, $\rho_{r,c}(\phi)$ will have infinite order.
\end{proof}

By a theorem of Thurston \cite{thurston:mappingtori}, a mapping class $f \in \mathrm{Mod}(\Sigma)$  is represented by a pseudo-Anosov  homeomorphism of $\Sigma$ 
if and only if the mapping torus $M_f$ is hyperbolic. 

As a consequence of Theorem \ref{amu-ltv}, whenever a hyperbolic 3-manifold $M$ that fibers over the circle has $lTV(M)>0,$ the monodromy of the fibration represents a mapping class
that satisfies the AMU Conjecture.

In the remaining of this paper we will be concerned with surfaces with boundary and mapping classes that appear as monodromies of fibered links in $S^3.$

\subsection{Link complements with $lTV>0$}
\label{sec:lTV>0}
Links with exponentially growing Turaev-Viro invariants will be the fundamental building block of our construction of examples of pseudo-Anosov mapping classes satisfying the AMU conjecture.
We will need the following result proved by the authors in
\cite{DK}.

\begin{theorem}\label{thm:ltvdehnfilling}{\rm {(\cite[Corollary 5.3]{DK})}} Assume that $M$ and $M'$ are oriented compact 3-manifolds with empty or toroidal  boundaries and such that
 $M$ is obtained by a Dehn-filling of $M'.$ Then we have:
 $$lTV(M)\leqslant lTV(M').$$
 
\end{theorem}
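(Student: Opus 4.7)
The plan is to prove the inequality first at the level of Turaev-Viro invariants for each odd $r\geqslant 3$, and then pass to the limit. By Theorem \ref{thm:BePe}, this amounts to comparing the squared Reshetikhin-Turaev norms $\|RT_r(M)\|^2$ and $\|RT_r(M')\|^2$, so the entire argument can be carried out inside the TQFT.

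Write $M=M'\cup_T V$, where $T$ is the disjoint union of those toroidal boundary components of $M'$ that get filled, and $V$ is the disjoint union of the corresponding solid tori glued along the prescribed Dehn filling slopes. By the gluing axiom Theorem \ref{thm:TQFTdef}(3),
$$RT_r(M)=\langle RT_r(M'),RT_r(V)\rangle_{RT_r(T)},$$
where the pairing is taken over the tensor factor $RT_r(T)\subset RT_r(\partial M')$; the framing anomaly contributes only a power of $\zeta_{2r}$ and disappears after taking modulus squared. Choosing the orthonormal basis $\{\mathbf{e}_c\}$ of $RT_r(T)$ provided by Theorem \ref{thm:TQFTbasis}(2) and expanding $RT_r(M')=\sum_c v_c\otimes\mathbf{e}_c$, the ordinary scalar Cauchy-Schwarz inequality applied to the coefficients obtained by pairing with $RT_r(V)$ yields the partial Cauchy-Schwarz estimate
$$\|RT_r(M)\|^2\leqslant \|RT_r(M')\|^2\cdot\|RT_r(V)\|^2.$$

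The main point is then to show that $\|RT_r(V)\|^2=1$. Each connected component of $V$ is obtained from the standard solid torus $D^2\times S^1$ by regluing its boundary along a diffeomorphism $\phi$ of $T^2$ realizing the chosen Dehn filling slope. By Theorem \ref{thm:TQFTbasis}(2), the standard solid torus has $RT_r$-vector $e_0$, a unit vector in the orthonormal basis of $RT_r(T^2)$; since the quantum representation of $\mathrm{Mod}(T^2)$ is projectively unitary with respect to the Hermitian form on $RT_r(T^2)$, the resulting vector for the reglued solid torus still has unit norm. Multiplicativity under disjoint unions (Theorem \ref{thm:TQFTdef}(3)) then gives $\|RT_r(V)\|^2=1$. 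Combining with Theorem \ref{thm:BePe} yields $TV_r(M)\leqslant TV_r(M')$ for every odd $r\geqslant 3$, and taking $\frac{2\pi}{r}\log$ followed by $\liminf_{r\to\infty}$ delivers the desired inequality $lTV(M)\leqslant lTV(M')$.

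I expect the main technical point requiring care to be the partial Cauchy-Schwarz inequality in the Hermitian space $RT_r(\partial M')$; after choosing the orthonormal basis above it does reduce to the scalar case, but writing the reduction cleanly in a basis-independent way while simultaneously tracking the framing anomaly (which must cancel in the final inequality because we work only with moduli) is the step where the bookkeeping is most delicate.
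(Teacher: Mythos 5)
Your argument is correct, and it is essentially the argument behind the cited result: the paper itself does not reprove Theorem \ref{thm:ltvdehnfilling} (it quotes \cite[Corollary 5.3]{DK}), but the route you take --- glue in the filling solid tori, expand over the orthonormal torus basis of Theorem \ref{thm:TQFTbasis}(2), apply Cauchy--Schwarz, and use that the solid-torus vector has unit norm --- is the same TQFT computation used there and closely parallels the expansion carried out in the proof of Theorem \ref{amu-ltv}. The only points worth making explicit are (i) that the Cauchy--Schwarz reduction is legitimate precisely because all boundary components involved are tori, where $\langle e_i,e_j\rangle=\delta_{ij}$ makes the Hermitian form positive definite (at $\zeta_{2r}=e^{i\pi/r}$ the form on higher-genus surfaces need not be definite, so the toroidal-boundary hypothesis is what saves the norm inequality), and (ii) that $\|RT_r(V)\|^2=1$ can be justified either as you do, via the fact that mapping cylinders act by isometries up to a power of $\zeta_{2r}$ so that $\|\rho_r(\phi)e_0\|=\|e_0\|=1$, or directly by noting that this norm is the invariant of the double of a solid torus, namely $RT_r(S^2\times S^1)=\langle e_0,e_0\rangle=1$.
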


Note that for a link  $L\subset S^3$, and a sublink $K\subset L$, the complement of $K$ is obtained from that of $L$ by Dehn-filling.
Thus Theorem \ref{thm:ltvdehnfilling} implies that if $K$ is a sublink of a link $L\subset S^3$ and  $lTV(S^3 \setminus K)>0,$ then we have $lTV(S^3 \setminus L) )>0.$

\begin{corollary} \label{positive} Let $K\subset S^3$ be the knot $4_1$ or a link with complement homeomorphic to that of the Boromean links or the Whitehead link. 
If $L$ is any link containing $K$ as a sublink then  $lTV(S^3 \setminus L) )>0.$
\end{corollary}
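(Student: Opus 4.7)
The plan is to deduce Corollary \ref{positive} immediately from Theorem \ref{thm:ltvdehnfilling} together with the known computation of $lTV$ for the three base link complements.

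First I would reduce the statement to the case $L = K$. Suppose $L$ contains $K$ as a sublink, so $L = K \cup J$ for some (possibly empty) link $J$ disjoint from $K$. Then $S^3 \setminus K$ is obtained from $S^3 \setminus L$ by Dehn-filling the boundary tori corresponding to the components of $J$ along their meridians. Applying Theorem \ref{thm:ltvdehnfilling} with $M = S^3 \setminus K$ and $M' = S^3 \setminus L$, I conclude
\[
lTV(S^3 \setminus K) \ \leqslant \ lTV(S^3 \setminus L).
\]
Thus it suffices to verify that $lTV(S^3 \setminus K) > 0$ when $K$ is the figure-eight knot $4_1$, the Borromean rings, or the Whitehead link.

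The second step is to invoke the known base cases. Each of these three hyperbolic link complements appears in \cite{DK} (and the companion work \cite{DKY}) as an example for which the Chen-Yang volume conjecture holds, so that $lTV(S^3\setminus K) = \vol(S^3 \setminus K)$. Since each of $4_1$, the Whitehead link, and the Borromean rings is hyperbolic with positive volume, we get $lTV(S^3 \setminus K) > 0$ in all three cases. Combining with the inequality from the first step yields $lTV(S^3 \setminus L) > 0$.

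I do not expect a real obstacle here: the argument is a two-line consequence of (i) monotonicity of $lTV$ under Dehn filling and (ii) the fact that $lTV > 0$ has already been established for the specified base links. The only place where one must be a little careful is in pointing out that the Dehn-filling hypothesis of Theorem \ref{thm:ltvdehnfilling} applies whenever one passes from a link complement to the complement of a sublink, which was already noted in the paragraph immediately preceding the corollary.
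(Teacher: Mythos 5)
Your overall structure is the same as the paper's: reduce to the base links via the Dehn-filling monotonicity of Theorem \ref{thm:ltvdehnfilling} (the paper records exactly this reduction in the paragraph preceding the corollary), and then verify positivity for the base complements. The gap is in your justification of the Whitehead link case. The results of \cite{DKY} establish the Chen--Yang volume conjecture, and hence the exact values $lTV(S^3\setminus 4_1)=2v_3$ and $lTV(S^3\setminus B)=2v_8$, only for the figure-eight knot and the Borromean rings; neither \cite{DK} nor \cite{DKY} computes $lTV$ of the Whitehead link complement, so your appeal to ``$lTV(S^3\setminus K)=\vol(S^3\setminus K)$ for all three base links'' is not supported for $K=W$. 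The paper closes this case with the same monotonicity theorem you already invoked: the figure-eight complement is obtained by Dehn filling one component of the Whitehead link, so Theorem \ref{thm:ltvdehnfilling} gives $lTV(S^3\setminus W)\geqslant lTV(S^3\setminus 4_1)=2v_3>0$. With that substitution your argument is complete.

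A minor point you leave implicit: the corollary allows $K$ to be any link whose complement is merely homeomorphic to that of the Borromean rings or the Whitehead link. This is harmless, but should be said: $TV_r$, and hence $lTV$, is an invariant of the homeomorphism type of the complement, so such a $K$ has the same $lTV$ as the model link, which is the observation the paper makes at the end of its proof.
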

\begin{proof}Denote by $B$ the Borromean rings. By \cite{DKY}, $lTV(S^3\setminus 4_1)=2v_3\simeq 2.02988$ and $lTV(S^3\setminus B)=2v_8\simeq 7.32772;$
and hence the conclusion holds for $B$ and $4_1.$
 The complement of $K=4_1$ is obtained by Dehn filing along one of  the components of the Whitehead link $W.$
Thus, by Theorem \ref{thm:ltvdehnfilling},  $lTV(S^3\setminus W)\geqslant 2 v_3>0.$ 
 For links with homeomorphic complements the conclusion follows since the Turaev-Viro invariants
are homeomorphism invariants of the link complement; that is, they will not distinguish different links with homeomorphic complements.
\end{proof}

\begin{remark}Additional classes of links with $lTV>0$ are given by the authors in \cite{DKY} and \cite{DK}. Some of these examples are non-hyperbolic. 
However it is known that any link is a sublink of a hyperbolic link \cite{Baker}. Thus one can start with any link $K$ with $lTV(S^3\setminus K)>0$ and construct hyperbolic links  $L$ containing $K$ as sublink; by Theorem \ref{thm:ltvdehnfilling} these will still have $lTV(S^3 \setminus L)> 0.$
\end{remark}


\section{A hyperbolic version of Stallings's homogenization}
\label{sec:homogenize}
A classical result of Stallings \cite{Stallings} states that every link $L$ is a sublink of fibered links  with fibers of arbitrarily large genera. Our purpose in this section is to prove the following hyperbolic version of this result.

\begin{theorem} \label{mainofsection} Given a link $L\subset S^3,$ 
 there are  hyperbolic links  $L',$ with $L\subset L'$ and
 such that the complement of $L'$ fibers over $S^1$ with fiber a surface of arbitrarily large genus.
\end{theorem}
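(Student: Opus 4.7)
The plan is to combine Alexander's braid presentation with Stallings' classical homogenization trick and then superimpose a hyperbolization step, all carried out inside the class of closed homogeneous braids so that fiberedness is automatic.

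By Alexander's theorem, $L$ is the closure $\widehat{\beta}$ of a braid $\beta$ on some number $n$ of strands. I would then apply Stallings' homogenization: for each negative generator $\sigma_i^{-1}$ appearing in $\beta$, introduce an extra braid strand that can be threaded through the corresponding crossing so as to replace $\sigma_i^{-1}$ by a positive word in the enlarged braid group. This yields a positive (hence homogeneous) braid $\beta_0$ on some $N \geqslant n$ strands whose closure still contains $L$ unchanged as a sublink, with the new components being unknotted curves arising from the added strands. By Stallings' theorem, $S^3 \setminus \widehat{\beta_0}$ fibers over $S^1$ with fiber the Bennequin surface of $\beta_0$, whose Euler characteristic is $N - c$ with $c$ the number of crossings of $\beta_0$. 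Appending further positive generators to $\beta_0$ then drives the genus of the Bennequin fiber to infinity without disturbing $L$ or losing homogeneity.

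The remaining, and most delicate, step is to ensure hyperbolicity. By Thurston's geometrization, $S^3 \setminus \widehat{\beta_0}$ is hyperbolic iff it is irreducible, atoroidal, anannular, and not Seifert-fibered. For closed braids, Seifert-fibered complements correspond to torus links, and essential tori are forced (after Birman--Menasco) to come from a cabling/satellite structure on the braid. Both obstructions can be broken by inserting, at suitable positions of $\beta_0$, additional positive generators and/or extra encircling unknotted strands that mix the braid strands sufficiently to destroy any cabling pattern and any torus-link structure. This may be certified either by direct combinatorial inspection in the spirit of Birman--Menasco, or by appealing to a hyperbolicity criterion for sufficiently augmented positive/homogeneous braids (of the sort underlying Baker's result cited above). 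Crucially, these modifications can be chosen to preserve both homogeneity of the braid and the presence of $L$ as a sublink of the closure.

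The main obstacle is thus the simultaneous compatibility of the three requirements: $L \subset \widehat{\beta}$ as a sublink, $\beta$ homogeneous (to keep Stallings' fiberedness), and $\widehat{\beta}$ hyperbolic. The strategy is to interleave the genus-raising augmentations with the hyperbolicity-ensuring augmentations of $\beta_0$, so that each new generator either increases the Euler characteristic contribution of the Bennequin surface or explicitly kills a potential essential torus or Seifert-fibered piece. Iterating this until the target genus is achieved produces a homogeneous braid $\beta$ whose closure $L'$ is a hyperbolic fibered link containing $L$, with fiber of arbitrarily prescribed genus, yielding the theorem.
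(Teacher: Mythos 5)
Your fiberedness and genus step is essentially the paper's: present $L$ as a closed braid, add strands à la Stallings so the braid becomes homogeneous with the new strands closing up to unknots, use Stallings' theorem that the closure of a homogeneous braid fibers with fiber the Seifert (Bennequin) surface, and raise the genus by adding crossings that do not touch $L$. (Be careful with your claim that $\sigma_i^{-1}$ can be replaced by a \emph{positive} word while leaving $L$ unchanged as a sublink: Stallings' trick does not positivize the original crossings, it only adds new strands so that each generator appears with a consistent sign; homogeneity is what you should aim for, and your genus-raising insertions must also be checked to preserve homogeneity and leave the new components unknotted.)

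The genuine gap is the hyperbolicity step, which in your write-up is an assertion rather than an argument: you say essential tori and Seifert structures ``can be broken'' by inserting extra generators or encircling strands, certified ``by direct combinatorial inspection in the spirit of Birman--Menasco'' or by an unspecified ``hyperbolicity criterion for sufficiently augmented homogeneous braids.'' No such criterion is stated or proved, Birman--Menasco is not actually invoked in any checkable form, and Baker's theorem (any link is a sublink of a hyperbolic link) says nothing about preserving fiberedness or homogeneity, so it cannot play that role. This is exactly where the real work lies. The paper first replaces $L$ by a hyperbolic link containing it, and then proves that hyperbolicity survives the addition of the Stallings component $K$ provided $K$ satisfies a concrete condition: its free homotopy class is non-peripheral in the hyperbolic manifold $S^3\setminus L$, and the linking numbers of $K$ with the components of $L$ have gcd $1$. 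Arranging the first part uses the holonomy representation into $\mathrm{PSL}_2(\C)$ and trace estimates for products of parabolic elements (inserting a subword $(a^{-l}b^{l})^{k}$ into $K$ for suitable Wirtinger generators $a,b$), which simultaneously drives the genus up; ruling out essential tori in $S^3\setminus(L\cup K)$ then uses a winding-number analysis of fibered links in solid tori (incompressibility of the fiber, and the fact that a fibered component of winding number $1$ is the core), and Seifert geometry is excluded by monotonicity of the Gromov norm under Dehn filling. Without some substitute for this analysis --- a precise condition on the added strands and a proof that it kills all essential tori and Seifert structures while keeping the braid homogeneous and $L$ intact --- your construction does not yield hyperbolic $L'$, so the proposal as it stands does not prove the theorem.
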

\subsection{Homogeneous braids}
Let $\sigma_1,\ldots, \sigma_{n-1}$ denote the standard braid generators of the $n$-strings braid group $B_n$.
We recall that a braid $\sigma \in B_n$ is said to be \emph{homogeneous} if each standard generator $\sigma_i$ appearing in $\sigma$ always appears  with exponents of the same sign. 
In \cite{Stallings}, Stallings studied relations between closed homogeneous braids and fibered links. We summarize his results as follows:

\begin{enumerate}

\item The closure of any homogeneous braid $\sigma \in B_n$ is a fibered link: The complement fibers over $S^1$ with fiber the surface $F$ obtained by Seifert's algorithm from the homogeneous closed braid
diagram. The Euler characteristic of $F$ is $\chi(F)=n-c(\sigma)$ where $c(\sigma)$ is the number of  crossings of $\sigma.$ 

\item Given a link $L=\hat{\sigma}$ represented as the closure of a braid $\sigma \in B_n,$  one can add additional strands to obtain a homogeneous braid $\sigma' \in B_{n+k}$
so that the closure of $\sigma'$ is a link $L\cup K,$ where $K,$ the closure of the additional $k$-strands, represents the unknot. Furthermore, we can arrange $\sigma'$ so that the linking numbers of $K$ with the components of $\hat{\sigma}$ are any arbitrary numbers. The link $L\cup K,$ as a closed homogeneous braid, is fibered.
\\

Throughout the paper we will refer to the component $K$ of  $L\cup K,$ as the Stallings component.
\end{enumerate}

In order to prove Theorem \ref{mainofsection},
given a hyperbolic link $L$, we want to apply Stallings' homogenizing method in a way such that the resulting link is still hyperbolic.

Let $L$ be a hyperbolic link with $n$ components  $L_1,\ldots, L_n.$  The complement $M_L:=S^3\setminus L$ is a hyperbolic 3-manifold with $n$
cusps; one for each component.  For each cusp, corresponding to some component  $L_i$,  there is a conjugacy class of a rank two abelian subgroup of $\pi_1(M_L).$ We will refer to this as  the {\emph{peripheral group}} of $L_i.$

\begin{definition} \label{defcondition}Let $L$ be a hyperbolic link with $n$ components  $L_1,\ldots, L_n.$ We say that an unknotted circle $K$ embedded in $S^3 \setminus L$ satisfies condition $(\clubsuit )$ if
(i)  the free homotopy class $[K]$ does not lie in a peripheral group of any component of $L$; and (ii)  we have 
$$\mathrm{gcd}\left(lk(K,L_1),lk(K,L_2),\ldots , lk(K,L_n)\right)=1.$$ 
\end{definition}

The rest of this subsection is devoted to the proof of the following proposition that is needed for the proof of Theorem \ref{mainofsection}.

\begin{proposition}\label{prop:condition} Given a hyperbolic link  $L,$ one can choose the Stallings component $K$ so that  (i) $K$ satisfies condition $(\clubsuit )$;  and (ii)
the fiber of the complement of $L\cup K$ has arbitrarily high genus.
\end{proposition}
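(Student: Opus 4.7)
The plan is to start from a closed-braid presentation $L=\hat\sigma$ with $\sigma\in B_n$ and run Stallings' procedure summarized in item (2) of this section: add $k$ new strands to form a homogeneous braid $\sigma'\in B_{n+k}$ whose closure is $L\cup K$, where $K$ is the unknotted closure of the added strands. The three requirements — arbitrarily high fiber genus, the gcd condition, and non-peripherality — should all be arranged by making judicious choices inside this construction.

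Arbitrarily high genus is easy: the Seifert surface produced by Stallings' theorem has Euler characteristic $(n+k)-c(\sigma')$, and one may insert extra homogeneous blocks of generators (for instance, replacing a $\sigma_j$ appearing in $\sigma'$ by $\sigma_j^{2m+1}$, which preserves homogeneity, the isotopy type of $L\cup K$, and the unknottedness of $K$) to drive $c(\sigma')$ arbitrarily large. Condition (ii) of $(\clubsuit)$ is almost as easy: $lk(K,L_i)$ equals the signed count of crossings between the added strands and the sub-braid representing $L_i$, and by Stallings' construction these can be prescribed independently. So one arranges, say, $lk(K,L_1)=\pm 1$, which immediately forces $\gcd\bigl(lk(K,L_1),\ldots,lk(K,L_n)\bigr)=1$.

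Condition (i) is the substantive step. If $[K]$ belonged to the peripheral subgroup of some $L_i$, then $K$ would be freely homotopic in $S^3\setminus L$ to a curve $p\mu_i+q\lambda_i$ on $\partial N(L_i)$, forcing
\[
(lk(K,L_1),\ldots,lk(K,L_n)) \;=\; p\,e_i \,+\, q\,\ell_i,
\]
where $\ell_i=(lk(L_i,L_1),\ldots,lk(L_i,L_n))$ with $lk(L_i,L_i):=0$. Hence this vector lies in the rank-$\leqslant 2$ sublattice $\Lambda_i\subset\Z^n$ spanned by $e_i$ and $\ell_i$. For $n\geqslant 2$ there are only finitely many such sublattices, so the linking numbers of $K$ can be prescribed outside $\bigcup_i\Lambda_i$ while still yielding gcd $=1$, which settles (i). For $n=1$ we have $\Lambda_1=\Z$, so the homological obstruction is vacuous and one must argue directly that $[K]$ is not conjugate to the meridian $[\mu_L]$ in $\pi_1(S^3\setminus L)$; since $L$ is a hyperbolic knot, the meridian is the only unknotted peripheral curve, so this is the only case to exclude. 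I would handle it by inserting additional cancelling pairs of crossings between the new strand and strands of $\sigma$, which keeps $lk(K,L)=\pm 1$ but forces the geometric intersection of $K$ with every disk bounded by $L$ to be at least three, preventing $K$ from being freely homotopic to $\mu_L$ in the hyperbolic complement.

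The main obstacle is precisely this $n=1$ subcase: the homological linking-number argument disappears there, and one must verify non-peripherality geometrically, exploiting that $L$ is hyperbolic to reduce to the single exclusion $[K]\ne[\mu_L]$. Everything else — independent choice of the linking numbers, inflation of crossings to get arbitrarily high genus, and the gcd condition — follows directly from the flexibility built into Stallings' construction once the braid word is in hand.
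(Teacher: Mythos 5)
Your homological strategy for condition (i) is genuinely different from the paper's (which never uses linking-number lattices; it chooses Wirtinger generators $a,b$ whose holonomy images in $\mathrm{PSL}_2(\C)$ do not commute, inserts a subword $(a^{-l}b^{l})^{k}$ into $[K]$, and shows $|\Tr\,\rho([K])|\neq 2$ by the trace estimates of Lemma \ref{lem:traces}, so $[K]$ cannot be peripheral), and your lattice-avoidance observation is correct and sufficient when it applies. But it does not cover all hyperbolic links, and the cases it misses are exactly where your argument has gaps. First, avoidance of $\bigcup_i\Lambda_i$ is not always possible: for $n=2$ with $lk(L_1,L_2)=\pm 1$ one has $\Lambda_1=\Lambda_2=\Z^2$, so no choice of linking numbers escapes the union; your claim that for $n\geqslant 2$ one can always prescribe the linking vector outside the lattices is false, and this case is not addressed. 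Second, for $n=1$ your reduction to excluding only the meridian is invalid: if $[K]$ is conjugate into the peripheral subgroup, homology (with $lk(K,L)=\pm1$) only forces $[K]\sim\mu^{\pm1}\lambda^{q}$ with $q\in\Z$ arbitrary, and the unknottedness of $K$ gives no restriction, since free homotopy does not preserve knot type — $K$ could be freely homotopic to a knotted curve $\mu^{\pm1}\lambda^{q}$ on the boundary torus. Third, the proposed obstruction for that remaining case ("inserting cancelling pairs of crossings forces the geometric intersection of $K$ with every disk bounded by $L$ to be at least three") does not work: a hyperbolic $L$ bounds no disk, and even under a charitable reinterpretation, cancelling crossing pairs change the diagram while the resulting relators $ab^{-1}$ must be shown to be non-peripheral in $\pi_1(S^3\setminus L)$ — that is precisely the hard step, and no argument is given. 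This is where the paper invokes hyperbolicity quantitatively (parabolicity of peripheral elements, non-commuting meridians via Lemma \ref{lem:meridians}, and growth of $|\Tr(A^{-l}B^{l})^{k}\rho(w)|$), and nothing in your sketch substitutes for it.

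Two secondary issues. Your genus-inflation move $\sigma_j\mapsto\sigma_j^{2m+1}$ does not preserve the isotopy type of $L\cup K$ (e.g.\ $\hat{\sigma}_1$ is the unknot while $\hat{\sigma}_1^3$ is a trefoil); if the twisted strands lie in $L$ it destroys the requirement $L\subset L'$, and if they are mixed it changes $lk(K,L_i)$, so the gcd and lattice-avoidance conditions must be re-verified after inflation rather than arranged beforehand. This is repairable (twist a $K$-strand against an $L$-strand and redo the bookkeeping, as the paper effectively does by letting $k,l\to\infty$ in the inserted pattern, which raises the crossing number at fixed braid index), but as written the order of operations is not sound. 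In summary: your approach gives a clean alternative proof when $n\geqslant 3$ (and for $n=2$ with $|lk(L_1,L_2)|\neq 1$), but the knot case and the linking-number-one two-component case are genuine gaps that the paper's $\mathrm{PSL}_2(\C)$-trace argument is designed to close.
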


Since  $L$ is hyperbolic, we have the 
 discrete faithful representation 
 
 $$\rho:  \pi_1(S^3\setminus L)\longrightarrow \mathrm{PSL}_2(\C).$$
 We recall that an element of $A\in  \mathrm{PSL}_2(\C)$ is called {\emph{parabolic}} if  $\Tr (A)=\pm 2,$
 and that $\rho$ takes  elements  in the peripheral subgroups of $ \pi_1(S^3 \setminus L)$ to parabolic elements  in $ \mathrm{PSL}_2(\C).$
 Since matrix trace is invariant under conjugation, in the discussion below we will not make distinction between elements in $\pi_1(M_L)$ and their conjugacy classes.
 With this understanding we recall that
  if an element $\gamma \in \pi_1(S^3 \setminus L)$ satisfies $\Tr (\rho (\gamma))\neq \pm 2,$ then it does not lie in any peripheral subgroup \cite[Chapter 5]{thurston:notes}.

\begin{lemma}\label{lem:traces} Let $A$ and $B$ be elements in $\mathrm{PSL}_2(\C).$
\begin{itemize}
\item[(1)]If $A$ and $B$ are non commuting parabolic elements  then $|\Tr( A^l B^{-l})| >2$ for some $l.$
\item[(2)]If $|\Tr(A)|>2,$ then $|\Tr (A^k B)|\neq 2$ for all $k$ big enough.
\end{itemize}
\begin{proof}
For (1), note that after conjugation we can take $A=\begin{pmatrix}
1 & 0 \\ 1 & 1
\end{pmatrix}$ and $B=\begin{pmatrix}
1 & x \\ 0 & 1
\end{pmatrix}.$ Then $$|\Tr(A^l B^{-l})|=|1-l^2 x| \underset{l\rightarrow\infty}{\rightarrow} \infty.$$
For (2),  after conjugation take $A=\begin{pmatrix}
\lambda & 0 \\ 0 & \lambda^{-1}
\end{pmatrix}$ where $|\lambda|>1$ and write $B=\begin{pmatrix}
 u & v \\ w & x
\end{pmatrix}.$ Then 
 $$\Tr (A^k B)=\lambda^k u+\lambda^{-k} x,$$
 which as $k \rightarrow +\infty$ tends either to infinity if $u\neq 0$ or to $0$ else. In the first case we will have
 $|\Tr (A^k B)|> 2$  for $k$ big enough; in the second case we will have 
$|\Tr (A^k B)|< 2$. In both cases we have  $|\Tr (A^k B)|\neq 2$ as desired.
\end{proof}
\end{lemma}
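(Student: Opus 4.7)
The plan is to reduce each part to an explicit matrix computation after placing $A$ (and, in part (1), also $B$) in a convenient normal form via conjugation; since the trace on $\mathrm{PSL}_2(\C)$ is a conjugation invariant up to sign, and both parts of the statement only involve $|\Tr|$, this normalization is harmless.

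For part (1), I would invoke the standard fact that two parabolic elements of $\mathrm{PSL}_2(\C)$ commute if and only if they share their unique fixed point on $\partial \HH^3 = \widehat{\C}$. Since $A$ and $B$ are non-commuting parabolics, their fixed points are distinct. After a first conjugation I can arrange that $A$ fixes $0$ and is the lower triangular matrix with $1$'s on the diagonal and a $1$ in the lower-left entry. The stabilizer of $0$ in $\mathrm{PSL}_2(\C)$ still acts transitively on $\widehat{\C}\setminus\{0\}$ by scaling, so a further conjugation by a diagonal matrix moves the fixed point of $B$ to $\infty$, making $B$ upper triangular with $1$'s on the diagonal and some entry $x \neq 0$ in the upper-right. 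A one-line matrix multiplication then yields $\Tr(A^l B^{-l}) = 2 - l^2 x$, whose modulus tends to $\infty$ with $l$, so $|\Tr(A^l B^{-l})| > 2$ for all sufficiently large $l$.

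For part (2), the hypothesis $|\Tr(A)| > 2$ forces $A$ to be loxodromic, since an elliptic or parabolic element has $|\Tr| \leqslant 2$. Thus the eigenvalues of $A$ are $\lambda, \lambda^{-1}$ with $|\lambda| \neq 1$, and up to relabeling $|\lambda| > 1$. Conjugating to diagonalize, I may assume $A = \mathrm{diag}(\lambda,\lambda^{-1})$, and writing $B$ with entries $u, v, w, x$, I compute $\Tr(A^k B) = \lambda^k u + \lambda^{-k} x$. If $u \neq 0$, then $|\lambda^k u| \to \infty$ dominates the bounded term $\lambda^{-k} x$, so $|\Tr(A^k B)| \to \infty$; if $u = 0$, then $\Tr(A^k B) = \lambda^{-k} x$ has modulus tending to $0$. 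Either way, $|\Tr(A^k B)| \neq 2$ for all sufficiently large $k$.

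I do not anticipate any serious obstacle: both assertions are essentially one-line matrix computations once the conjugation normalization is carried out, and the only ingredient beyond linear algebra is the geometric classification of commuting pairs of parabolics, which is standard.
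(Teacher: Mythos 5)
Your proof follows the paper's argument essentially verbatim: both parts use the same normal forms and the same one-line trace computations (your value $\Tr(A^l B^{-l}) = 2 - l^2 x$ is in fact the correct one — the paper's $1 - l^2 x$ is a harmless slip — and either way the modulus tends to infinity since $x \neq 0$), and in (2) you even supply the justification, omitted in the paper, that $|\Tr(A)|>2$ forces $A$ to be diagonalizable with $|\lambda|>1$. The one inaccuracy is your description of the normalization in (1): a diagonal conjugation fixes $0$ and $\infty$ and acts on $\widehat{\C}\setminus\{0,\infty\}$ by scaling, so it cannot move the finite fixed point of $B$ to $\infty$; instead, send the two distinct fixed points of $A$ and $B$ to $0$ and $\infty$ by a single M\"obius transformation (making $A$ lower and $B$ upper unipotent), and only then rescale by a diagonal conjugation to make the lower-left entry of $A$ equal to $1$ — this yields exactly the normal form the paper asserts, so the slip is easily repaired and does not affect the computation.
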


Next we will consider the Wirtinger presentation of $\pi_1(S^3 \setminus L)$ corresponding to a link diagram 
representing a
hyperbolic link $L$  as a closed braid 
$\hat{\sigma}.$  The Wirtinger generators are conjugates of meridians of the components of $L$ and are mapped to parabolic elements of $\mathrm{PSL}_2(\C)$ by $\rho$.
A key point in the proof of Proposition \ref{prop:condition} is to choose  the Stallings component $K$  so that the word it represents in
$\pi_1(S^3 \setminus L)$ is conjugate to one that  begins with a sub-word
 $(a^l b^{-l})^k,$ where $a$ and $b$ are Wirtinger generators mapped to non-commuting  elements under $\rho$. Then we will use
 Lemma \ref{lem:traces} to prove that the free homotopy class $[K]$ is not in a peripheral subgroup of any component of $L$.
 We first need the following lemma:
 
\begin{lemma}\label{lem:meridians} Let $L$ be a hyperbolic link in $S^3,$ with a link diagram of a closed braid 
$\hat{\sigma}.$ We can find two strands of $\sigma$ meeting at a crossing so that if $a$ and $b$ are the Wirtinger generators corresponding to an under-strand and the  over-strand of  the crossing respectively, then
$\rho(a)$ and $\rho(b)$ don't commute.

\end{lemma}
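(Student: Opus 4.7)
The plan is to argue by contradiction. Suppose that at \emph{every} crossing of the braid $\sigma$, the Wirtinger generators labeling an under-strand and the over-strand have commuting images under $\rho$. Each Wirtinger generator is conjugate to a meridian of $L$ and therefore lies in a peripheral subgroup; since $\rho$ is the discrete faithful representation of a cusped hyperbolic 3-manifold group, it sends every Wirtinger generator to a (nontrivial) parabolic element of $\mathrm{PSL}_2(\C)$. The elementary fact I will use is that two nontrivial parabolic elements of $\mathrm{PSL}_2(\C)$ commute if and only if they share their unique fixed point on $\partial\HH^3\cong \mathbb{CP}^1$.

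Introduce the graph $G$ whose vertices are the arcs of the braid diagram of $\hat{\sigma}$ (equivalently, the Wirtinger generators) and whose edges join any two arcs that meet at a crossing. Because $L$ is hyperbolic, and in particular non-split, the graph $G$ is connected. At a crossing with under-arcs $a,c$ and over-arc $b$, the Wirtinger relation reads $c=b^{\pm 1}ab^{\mp 1}$, so the hypothesis $[\rho(a),\rho(b)]=1$ forces $\rho(c)=\rho(a)$ and, via the commuting-parabolics fact, implies that $\rho(a),\rho(b),\rho(c)$ share a single fixed point on $\partial\HH^3$. Propagating along the edges of $G$, I conclude that there is one point $p\in\partial\HH^3$ fixed by $\rho(w)$ for every Wirtinger generator $w$.

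Since Wirtinger generators generate $\pi_1(S^3\setminus L)$, the whole image $\rho(\pi_1(S^3\setminus L))$ would then lie in the (solvable, in fact affine) stabilizer of $p$ in $\mathrm{PSL}_2(\C)$. This contradicts the non-elementarity of the discrete faithful representation of a hyperbolic link complement: $\pi_1(S^3\setminus L)$ contains non-abelian free subgroups, so its $\rho$-image cannot fix a point on $\partial\HH^3$. The main thing to be careful about is the propagation step, namely the connectedness of $G$ for a braid diagram of a non-split link; this is standard but needs to be invoked. Given the contradiction, some crossing of $\sigma$ must have over- and under-strand Wirtinger generators whose $\rho$-images do not commute, as required.
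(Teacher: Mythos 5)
Your proof is correct and follows essentially the same route as the paper's: argue by contradiction, use that commuting nontrivial parabolics share their unique fixed point (the paper says ``eigenline''), propagate this through the connected diagram (your graph $G$ is the paper's ``step by step''), and conclude that the whole image of $\rho$ would be elementary, which is impossible. The only cosmetic difference is the final contradiction: the paper notes the image would be abelian (all generators becoming unipotent with a common fixed point), while you invoke solvability of the point stabilizer versus the existence of free subgroups; both are valid endings to the same argument.
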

\begin{proof}
Suppose that for any pair of  Wirtinger  generators $a, b$ corresponding to a crossing as above, $\rho(a)$ and $\rho(b)$ commute.
Since  $\rho(a)$ and $\rho(b)$  are commuting parabolic elements of infinite order in $\mathrm{PSL}_2(\C),$ elementary linear algebra shows that they share 
 their unique eigenline. Then step by step, we get that the images under $\rho$ of all Wirtinger generators share an eigenline. But this would imply that $\rho\left(\pi_1(S^3\setminus L)\right)$ is abelian
  which is a contradiction.
\end{proof}
We can now turn to the proof of Proposition \ref{prop:condition}, which we will prove by tweaking Stallings homogenization procedure. 

\begin{proof}[Proof of Proposition \ref{prop:condition}]  Let $L$ be a hyperbolic link, with components, 
 $L_1,\ldots,L_n$, represented as a braid closure ${\hat{ \sigma}}$. Let $a,b$ be Wirtinger generators of $\pi_1(S^3\setminus L)$ chosen as in 
 Lemma \ref{lem:meridians}.
 
 Starting with the projection of ${\hat{ \sigma}}$, we proceed 
in the following way:

We arrange the crossings of ${\hat{ \sigma}}$ to occur at different verticals  on the projection plane.
\begin{enumerate}

\item Begin drawing the Stallings component so that near the strands where above chosen Wirtinger generators $a,b$ occur,
we 
create the pattern shown the left of Figure \ref{fig:pattern}. 

\item We deform the strands of $\sigma$ to create
`` zigzags"  as shown in the second drawing of Figure \ref{fig:homogenization}. 

\item We fill the empty spaces in verticals with new braid strands 
and choose the new crossings so that the resulting braid is homogeneous and so that the new strands  meet the strands  of $\sigma$ both in positive and negative crossings. 
Adding enough `` zigzags" at the previous step will ensure that there is enough freedom in choosing the crossings to make this second condition possible.

\item  At this stage, we have turned the braid $\sigma$ into a homogeneous braid, say $\sigma_h$. The closure ${\hat{ \sigma_h}}$  contains $L$ as a sublink and some number $s\geqslant 1$ of unknotted components.
To reduce the number of components added, we connect the new components with a single crossing between each pair of neighboring new components. Doing so we may have to create new crossings with the components of $L,$ but we can always choose them to preserve homogeneousness. Thus we homogenized $L$ by adding a single unknotted component $K$ to it. 
\end{enumerate}
The four step process described above  is illustrated in Figure \ref{fig:homogenization}.
\begin{figure}
  \centering
    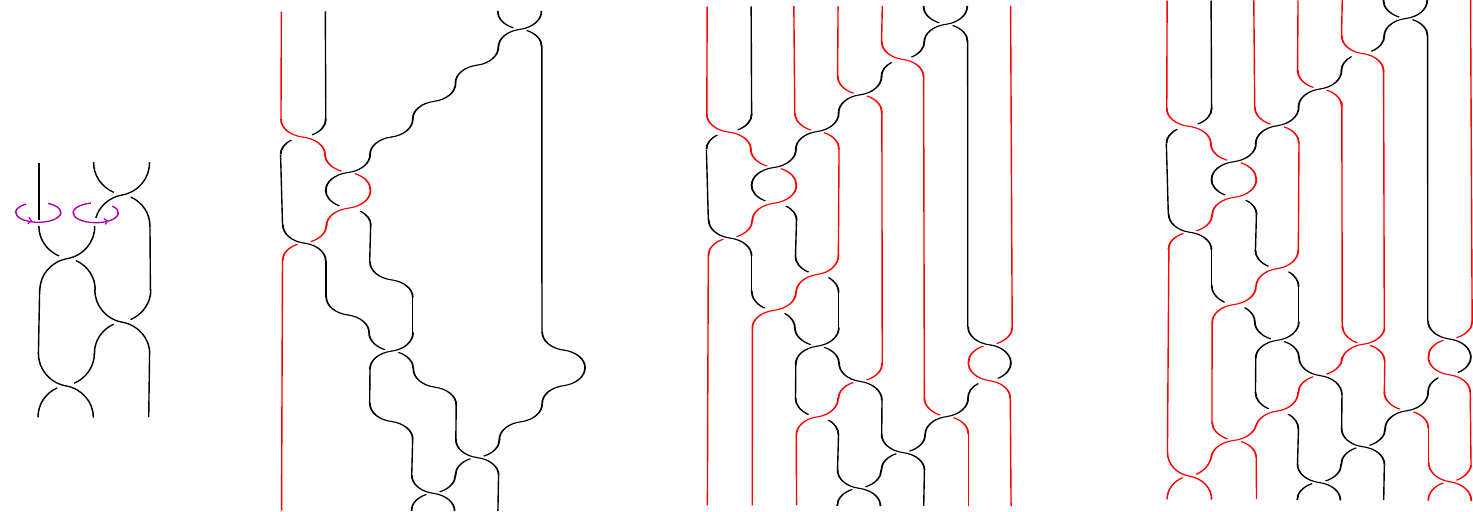
  \caption{The four step homogenization process.}
  \label{fig:homogenization}
\end{figure}

Now, because we have positive and negative crossings of $K$ with each component of $L,$ we can set the linking numbers as we want just by adding an even number of positive or negative crossings between $K$ and a component of $L$ locally.
If the strands $a$ and $b$ correspond to the same component $L_1,$ we simply ask that $lk(K,L_1)=1.$
If they correspond to two distinct components $L_1$ and $L_2,$ we choose $(lk(K,L_1),lk(K,L_2))=(1,0).$

Recall that we have chosen $a,b$ to be Wirtinger generators of $\pi_1(S^3\setminus L)$, as  in 
 Lemma \ref{lem:meridians}, and so
that  $K$ is added to $L$
so that  the pattern shown on the left hand side of Figure \ref{fig:pattern} occurs near the corresponding crossing.
Assume that $[K]$ is conjugate to a word $w \in \pi_1(S^3\setminus L).$  Now one may modify the diagram of $L\cup K$ locally, as shown in the right hand side of Figure \ref{fig:pattern},
to make $[K]$ conjugate to $(a^{-l} b^l)^k w$ for any non-negative $k$ and $l.$  Notice also this move leaves $K$ unknotted and that $L \cup K$  is still a closed homogeneous braid.
 Also notice  that doing so, we left $lk(K,L_1)$ unchanged if $a$ and $b$ were part of the same component $L_1,$ and we turned $(lk(K,L_1),lk(K,L_2))$ into $(1-kl,kl)$ if they correspond to different components $L_1$ and $L_2.$ In both cases, we preserved the fact that 
  $$gcd(lk(K,L_1),lk(K,L_2),\ldots ,lk(K,L_n))=1$$
 and $K$ satisfies part (ii) of Condition $(\clubsuit).$

\begin{figure}
   \centering  
  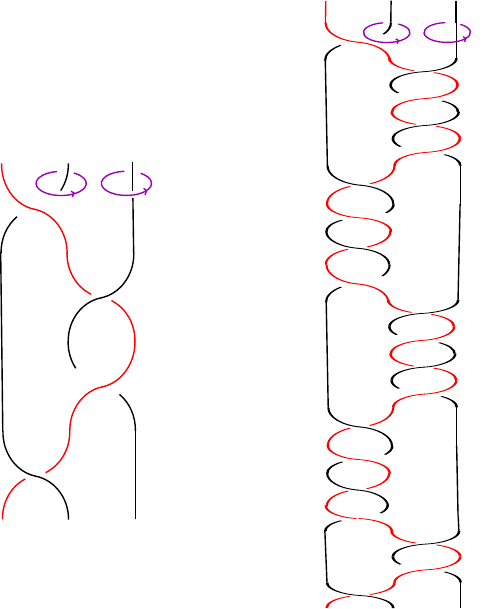
  \caption{Changing  $[K]$ from a conjugate of  $w \in \pi_1(S^3\setminus L)$ (left), to a conjugate of $w(a^{-l}b^l)^k$ (right), for any non-negative $l$ and $k.$ Here $l=k=2.$}
  \label{fig:pattern}
\end{figure}

To ensure that part (i) of the condition is satisfied, note that since $\rho(a)$ and $\rho(b)$ are non-commuting, Lemma \ref{lem:traces} (1)  implies $|\Tr (\rho(a)^{-l}\rho(b)^l)|> 2$ for $l>>0.$ Thus by choosing $k>>0,$ and using   Lemma \ref{lem:traces} (2), we may assume that $|\Tr(A^k B)|\neq 2,$ where $A=\rho(a)^{-l} \rho(b)^{l}$ and $B=\rho(w).$ Then
$[K]=A^k B$ is not in a peripheral subgroup of $\pi_1(S^3\setminus L).$

Now notice that as above mentioned positive integers $k,l$ become arbitrarily large, the crossing number of the resulting homogeneous braid projections becomes arbitrarily large while the braid index remains unchanged. Since the fiber of the fibration of a closed homogeneous braid is the Seifert surface of the closed braid projection it follows that as  $k,l\to \infty$, the genus of the fiber becomes arbitrarily large.
 \end{proof}
 
\subsection{Ensuring hyperbolicity}

In this subsection we will finish the proof of Theorem \ref{mainofsection}. For this we need the following:

\begin{proposition}\label{prop:hyperbolic}  Suppose that $L$ is a hyperbolic link and let $L\cup K$ be a  homogeneous closed  braid obtained from $L$ by adding a Stallings 
component  $K$  that satisfies condition $(\clubsuit )$.  Then $L\cup K$ is a hyperbolic link.
\end{proposition}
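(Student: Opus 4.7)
My plan is to verify the three hypotheses of Thurston's hyperbolization theorem for link complements: that $S^3\setminus(L\cup K)$ is irreducible, atoroidal, and not Seifert fibered. Each will be deduced by combining condition $(\clubsuit)$ with the hyperbolicity of $S^3\setminus L$.

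For irreducibility, I will use that $L$ is non-split (being hyperbolic) and that condition $(\clubsuit)$-(ii) forces $K$ to have non-zero linking number with at least one component of $L$, so $L\cup K$ is non-split and its complement is irreducible.

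The atoroidality step is the main substance of the argument. I would assume toward a contradiction that $T\subset S^3\setminus(L\cup K)$ is essential and view $T$ inside the hyperbolic manifold $M_L:=S^3\setminus L$. Since $M_L$ is atoroidal, $T$ is boundary-parallel or compressible in $M_L$. If $T$ is boundary-parallel, then it cobounds a product region $R\cong T^2\times I$ with some $\partial N(L_i)$, and essentiality of $T$ in $S^3\setminus(L\cup K)$ forces $K\subset R$; but then $K$ is a simple closed curve in $T^2\times I$, which is either null-homotopic (so $[K]$ is trivial and therefore lies in every peripheral subgroup) or isotopic to a fiber torus (so $[K]$ lies in the peripheral subgroup of $L_i$), both contradicting $(\clubsuit)$-(i). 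If $T$ is compressible in $M_L$, then irreducibility of $M_L$ gives a solid torus $V\subset M_L$ bounded by $T$ with $V\cap L=\emptyset$, and every meridian disk of $V$ must meet $K$ (else $T$ would also compress in $S^3\setminus(L\cup K)$), so $K\subset V$. Letting $c$ be the core of $V$ and $p$ the winding number of $K$ in $V$, the fact that $K$ is homologous to $p\cdot c$ in $V\subset S^3\setminus L_i$ for each $i$ gives $lk(K,L_i)=p\cdot lk(c,L_i)$, and condition $(\clubsuit)$-(ii) then forces $|p|=1$. Appealing to the classical fact that a simple closed curve with winding number $\pm 1$ in a solid torus is isotopic to the core, one obtains $V\setminus N(K)\cong T^2\times I$, so $T$ is boundary-parallel to $\partial N(K)$ in $S^3\setminus(L\cup K)$, contradicting its essentiality.

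For the last hypothesis, I will use that any Dehn filling of a Seifert fibered $3$-manifold with toroidal boundary is itself Seifert fibered or reducible, hence never hyperbolic. Since the meridional Dehn filling on $K$ recovers the hyperbolic manifold $S^3\setminus L$, the complement $S^3\setminus(L\cup K)$ cannot be Seifert fibered. Thurston's hyperbolization theorem will then give that $S^3\setminus(L\cup K)$ admits a complete finite-volume hyperbolic structure, i.e., $L\cup K$ is hyperbolic. The main technical difficulty will be the compressible subcase of atoroidality, specifically the identification of $K$ with the core of $V$ once $|p|=1$ is established; this relies on a minimum-intersection argument against a meridional disk of $V$, or on citation of the classification of simple closed curves in solid tori.
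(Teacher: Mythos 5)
Your overall architecture (irreducible $+$ atoroidal $+$ not Seifert fibered, with the Seifert case excluded because the meridional filling along $K$ returns the hyperbolic manifold $S^3\setminus L$) is sound, and your boundary-parallel case is essentially the paper's: any curve in the product region has free homotopy class conjugate into the peripheral subgroup of $L_i$ (this is cleanest via $\pi_1(T^2\times I)\cong\Z^2$; your dichotomy ``null-homotopic or isotopic into a fiber torus'' is false as an isotopy statement, but harmless here). However, the compressible case has two genuine gaps, and both are exactly where the paper has to invoke the hypothesis you never use: that $L\cup K$ is a homogeneous closed braid, hence \emph{fibered}. First, it is not true that compressibility of $T$ in the irreducible manifold $M_L$ yields a solid torus $V$ bounded by $T$ with $V\cap L=\emptyset$. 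What is true is that the side of $T$ in $S^3$ containing the compressing disk is a solid torus $V$, but $V$ may contain components of $L$ (the disk misses $L$, the rest of $V$ need not). This missing case is substantive: one argues that then \emph{all} of $L$ (by non-splitness) and $K$ lie in $V$, all with winding number zero ($V$ is knotted since $T$ does not compress outside, and $K$ is unknotted), and one needs the fact that a \emph{fibered} link cannot sit in a solid torus with incompressible boundary with every component of winding number zero (the paper's Lemma 4.5(1), proved with the fiber surface). Condition $(\clubsuit)$ alone does not exclude such tori for non-fibered $L\cup K$, so no argument ignoring fiberedness can close this case.

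Second, in the case $V\cap L=\emptyset$ and $K\subset V$, your ``classical fact'' that a simple closed curve of winding number $\pm 1$ in a solid torus is isotopic to the core is false: there are patterns of algebraic winding number $1$ and arbitrarily large geometric wrapping number that are not isotopic to the core (e.g.\ the Mazur pattern, whose complement in the solid torus is not $T^2\times I$), so no minimum-intersection argument against a meridian disk can rescue the step. The paper again uses fiberedness here: its Lemma 4.5(2) isotopes the fiber surface so that $T$ becomes compatible with the fibration, making $K$ fibered in $V$, and then cites Hirasawa--Murasugi--Silver to conclude that a fibered winding-number-one knot in a solid torus is the core, whence $T$ is boundary parallel. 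So both halves of your compressible case need the fibration of the homogeneous braid closure; as written, the proposal does not prove atoroidality.
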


Before we can proceed with the proof of Proposition \ref{prop:hyperbolic} we need some preparation:
We recall that when an oriented link $L$ is embedded in a solid torus, the total winding number of $L$ is the non-negative integer $n$ such that $L$ represents $n$ times a generator of $H_1(V,\Z).$
When convenient we will  consider  $M_{L\cup K}$ to be the compact 3-manifold obtained by removing the interiors of neighborhoods of the components of $L\cup K$; the interior of $M_{L\cup K}$
is homeomorphic to $S^3 \setminus (L\cup K)$.
In the course of the proof of the proposition we will
see that condition $(\clubsuit)$ ensures that the complement of $L\cup K$ cannot contain embedded tori that are not boundary parallel or compressible (i.e.  $M_{L\cup K}$  is {\emph{atoroidal}}).
We need the following lemma that provides restrictions on winding numbers of satellite fibered links.

\begin{lemma}\label{lem:windingnumber}We have the following:

\begin{itemize}
\item[(1)] Suppose that $L$ is a oriented fibered link in $S^3$ that is embedded in a solid torus $V$ with boundary $T$ incompressible in $S^3\setminus L.$ Then, some component of $L$ must have non-zero
 winding number.
\item[(2)] Suppose that $L$ is an oriented fibered link in $S^3$ such that only one component $K$  is embedded inside a solid torus $V.$ If $K$ has   winding number $1,$ then $K$ is isotopic to the core of $V.$
\end{itemize}
\end{lemma}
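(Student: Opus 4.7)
For (1), my plan is to assume all winding numbers $w_i$ vanish and derive a contradiction by analyzing the restriction of the fibration $p\colon S^3\setminus L\to S^1$ to $T$. The class $[p|_T]\in H^1(T;\Z)$ satisfies $[p|_T](\mu_T)=\sum_i w_i=0$, so it is either zero or a non-zero multiple of $\lambda_T^*$. If $[p|_T]=0$, then $T$ lifts to the infinite cyclic cover $\widetilde{S^3\setminus L}\simeq F\times\RR$ as an incompressible embedded torus; this gives an injection $\Z^2\cong\pi_1(T)\hookrightarrow\pi_1(F)$, but $\pi_1(F)$ is free (as $F$ has non-empty boundary), a contradiction. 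If $[p|_T]$ is a non-zero multiple of $\lambda_T^*$, then after isotoping $F$ to minimize $F\cap T$, the intersection consists of a non-empty collection of parallel essential curves on $T$ of slope $\mu_T$, all of the same orientation (pairs of oppositely oriented curves cobound a cancelling annulus on $T$, which would reduce the intersection). Since $[\mu_T]$ generates $H_1(\overline{S^3\setminus V})\cong\Z$, the boundary of the surface $F\cap\overline{S^3\setminus V}$ represents a non-zero class in $H_1(\overline{S^3\setminus V})$, contradicting the fact that $\partial S$ must vanish in $H_1$ whenever $S$ is a properly embedded surface.

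For (2), I will work under the (implicit) assumption that $T$ is incompressible in $S^3\setminus L$, which is the setting in which the lemma will be applied. Since $K$ is the only component of $L$ inside $V$ and has winding number $1$, we have $[p|_T](\mu_T)=w_K=1$, so $[p|_T]$ is primitive in $H^1(T;\Z)$. After minimizing, $F\cap T$ is then a single essential simple closed curve $c$ on $T$, whose slope $\lambda_T-b\mu_T$ (for some $b\in\Z$) makes it a longitude of $V$. The sub-surface $\Sigma_V:=F\cap(V\setminus\mathrm{int}\,N(K))$ is therefore a connected, orientable surface with exactly two boundary components---the curve $c$ on $T$ and a longitude of $K$ on $\partial N(K)$---and it is the fiber of the induced surface-bundle structure $V\setminus\mathrm{int}\,N(K)\to S^1$.

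The main step is to show that $\Sigma_V$ is an annulus: once this is known, $\Sigma_V$ realizes an isotopy of $K$ onto the longitude $c$, and since $c$ is isotopic to the core of $V$ on $T$, it follows that $K$ is isotopic to the core of $V$. To establish this I plan to use that Dehn filling $V\setminus\mathrm{int}\,N(K)$ along the meridian slope of $K$ on $\partial N(K)$ recovers the solid torus $V$, whose fundamental group is $\Z$. The mapping-torus description $\pi_1(V\setminus\mathrm{int}\,N(K))\cong\pi_1(\Sigma_V)\rtimes_{\phi_*}\Z$, together with an analysis of where the meridian of $K$ sits in this group, should force $g(\Sigma_V)=0$: if $g(\Sigma_V)\geqslant 1$, then $\pi_1(\Sigma_V)$ is free of rank $\geqslant 3$, and the single relation added by the meridional Dehn filling cannot collapse $\pi_1$ down to $\Z$. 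The main obstacle I anticipate is carrying out this Dehn-filling/rank step rigorously, in particular identifying the precise slope of the meridian inside the mapping torus so as to verify that a single additional relation really does fall short of reducing the group to $\Z$.
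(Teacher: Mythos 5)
Your part (1) is correct, and it takes a genuinely different (and cleaner) route than the paper: you argue algebraically that if all winding numbers vanish then $\pi_1(T)\cong\Z\times\Z$ injects (by incompressibility of $T$) into the kernel of $\pi_1(S^3\setminus L)\to\Z$, i.e.\ into the free group $\pi_1(F)$, which is impossible; the paper instead splits off annuli of $F\cap T$ to find a fiber $F_t$ disjoint from $T$ and gets a contradiction from the fact that $F_t\times I$ is a handlebody containing no essential torus. Note also that under the hypothesis one has $[p|_T](\lambda_T)=\sum_i w_i\, lk(\lambda_T,c)=0$ as well (where $c$ is the core of $V$), so your second case is vacuous and the somewhat informal ``cancelling annulus'' step is never needed; in any event the homological contradiction you give there does not depend on it.

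Part (2), however, is not yet a proof. First, the assertion that $\Sigma_V=F\cap(V\setminus\mathrm{int}\,N(K))$ is the fiber of an induced bundle structure on $V\setminus\mathrm{int}\,N(K)$ does not follow from merely minimizing $F\cap T$: one must show that the single curve $c=F\cap T$ is invariant under the monodromy and then isotope $T$ to be compatible with the fibration; the paper does exactly this, using that $T$ cut along $c$ is an essential annulus in $F\times(0,1)$ running from $F\times\lbrace 0\rbrace$ to $F\times\lbrace 1\rbrace$. Second, and more seriously, the main step---that $\Sigma_V$ is an annulus---is precisely what you leave open, and the rank-counting strategy you sketch cannot work as stated: meridional filling of $V\setminus N(K)$ returns $V$ for \emph{every} pattern $K$, and a single relation can collapse a free-by-cyclic group of large fiber rank (for instance the $(2,5)$-torus-knot group is $F_4\rtimes\Z$ and its meridional filling has trivial fundamental group), so ``one relation cannot reduce the group to $\Z$'' is false as an abstract principle. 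Making it work would require exploiting the precise position of the meridian relative to the fibration and the second boundary component on $T$, which is essentially the content of the result the paper quotes here, namely Corollary 1 of Hirasawa--Murasugi--Silver \cite{HiraMuraSilver}: a pattern of winding number one that fibers in the solid torus is isotopic to the core. As written, your treatment of (2) is a plan with an acknowledged missing step rather than a proof.
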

Though this statement is fairly classical in the context of fibered knots \cite{HiraMuraSilver}, we include a proof as we are working with fibered links.
\begin{proof} The complement $M_L=S^3\setminus N(L)$ fibers over $S^1$ with fiber a surface $(F, \partial F)\subset (M_L, \partial M_L)$. Then $S^3\setminus L$ cut along $F=F\times \{0\}=F\times \{1\}$ is homeomorphic to $F \times [0,1].$ 
It is known that $F$ maximizes the Euler characteristic in its homology class in $H_2(M_L, \partial M_L)$ and thus $F$ is incompressible and $\partial$-incompressible.
\smallskip

(1) Assume that the winding number of every component of $L$  is zero, and consider the intersection of $F$ with $T,$ the boundary of the solid torus containing $L.$  Since $F \times [0,1]$ is irreducible, and $F$ is incompressible in the complement of $L$, up to isotopy, one can assume 
that the intersection $T\cap F$ consists of a collection of parallel curves in $T,$  each of which is homotopically essential in $T.$
The hypothesis on the winding number implies that
the intersection $F\cap T$ is null-homologous in $T,$ where each component of  $F\cap T$  is given the orientation inherited by the surface $V\cap F.$
Thus the curves in $F\cap T$ can be partitioned in pairs 
of  parallel curves with opposite orientations in $T \cap F$. Each such pair bounds an annulus in $T$ and 
in $F \times (0,1)$ each of these annuli has both ends on $F \times \lbrace 0 \rbrace$ or on $F \times \lbrace 1 \rbrace.$  This implies that we can find $0<t<1$ such that
$F_t=F\times \{t\}$ misses the torus $T$. This in turn implies that  $T$ must be an essential torus in the manifold obtained by cutting $S^3\setminus L$ along the fiber $F_t$.
But this is impossible since the later manifold is $F_t\times I$ which is a handlebody and cannot contain essential tori; contradiction.

\smallskip

(2) By an argument similar to that used in case (1) above, we can simplify the intersection of the fiber surface $F$ with $T$ until it consists of one curve only. This curve, say  $\gamma$, cuts $T$ into an essential annulus embedded in $F \times (0,1)$ with one boundary component on $F \times \lbrace 0 \rbrace$ and the other on $F \times \lbrace 1 \rbrace.$ As the annulus closes up, the curve $\gamma$ must be fixed by the monodromy of the fibration and one can isotope $T$ to make it compatible with the fibration. Then one has that $K$ is fibered in $V,$ and as the winding number of $K$ is $1,$ by Corollary 1 in \cite{HiraMuraSilver}, $K$ must be isotopic to the core of $V.$ 
\end{proof}

We are now ready to  give the proof of Proposition \ref{prop:hyperbolic}.

\begin{proof}[Proof of Proposition \ref{prop:hyperbolic}] First we remark that $S^3 \setminus (L \cup K)$ is non-split as $S^3 \setminus L$ is and $K$ represents a non-trivial element in $\pi_1(S^3 \setminus L).$

Next we argue that $S^3 \setminus (L \cup K)$ is atoroidal:  Assume that we have an essential torus, say  $T,$ in $M_{L\cup K}=S^3 \setminus (L \cup K).$ Since $L$ is hyperbolic, in $M_L=S^3 \setminus L$
the torus $T$  becomes either boundary parallel or compressible. Moreover, the torus $T$ bounds a solid torus $V$ in $S^3.$

Suppose that $T$ becomes boundary parallel in the complement of $L$. Then, we may assume that $V$ is a tubular neighborhood of a component $L_i$ of $V.$
Then $K$ must lie inside $V$; for otherwise $T$ would still be boundary parallel in $M_{L\cup K}$.
Then the free homotopy class $[K]$ would represent a conjugacy class in the peripheral subgroup of $\pi_1(M_L)$ corresponding to $L_i.$ However this contradicts condition $(\clubsuit);$ thus this case cannot happen.

Suppose now that we know that $T$ becomes compressible in $M_L$.
In $S^3,$  the torus $T$ bounds a solid torus $V$ that contains a compressing disk of $T$ in $M_L.$ If $V$ contains no component of $L \cup K,$ the torus $T$ is still compressible in $M_{L\cup K}$. Otherwise, there are again two cases: 
 \smallskip
 
 {\it  Case 1:}  The solid torus $V$ contains some components of $L.$  We claim that $V$ actually contains all the components of $L.$ Otherwise, after compressing $T$ in $M_L,$ one would get a sphere that separates the components of $L,$ which can not happen as $L$ is non-split. Moreover, as the compressing disk is inside $V,$ all components of $L$ have winding number zero in $V.$

Since $T$ is incompressible in the complement of $M_{L\cup K},$ the component $K$ must also lie inside $V.$
Note that $V$ has to be knotted since otherwise $T$ would compress outside $V$ and thus in $M_{L\cup K}$.  But then since $K$ is unknotted, it must have winding number zero in $V.$
Thus we have the fibered link $L\cup K$ lying inside $V$ so that each component has winding number zero. But then $T$ can not be incompressible in $M_{L\cup K}$ by Lemma \ref{lem:windingnumber}-(1); contradiction. Thus this case will not happen.

\smallskip

 {\it  Case 2:}   The solid torus $V$ contains only $K.$  Since $T$ is incompressible in $M_{L\cup K},$  $K$ must be geometrically essential in $V;$ that is it doesn't lie
 in a 3-ball inside $V.$ Since $K$ is unknotted, it follows that $V$ is unknotted.
 For each component $L_i$ of $L,$ we have
 $$lk(K,L_i)=w\cdot  lk(c,L_i),$$
where $c$ is the core of $V,$ 
 and $w$ denote the winding number of $K$ in $V.$  Since $K$ satisfies condition $(\clubsuit),$ we know that 
  $$gcd\left(lk(K,L_1),lk(K,L_2),\ldots ,lk(K,L_n)\right)=1,$$
which implies that we must  have  $w=1.$
  Thus by Lemma \ref{lem:windingnumber}-(2), $K$ is isotopic to the core of $V$ and $T$ is boundary parallel, contradicting the assumption that $T$ is essential in $M_{L\cup K}.$
This finishes the proof that  $M_{L\cup K}$ is atoroidal.

 Since $M_{L\cup K}$ contains no essential spheres or tori, and has toroidal boundary, it is either a Seifert  fibered space or a hyperbolic manifold. But $M_L$ is a Dehn-filling of $M_{L\cup K}$ which is hyperbolic. Since the Gromov norm $||\cdot ||$  does not increase under Dehn filling \cite{thurston:notes} we get  $||M_{L\cup K}||\geq ||M_L||>0.$ The Gromov norm of Seifert 3-manifolds is zero, thus $L \cup K$ must be hyperbolic.
\end{proof}

\smallskip

We can now finish the proof of Theorem \ref{mainofsection} and the proof of Theorem \ref{hyperbgeneral} stated in the Introduction.

\begin{proof}[Proof of Theorem  \ref{mainofsection}]  Let $L$ be any link.
If $L$ is not hyperbolic, then we can find a hyperbolic link $L',$ that contains $L$ as a sub-link. See for example \cite{Baker}. If $L$ is hyperbolic then set $L=L'.$ 
Then apply 
Proposition \ref{prop:condition} to $L'$ to get  links $L'\cup K$ that are closed homogeneous braids with arbitrarily high crossing numbers and fixed braid index.
By \cite{Stallings}, the links $L'\cup K$  are fibered and  the fibers have arbitrarily large genus and by Proposition  \ref{prop:hyperbolic}
they are hyperbolic.
\end{proof}

\begin{proof}[Proof of Theorem    \ref{hyperbgeneral}] Suppose that $L$ is a link with $lTV(S^3\setminus L)>0$. By Theorem  \ref{mainofsection} we have fibered hyperbolic links $L'$ that contain $L$ as sublink and whose fibers have
arbitrarily large genus. By Theorem \ref{thm:ltvdehnfilling} we have $lTV(S^3\setminus L)>0$. \end{proof}

\section{Stallings twists and the AMU conjecture}
\label{sec:examples}
By our results in the previous sections, starting from a hyperbolic link $L\subset S^3$ with $lTV(S^3\setminus L) >0,$ one can add  an unknotted component $K$ to obtain a hyperbolic fibered link  $L\cup K,$
with $lTV(S^3\setminus (L\cup K)) >0.$
 The monodromy of a fibration of $L\cup K$ provides a  pseudo-Anosov mapping class
on the surface $\Sigma=\Sigma_{g,n},$  where $g$ is the genus of the fiber and $n$ is the number of components of $L\cup K.$

One can always increase the number of boundary components $n$ by adding more components to $L$ and appealing to  Theorem \ref{thm:ltvdehnfilling}. However since $L\cup K$ is a closed homogeneous braid this construction alone will not provide infinite families of examples for fixed genus and number of boundary components.

In this section we show how to address this problem and prove Theorem \ref{general} stated in the introduction and which, for the convenience of the reader we restate here.

\begin{named}{Theorem \ref{general}} Let $\Sigma$ denote an orientable surface of genus $g$ and with $n$-boundary components. Suppose that either $n=2$ and $g\geqslant 3$ or $g\geqslant n \geqslant 3.$
Then  there are are infinitely many non-conjugate pseudo-Anosov mapping classes in 
$\mathrm{Mod}(\Sigma)$ that satisfy the AMU conjecture.
\end{named}

\subsection{Stallings twists and pseudo-Anosov mappings}

Stallings \cite{Stallings}  introduced an operation that transforms a fibered link into a fibered link with a fiber of the same genus:
 Let $L$ be a fibered link with 
fiber $F$ and let $c$ be a simple closed curve on $F$ that is unknotted in $S^3$ and such that $lk(c,c^+)=0,$ where $c^+$ is the curve $c$ pushed along the normal of $F$ in the positive direction.
The curve $c$ bounds a disk  $D\subset S^3$ that is transverse to $F$. Let $L_m$ denote the link obtained from $L$ by a full twist of order $m$ along $D.$ This operation is known as Stallings twist of order $m.$
 Alternatively, one can think the Stallings twist operation as performing $1/m$ surgery on $c,$ where the framing of $c$ is induced by the normal vector on $F.$

\begin{theorem} {\rm { (\cite[Theorem 4]{Stallings})}}\label{twist}  Let $L$ be a link whose complement fibers over $S^1$ with fiber $F$ and monodromy $f$.
Let $L_m$ denote a link obtained by a Stallings twist of order $m$ along a curve $c$ on $F$. Then, the complement of $L_m$ fibers over $S^1$ with fiber $F$
and the monodromy is $f \circ \tau_c^m,$ where $\tau_c$ is the Dehn-twist on $F$ along $c.$ 
\end{theorem}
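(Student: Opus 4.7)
The plan is to interpret the Stallings twist as a Dehn surgery along $c$ and then compare this surgery with a direct modification of the mapping torus structure. Throughout, I would work with the exterior $M_L = S^3 \setminus N(L)$ and the identification $M_L \cong F \times [0,1]/(x,1) \sim (f(x),0)$.

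The first ingredient is to reformulate the twist as surgery. A full twist of order $m$ along the disk $D$ is, by definition, $1/m$-Dehn surgery on $c = \partial D$ using the $0$-framing witnessed by $D$. The hypothesis $lk(c, c^+) = 0$ says that pushing $c$ off along the normal of $F$ produces a curve that is isotopic to $c$ with respect to the disk framing, so the surface framing of $c$ coincides with the disk framing. Hence $L_m$ is obtained from $L$ by $1/m$-surgery on $c$ using the surface framing, and the ambient $S^3$ is preserved because $c$ is unknotted.

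Next I would describe $c$ inside the fibration and identify the surgery with a cut-and-reglue operation. Up to isotopy, $c$ can be taken to lie on the distinguished fiber $F_0 = F \times \{0\}$. An annular neighborhood $A \subset F$ of $c$, thickened along the $[0,1]$-direction, produces a solid torus neighborhood $N(c) \cong S^1 \times D^2$ whose meridian is the boundary of a small disk transverse to $c$ inside a fiber and whose surface longitude is $c$ pushed along the base direction. A direct local model in $N(c)$ shows that $1/m$-surgery on $c$ with the surface framing has the same effect on $M_L$ as cutting along $F_0$ and regluing the two resulting copies of $F$ by $\tau_c^m$: outside $N(c)$ the Dehn twist $\tau_c^m$ is isotopic to the identity, so both operations leave $M_L$ unchanged there, while inside $N(c)$ the two gluings agree thanks to the choice of framing.

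The conclusion is then immediate: cutting the mapping torus of $f$ along $F_0$ and regluing by $\tau_c^m$ produces the mapping torus of $f \circ \tau_c^m$ with the same fiber surface $F$, and tracing the loop around $S^1$ starting at $F_0$ gives monodromy $f \circ \tau_c^m$ (applying $\tau_c^m$ first at the cut, then $f$ when closing the torus). The main obstacle is the local surgery-vs-twist identification, where the disk and surface framing conventions must be matched carefully against the longitude/meridian coordinates on $\partial N(c)$; once that verification is in hand, the rest is bookkeeping with the mapping-torus description.
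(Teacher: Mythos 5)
Your argument is correct and is essentially the standard one: the paper itself gives no proof of this statement (it is quoted from Stallings), but the identification you use -- that the order-$m$ twist is $1/m$-surgery on $c$ with the fiber framing (which equals the disk framing since $lk(c,c^+)=0$, so the ambient manifold stays $S^3$), and that such surgery amounts to cutting the mapping torus along a fiber and regluing by $\tau_c^m$ -- is exactly the mechanism the paper invokes later, e.g.\ in its sketch of Theorem \ref{thm:LongMorton}, where $M_{f\circ\tau_c^m}$ is described as $1/m$-surgery on $c\subset F\times\{1/2\}$ with the surface framing. The only points to pin down are the framing/sign conventions in the local model (and the immaterial distinction between $f\circ\tau_c^m$ and $\tau_c^m\circ f$, which give homeomorphic mapping tori), so your proposal is a faithful reconstruction of the cited proof.
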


Note that when $c$ is parallel to a component of $L,$ then such an operation does not change the homeomorphism class of the link complement; we call these Stallings twists trivial.

To facilitate the identification of non-trivial Stallings twists on link fibers, we recall the notion of {\emph {state graphs}}:

  Recall that the fiber for the complement of a homogeneous closed braid ${\hat{\sigma}}$ is obtained as follows: Resolve all the crossings in the projection of ${\hat{\sigma}}$ in a way consistent with the braid orientation.
The result is a collection of nested embedded circles (Seifert circles) each bounding a disk on the projection plane; the disks can be made disjoint by pushing them slightly above the projection plane. Then we construct the fiber $F$ by attaching a half twisted band for each crossing.
The state graph consists of the collection of the Seifert circles together with an edge for each crossing of ${\hat{\sigma}}.$  We will label each edge by $A$ or $B$ according to whether 
the resolution of the corresponding crossing during the construction of $F$ is of type $A$ or $B$ shown in  Figure \ref{fig:resolutions}, if viewed as unoriented resolution.

\begin{figure}
   \centering  
  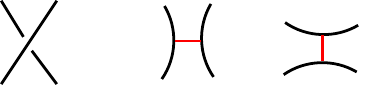
  \caption{A crossing and its $A$ and $B$ resolutions.}
  \label{fig:resolutions}
\end{figure}
\begin{figure}
   \centering  
  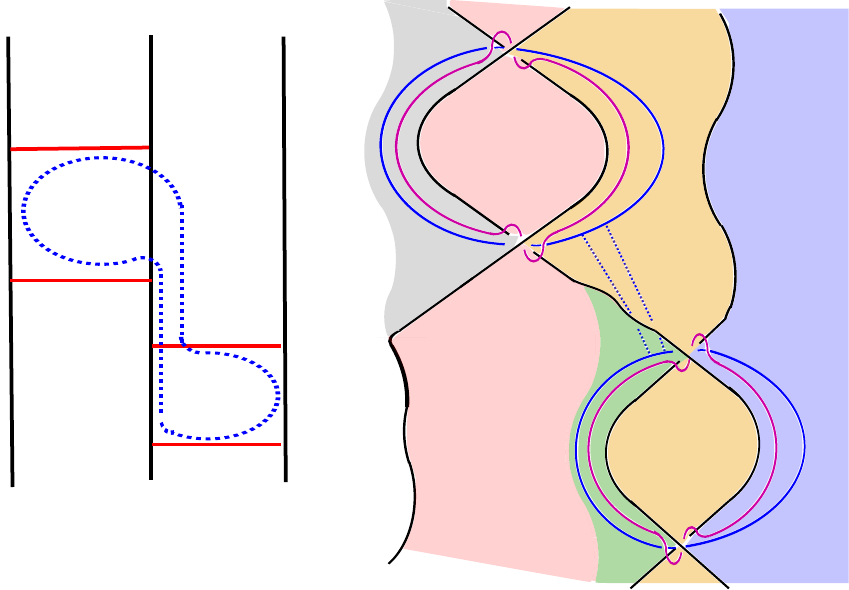
  \caption{Left: Pattern in the state graph exhibiting  a non-trivial  Stallings twist. Right: The curve $c$ obtained as a connected sum the curves $c_1, c_2$ with self linking $+2$ and $-2$. }
  \label{fig:stallingstwist}
\end{figure}

\begin{remark}\label{locate} {\rm As the homogeneous braids get more complicated the fiber is  more likely to admit a non-trivial Stallings twist. Indeed, if the state graph of $L={\hat{\sigma}}$ exhibits the  local pattern  
shown in the left hand side of 
 Figure \ref{fig:stallingstwist}, we can perform a non-trivial Stallings twist along the curve $c$ which corresponds to the connected sum of the two curves $c_1$ and $c_2$ shown in the Figure. We can see that $lk(c_1,c_1^+)=+2$ and $lk(c_2,c_2^+)=-2,$ and the mixed linkings are zero. In the end, $lk(c,c+)=2-2=0.$}
 \end{remark}

We will need the following theorem, stated and proved  by Long and Morton \cite{LongMorton} for closed surfaces. Here we state the bounded version and for completeness we sketch the slight adaptation of their argument in this setting.
 
 \begin{theorem} {\rm {(\cite[Theorem A]{LongMorton})}}\ \label{thm:LongMorton}Let $F$ be a compact oriented surface with $\partial F\neq 0.$  Let  $f$ be 
 a pseudo-Anosov homeomorphism on $F$ and let $c$ be a non-trivial, non-boundary parallel simple closed curve on $F.$ 
 Let $\tau_{c}$ denote the Dehn-twist along $c.$ Then, the family $\{f \circ \tau_{c}^m\}_m$  contains infinitely  many non-conjugate pseudo-Anosov homeomorphisms.
 \end{theorem}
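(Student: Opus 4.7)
The plan is to carry out the Long–Morton argument in the bounded setting, where the essential inputs are Thurston's hyperbolic Dehn surgery theorem and Mostow rigidity, together with the interpretation of Stallings twists as Dehn fillings supplied by Theorem \ref{twist}.

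First I would set up the drilled mapping torus. Since $f$ is pseudo-Anosov on $F$, Thurston's fibration theorem says the mapping torus $M_f$ is hyperbolic with finite volume and toroidal boundary (one torus per component of $\partial F$). Viewing $c \subset F = F \times \{0\} \subset M_f$, let $N := M_f \smallsetminus c$ be the open manifold obtained by removing $c$. The hypothesis that $c$ is essential and non-boundary-parallel on $F$, combined with the fact that the fiber $F$ is incompressible and boundary-incompressible in $M_f,$ implies that $c$ is not homotopic into $\partial M_f$ and not null-homotopic in $M_f$; hence $N$ is irreducible, atoroidal and acylindrical, so by Thurston's uniformization for Haken manifolds it is hyperbolic and finite-volume.

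Next I would identify the family $\{M_{f \circ \tau_c^m}\}_{m \in \mathbb{Z}}$ as a sequence of Dehn fillings of $N$ along the cusp produced by $c$. By Theorem \ref{twist}, performing a $1/m$-surgery along $c$ (with respect to the framing induced by the normal to $F$) produces a manifold fibering over $S^1$ with the same fiber $F$ and monodromy $f \circ \tau_c^m$; equivalently, $M_{f \circ \tau_c^m}$ is the $1/m$-filling of $N$ along the torus coming from $c$. Thurston's hyperbolic Dehn surgery theorem then yields an $m_0$ such that for every $|m| \geqslant m_0$ the filled manifold $M_{f \circ \tau_c^m}$ is hyperbolic, with volumes satisfying
\[
\mathrm{vol}(M_{f \circ \tau_c^m}) < \mathrm{vol}(N), \qquad \mathrm{vol}(M_{f \circ \tau_c^m}) \longrightarrow \mathrm{vol}(N) \text{ as } |m| \to \infty.
\]
In particular, by Thurston's characterization of pseudo-Anosov monodromies, $f \circ \tau_c^m$ is pseudo-Anosov for all but finitely many $m$.

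Now I would extract infinitely many conjugacy classes. The sequence $\{\mathrm{vol}(M_{f \circ \tau_c^m})\}_m$ converges to $\mathrm{vol}(N)$ from strictly below, so it takes infinitely many distinct values; hence infinitely many of the mapping tori $M_{f \circ \tau_c^m}$ have pairwise distinct volumes, and by Mostow rigidity these are pairwise non-homeomorphic. Since conjugate mapping classes in $\mathrm{Mod}(F)$ (with boundary fixed pointwise, reduced to the action on $F$ up to isotopy) produce homeomorphic mapping tori, the associated pseudo-Anosov classes $f \circ \tau_c^m$ must lie in infinitely many distinct conjugacy classes, proving the theorem.

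The main obstacle is the first step: verifying that $N = M_f \smallsetminus c$ is genuinely hyperbolic in the bounded-fiber setting, i.e.\ that drilling the essential non-peripheral curve $c$ on the fiber neither creates an essential sphere/torus/annulus nor makes the curve $c$ peripheral in $M_f$. Once this is in place, Thurston's Dehn surgery theorem plus Mostow rigidity deliver the conclusion almost mechanically, exactly as in the closed case of Long and Morton.
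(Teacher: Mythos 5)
There is a genuine gap, and it sits exactly where you yourself flag ``the main obstacle'': you never actually prove that $N=M_f\setminus c$ is hyperbolic. Your sentence asserting that, because $c$ is non-trivial and non-boundary-parallel on the fiber, the drilled manifold is ``irreducible, atoroidal and acylindrical'' is not a deduction --- atoroidality of $M_f\setminus c$ does not follow from the non-triviality and non-peripherality of $c$ alone, and it is the one place where the pseudo-Anosov hypothesis on $f$ must be used beyond guaranteeing that $M_f$ is hyperbolic. An essential torus in $M_f\setminus c$ could a priori be assembled from annuli in $F\times[0,1]$ running between parallel copies of $c$ and of its images under the monodromy; ruling this out requires knowing that $f^k(c)$ and $f^l(c)$ are freely homotopic in $F$ only when $k=l$, which is where pseudo-Anosovness enters. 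The paper's proof is devoted almost entirely to this point: it puts a candidate essential torus $T$ in general position with respect to the fiber $F\times\{1/2\}$, shows $T\cap(F\times[0,1])$ is a union of horizontal and vertical annuli, removes horizontal annuli by isotopy except those linking $c$, excludes annuli joining $c$ to $f(c)$ using the pseudo-Anosov property, and concludes that $T$ is boundary-parallel in $M_f\setminus c$; it then excludes the Seifert-fibered alternative by a Gromov-norm comparison $\|M_f\setminus c\|\geqslant\|M_f\|>0$ rather than by claiming acylindricity. Note also that one cannot shortcut this by citing ``complements of geodesics in hyperbolic manifolds are hyperbolic,'' since $c$ is only freely homotopic, not obviously isotopic, to a geodesic, and free homotopy does not preserve the complement.

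Everything after that step in your proposal --- the identification of $M_{f\circ\tau_c^m}$ with the $1/m$-filling of $N$ along the cusp of $c$ with the fiber framing, Thurston's hyperbolic Dehn surgery theorem, the strictly increasing volumes, Mostow rigidity, and the observation that conjugate monodromies yield homeomorphic mapping tori --- coincides with the paper's argument and is fine. To complete the proof you need to supply the missing atoroidality (and irreducibility/non-Seifert) argument, either along the lines of the fiber-intersection analysis above or by some other method that genuinely invokes the pseudo-Anosov hypothesis.
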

\begin{proof} 
 The proof rests on the fact that the mapping torus of $f_m = f \circ \tau_{c}^m$ is obtained from $M_f$ by performing 
$1/m$-surgery on the curve $c$ with framing induced by a normal vector in $F.$ Once we prove that $M_f \setminus c$ is hyperbolic,
Thurston's hyperbolic Dehn surgery theorem implies, for $m$  big enough, that
the mapping tori $M_{f_{m}}$ are hyperbolic and all pairwise non-homeomorphic (as their hyperbolic volumes differ). Since conjugate maps have homeomorphic mapping tori the non-finiteness statement follows.

We will consider the curve $c$ as embedded on the fiber $F \times \lbrace 1/2 \rbrace \subset M_f.$ 
Notice that $M_f \setminus c$ is irreducible, as $c$ is non-trivial in $\pi_1(F)$ and thus in $\pi_1(M_f).$

 We need to show that $M_f \setminus c$ contains no essential embedded tori: Let $T$ be a torus embedded in $M_f \setminus c.$
 If $T$ is boundary parallel in $M_f,$ it will also be in $M_f \setminus c,$ otherwise one would be able to isotope $c$ onto the boundary of $M_f,$ and as $c$ is actually a curve on $F \times \lbrace 1/2 \rbrace,$ $c$ would be conjugate in 
 $\pi_1(F)$ to a boundary component. As we chose $c$ non-boundary parallel in $F,$ this does not happen.

Now, assume that $T$ is non-boundary parallel in $M_f.$ 
Then we can put $T$ in general position and consider of $T\cap F\times \lbrace 1/2 \rbrace.$ If this intersection is empty then $T$ is compressible as $F \times [0, 1]$ does not contain any essential tori. After  isotopy we can assume  that $T \cap F \times [0, 1]$ is a collection of properly embedded annuli in $ F \times [0, 1],$
each of which either misses a fiber $F$ or is vertical with respect to the $I$-bundle structure.
Now note that if one of these annuli misses a fiber then we can remove it by isotopy in $M_f \setminus c,$  unless if it connects to curves parallel to $c$ on opposite sides of $c$ on 
$ F\times \lbrace 1/2 \rbrace.$  Also observe that we cannot have annuli that connect a non-boundary parallel curve $c \subset F\times \lbrace 1/2 \rbrace$
to $f(c):$  For, since $f$ is pseudo-Anosov,  the curves 
$f^k (c)$ and $f^l (c)$ are freely homotopic on the fiber  if and only if $k=l;$ and thus the annuli would never close up to give $T.$
In the end, and since $M_f$ is hyperbolic, 
 we are left with two annuli connecting both sides of $c$ and $T$ is boundary parallel in $M_f \setminus c.$
\\ Finally, $M_f\setminus c$ is irreducible and atoroidal and since its Gromov norm  satisfies $||M_f \setminus c||>||M_f||>0,$  it is hyperbolic.\end{proof}

\subsection{Infinite families of mapping classes} We are now ready to present our examples of infinite families of non-conjugate pseudo-Anosov mapping classes of fixed surfaces that satisfy the AMU conjecture.
The following theorem gives the general process of the construction.
\begin{theorem} \label{infinitegen} Let $L$ be a hyperbolic fibered link with fiber $\Sigma$  and monodromy $f.$ Suppose that
$L$ contains a sublink $K$ with $lTV(S^3\setminus K)>0.$ Suppose, moreover, that the fiber $\Sigma$ admits a non-trivial Stallings twist along a curve $c\subset \Sigma$ such that the interior of the twisting disc $D$ intersects $K$ at most once geometrically. Let $\tau_c$ denote the Dehn twist of $\Sigma$ along $c$.
Then the family  $\{f \circ \tau_{c}^m\}_m$  of homeomorphisms gives   infinitely many non-conjugate pseudo-Anosov  mappings classes in $\mathrm{Mod}(\Sigma)$ that satisfy the AMU conjecture.
\end{theorem}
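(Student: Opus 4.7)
The plan is to combine the Long--Morton Theorem \ref{thm:LongMorton} with the monotonicity of $lTV$ under Dehn filling (Theorem \ref{thm:ltvdehnfilling}) and Theorem \ref{amu-ltv}. First I would note that since $L$ is hyperbolic and fibered with monodromy $f$, Thurston's theorem on fibered $3$-manifolds implies $f$ is pseudo-Anosov. The non-triviality of the Stallings twist along $c$ forces $c$ to be essential and non-boundary-parallel on $\Sigma$, so Theorem \ref{thm:LongMorton} applies to the pair $(f,\tau_c)$ and produces infinitely many pairwise non-conjugate pseudo-Anosov mapping classes in the family $\{f_m = f\circ \tau_c^m\}_{m\in\Z}$.

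By Theorem \ref{amu-ltv}, to show that every such $f_m$ satisfies the AMU conjecture it suffices to prove $lTV(M_{f_m})>0$. By Stallings's Theorem \ref{twist}, $M_{f_m}$ is homeomorphic to $S^3\setminus L_m$, where $L_m$ is the link obtained from $L$ by a Stallings twist of order $m$ along $c$. The key step in the plan is to show that the sublink $K\subset L$ persists, up to ambient isotopy in $S^3$, as a sublink of $L_m$. If $D\cap K=\emptyset$ this is clear: the twist is supported in a tubular neighborhood of $D$, which can be chosen disjoint from $K$, so $K$ is fixed pointwise. If $D\cap K$ consists of a single transverse point, then only a single strand of $K$ enters the support $D\times[-1,1]$ of the twist, meeting it as an arc $\alpha$; its image $\alpha'$ under the twist has the same endpoints on $\partial(D\times[-1,1])$, and both are properly embedded unknotted arcs in a $3$-ball, hence isotopic rel endpoints. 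Patching this local isotopy with the identity outside yields an ambient isotopy in $S^3$ carrying $K$ to its image inside $L_m$.

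Once $K$ (or an isotopic copy) is a sublink of $L_m$, the complement $S^3\setminus K$ is obtained from $M_{f_m}=S^3\setminus L_m$ by Dehn fillings along meridians of the components of $L_m\setminus K$. Theorem \ref{thm:ltvdehnfilling} then gives
$$lTV(M_{f_m})\geqslant lTV(S^3\setminus K)>0,$$
and Theorem \ref{amu-ltv} completes the verification of the AMU conjecture for each $f_m$ in the infinite non-conjugate pseudo-Anosov family produced by Long--Morton.

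The main obstacle I foresee is justifying cleanly the persistence of $K$ as a sublink when $|D\cap K|=1$; this is where the hypothesis that the geometric intersection number is at most one plays its essential role, since a second strand passing through $D$ would be braided with the first by the twist and could change the isotopy class of $K$, potentially breaking the transfer of the lower bound on $lTV$. Once this local isotopy argument is in place, everything else is a direct application of theorems already established in the paper.
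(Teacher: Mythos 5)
Your proposal is correct and follows essentially the same route as the paper: persistence of $K$ as a sublink under the twist (this is exactly where the ``at most once geometrically'' hypothesis is used), monotonicity of $lTV$ under Dehn filling via Theorem \ref{thm:ltvdehnfilling}, and then Theorems \ref{twist}, \ref{thm:LongMorton} and \ref{amu-ltv}. The only difference is that you spell out the local rel-endpoints isotopy argument for the single strand through the twisting disc, which the paper leaves implicit.
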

\begin{proof} Since $L$  contains $K$ as sublink we have $lTV(S^3\setminus L)\geqslant lTV(S^3\setminus K)>0.$ 
Since $D$ intersects $K$ at most once, each of the links $L'$ obtained by Stalling twists along $c$,  will also contain a sublink isotopic to $K$ and hence  $lTV(S^3\setminus L')\geqslant lTV(S^3\setminus K)>0.$ 
The conclusion follows by Theorems \ref{twist}, \ref{thm:LongMorton} and \ref{amu-ltv}.
\end{proof}

We finish the section with concrete constructions of infinite families obtained by applying Theorem \ref{infinitegen}. Start with $K_1=4_1$ represented as the closure of the homogeneous braid
$\sigma_2^{-1} \sigma_1 \sigma_2^{-1} \sigma_1.$
We construction a 2-parameter family of links 
$L_{n,m}$ where $n\geqslant 2$, $m\geqslant 1,$ defined as follows:

The link $L_{4,m}$ 
 is shown in the left panel of Figure \ref{fig:example2},  where the box shown contains $2m$ crossings.  It is obtained from $K_1$ by adding three unknotted components.

 The link  $L_{3,m}$  is obtained from $L_{4,m}$  by removing the unknotted component corresponding to the outermost string of the braid. 
 
 The link $L_{n+1,m}$ for $n\geqslant 4$ is obtained from $L_{n,m}$ adding one strand in the following way: denote by $K_1,\ldots ,K_n$ the components of $L_{n,m}$ from innermost to outermost, $K_1$ being the $4_1$ component.
 To get $L_{n+1,m},$ we add one strand $K_{n+1}$ to $L_{n,m}$, so that traveling along $K_n$ one finds $2$ crossings with $K_{n-1},$ then $2$ crossings with $K_{n+1},$ then $2$ crossings with $K_{n-1},$ then $2$ crossings with $K_{n+1},$ and, moreover, the crossings with $K_{n-1}$ and $K_{n+1}$ have opposite signs. There is only one way to chose this new strand, and doing so we added one unknotted component to $L_{n,m},$ thus $L_{n+1,m}$ has $n+1$ components and $4$ more crossings than $L_{n,m}.$  
 \\ In the special case $n=2,$ the link $L_{2,m}$ is obtained from the link $L_{3,m}$ by replacing the box with $2m$ crossings with a box with $2m-1$ crossings. The links $L_{2,m}$ are then $2$-components links. We note that all the links $L_{n,m}$ contain the component $K_1$ we started with.

\begin{figure}
\centering
  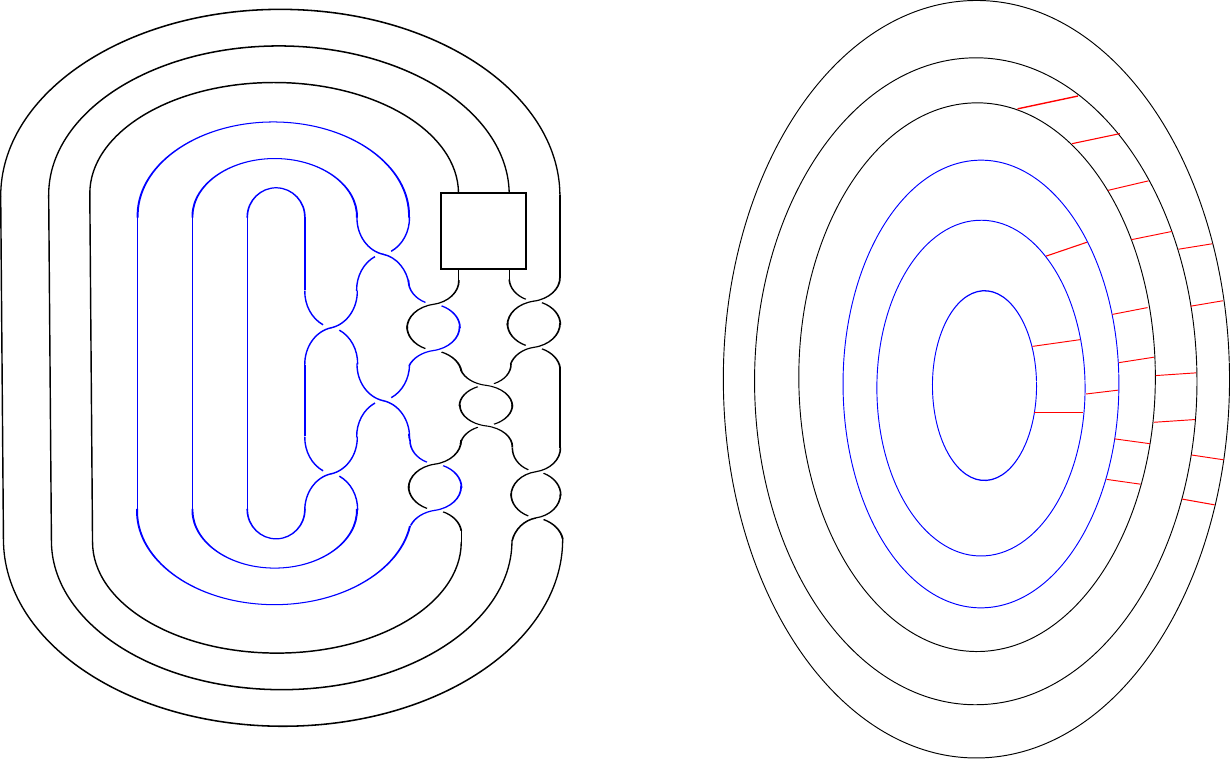
  \caption{The link $L_{4,m}$ and the  state graph for $L_{4,2}.$}
  \label{fig:example2}
\end{figure} 
\begin{proposition}\label{prop:example} The link
$L_{n,m}$ is hyperbolic, fibered and satisfies the hypotheses of Theorem \ref{infinitegen}.
The fiber has genus $g=m+2$ if $n=2,$
 $g=n+m-1$ otherwise.
\end{proposition}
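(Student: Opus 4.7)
The plan is to verify four items in turn, which together let us invoke Theorem \ref{infinitegen}: $L_{n,m}$ is fibered with the claimed fiber genus, it is hyperbolic, it contains a sublink $K$ with $lTV(S^3\setminus K)>0$, and its fiber admits a non-trivial Stallings twist curve whose disk meets $K$ at most once.

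\textbf{Fiberedness and genus.}  The inductive recipe for $L_{n,m}$ is arranged so that each new strand $K_{n+1}$ is inserted with its crossings with $K_{n-1}$ and $K_{n+1}$ of opposite signs, which is precisely what is needed to keep the resulting closed braid homogeneous.  Stallings' theorem \cite{Stallings} then gives that $L_{n,m}$ is fibered with fiber $\Sigma$ the Seifert surface of the closed homogeneous braid projection, and $\chi(\Sigma)=s-c(\sigma)$ where $s$ is the number of strands and $c(\sigma)$ the number of crossings.  A direct count of strands and crossings, handling the cases $n=2$, $n=3$, and $n\geqslant 4$ according to the inductive step, combined with $\chi(\Sigma)=2-2g-k$ for $k$ the number of components, gives the stated genus.

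\textbf{Hyperbolicity.}  This will be done by induction on $n$, starting from the hyperbolic knot $K_1=4_1$.  At each step we add one new unknotted component and apply Proposition \ref{prop:hyperbolic}, so we need to verify that the newly added component $K_{n+1}$ satisfies condition $(\clubsuit)$ relative to $L_{n,m}$.  The gcd condition on linking numbers can be read directly off the diagram: by construction $K_{n+1}$ has nonzero linking with $K_n$, so taking gcd over all components one gets $1$.  The non-peripheral condition is more delicate; it can be handled by observing that $K_{n+1}$ links essentially with more than one component of $L_{n,m}$ once $n\geqslant 3$, so its free homotopy class cannot sit inside any single cusp subgroup, while for the base of the induction one can apply the trace argument from Lemma \ref{lem:traces} as in the proof of Proposition \ref{prop:condition}.

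\textbf{Exponential growth and Stallings twist.}  By construction every $L_{n,m}$ contains the figure-eight knot $K_1=4_1$ as a sublink, so Corollary \ref{positive} gives $lTV(S^3\setminus K_1)>0$ and Theorem \ref{thm:ltvdehnfilling} propagates this to $L_{n,m}$.  For the Stallings twist curve we look at the state graph drawn in the right-hand panel of Figure \ref{fig:example2}: it exhibits precisely the local configuration of two parallel $A$-edges and two parallel $B$-edges described in Remark \ref{locate}, giving a curve $c\subset\Sigma$ with $lk(c,c^+)=0$ that bounds an embedded disk $D\subset S^3$.  The configuration sits in the outer layers of the diagram built up from the homogenization procedure, and by choosing $c$ on strands corresponding to added Stallings components (rather than to the innermost $4_1$), one arranges that $D$ is disjoint from $K_1$, or at worst meets it once geometrically, and that $c$ is non-boundary parallel on $\Sigma$.

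\textbf{Main obstacle.}  The principal technical difficulty is the inductive verification of condition $(\clubsuit)$ together with the precise placement of the twist curve $c$, because both rest on diagrammatic bookkeeping that must be done carefully enough to guarantee (i) that the new strand at each step is not homotopic into a cusp, and (ii) that the twisting disk $D$ can be pushed clear of $K_1$.  Once these checks are in place, all hypotheses of Theorem \ref{infinitegen} are satisfied and the proposition follows.
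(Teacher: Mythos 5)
Your treatment of fiberedness (Stallings' criterion for homogeneous closed braids), of the exponential growth (the $4_1$ sublink plus Theorem \ref{thm:ltvdehnfilling}), and of the Stallings twist curve (the pattern of Remark \ref{locate} read off the state graph of Figure \ref{fig:example2}) matches the paper. The genuine gap is in your hyperbolicity argument. You propose to induct on $n$ using Proposition \ref{prop:hyperbolic}, which requires the newly added component to satisfy condition $(\clubsuit)$, but the links $L_{n,m}$ do not satisfy it in the way you claim. By the construction, the added strand $K_{n+1}$ crosses only $K_n$, in exactly four crossings all of the same sign (the four new crossings are the only new ones, and homogeneity forces a single sign), so $lk(K_{n+1},K_n)=\pm 2$ while $lk(K_{n+1},K_j)=0$ for $j<n$; hence the gcd of the linking numbers is $2$, not $1$, and part (ii) of $(\clubsuit)$ fails. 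Your justification that ``nonzero linking with $K_n$'' yields gcd $1$ is not valid, and your fallback for part (i) --- that $K_{n+1}$ links essentially with more than one component --- is contradicted by the same construction (it meets only $K_n$), and in any case linking with several components would not by itself rule out peripherality. So the inductive route through Proposition \ref{prop:hyperbolic} does not go through for these particular links, and hyperbolicity is left unproved in your write-up.

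The paper avoids all of this: the diagrams of $L_{n,m}$ are alternating, prime and non-split, and are not standard diagrams of $T(2,q)$ torus links, so hyperbolicity is immediate from Menasco's criterion \cite{menasco:primediagram}; alternating also gives homogeneous, hence fibered, for free. (Condition $(\clubsuit)$ and Proposition \ref{prop:hyperbolic} are only needed in the general homogenization construction of Section \ref{sec:homogenize}, where one gets to choose the Stallings component; here the links are fixed and do not satisfy it.) You should replace your inductive hyperbolicity step with the alternating-diagram argument, and also actually carry out the strand/crossing count for the genus rather than asserting it, since that is the only other content of the proposition.
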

\begin{proof} For every $n\geqslant 2$ and any $m\geqslant 1,$ the link $L_{n,m}$ contains  the knot $K_1=4_1$ as sublink  and as said earlier we have $lTV(S^3\setminus K_1)>0.$
Since $L_{n,m}$ is alternating, hyperbolicity follows from Menasco's criterion \cite{menasco:primediagram}: any prime non-split alternating diagram of a link that is not the standard diagram of the $T(2,q)$ torus link,  represents a hyperbolic link.
Since $L_{n,m}$ is represented by an alternating (and thus homogeneous) closed braid, fiberedness follows from Stalling's criterion. 
For $n\geqslant 3,$ the resulting closed braid  diagram 
has  braid index $2+n$ and $2+4n+2m$ crossings. Hence the Euler characteristic is $-3n-2m$ and the genus is $m+n-1,$ as the fiber has $n$ boundary components. In the case $n=2,$ the braid index is $5,$ number of crossings $9+2m,$ thus the Euler characteristic of the fiber is $-4-2m$ and the genus is $m+2.$
 Using Remark \ref{locate} and the state graph given in  Figure \ref{fig:example2} we can easily locate a simple closed curve  $c$ on the fiber with the properties
in the statement of Theorem \ref{infinitegen}.
\end{proof}

Now Proposition  \ref{prop:example} and Theorem \ref{infinitegen} immediately give Theorem \ref{general} stated in the beginning of the section.

\begin{remark} Note that if we restrict ourselves to closed homogeneous braids to get explicit examples of links satisfying the hypotheses of Theorem \ref{infinitegen}, it seems necessary that the genus will grow with the number of components. However, one could consider other methods, that can increase the number of components, while keeping the fiber genus low,  to produce links satisfying the hypotheses of Theorem \ref{infinitegen}. One way would be to take a Murasugi sum of the link $L_{2,m}$ with links of arbitrarily large number of components and whose complement fibers over $S^1$ with fiber of genus $0$.
This should be done so that the Murasugi sum operation leaves the component $4_1\subset L_{2,m}$ unaffected and it produces hyperbolic links. It seems plausible  that combining homogeneous braids and Murasugi sums should give explicit examples of infinite families of mapping classes that satisfy the AMU Conjecture, for all  surfaces $\Sigma$ with genus $g\geqslant 2$ and $n\geqslant 2$ boundary components. 
\end{remark}

\begin{remark}\label {spheres} Our methods also apply to surfaces of genus zero to produce examples of mapping class that satisfy the conclusion of the AMU Conjecture.
Given that such examples were previously known, we just outline an explicit construction without pursuing the details of determining the Nielsen-Thurston types of the resulting mapping classes or discussing how the construction relates to that of Santharoubane \cite{San17}:
Let $\Sigma_{0, n}$ denote the sphere with $n$ holes. A mapping class in $Mod(\Sigma_{0, n+1})$ can be thought as an element in the pure braid group on $n$-strings, say $P_n$.
It is known that for $n>3$,  $P_n$ is a semi-direct product  of a subgroup $W_n$ that is itself a semi direct product of free subgroups of $P_n$ and of $P_3$. See, for example,
\cite[Theorem 1.8]{birman:book}. As result any braid $b\in P_n$ can be uniquely  written as a product $\beta\cdot w$ where 
$\beta\in P_3$ and $w\in W_n$. Now take  any $b=\beta \cdot w$ for which the closure of $\beta\in P_3$ represents the Borromean rings $B$.
The braid $b$ represents an element in $Mod(\Sigma_{0, n+1})$ whose mapping torus is the complement of the link $L_b$ that is the closure of $b$ together with the braid axis.
The link $L_b$ contains $B$ as a sublink and hence $lTV(S^3\setminus L_b)>lTV(S^3\setminus B)>0.$ Thus by Theorem \ref{amu-ltv}, for $r$ big enough,  
there is a choice of colors $c$ of the components of $\Sigma_{0, n+1}$ such that $\rho_{r,c}(\phi)$ has infinite order. 
\end{remark}
\begin{remark} \label{knots}Theorem \ref{infinitegen} leads to constructions of mapping classes on surfaces with at least two boundary components that satisfy the AMU. Furthermore, all these mapping classes are obtained 
as monodromies of fibered links in $S^3$. In \cite{BDKY} we prove the Turaev-Viro  invariants volume conjecture for an infinite family of cusped hyperbolic 3-manifolds. Considering the doubles of these 3-manifolds we obtain an infinite family of closed 3-manifolds $M$ with  $lTV(M)>0$. It is known that every closed 3-manifold contains hyperbolic fibered knots \cite{Soma}.
By Theorem \ref{amu-ltv}, monodromies of such knots provide examples of pseudo-Anosov mappings on surfaces with a single boundary component that satisfy  the AMU conjecture.
\end{remark}


\section{Integrality properties of periodic mapping classes}
\label{sec:integertraces}
In this section we give the proofs of Theorem \ref{thm:integertrace} and Corollary \ref{cor:toruslinks} stated in the Introduction. We also state a conjecture about traces of quantum representations of pseudo-Anosov 
mapping classes and we give some supporting evidence.

\begin{named} {Theorem \ref{thm:integertrace}} Let  $f\in \mathrm{Mod}(\Sigma)$ 
be periodic of order $N.$ For any odd integer $r\geqslant 3$, with $\mathrm{gcd}(r, N)=1$,   we have 
$|\Tr \rho_{r,c}(f)| \in \Z,$
 for any $U_r$-coloring $c$ of $\partial \Sigma,$ and any primitive
 $2r$-root of unity.
\end{named}

\begin{proof} For any choice of a primitive  $2r$-root of unity $\zeta_{2r}$, the traces $\Tr \rho_{r,c}(f)$ lie in the field $ \QQ[\zeta_{2r}]$. Since  the $\Z$ is invariant under the action of the Galois group of the field,
the property $|\Tr \rho_{r,c}(f)| \in \Z,$ 
does not depend on the choice of root of unity to define the TQFT.

In the rest of the proof, for any positive integer $n,$ we write $\zeta_n=e^{\frac{2i\pi}{n}}.$ 

By choosing a lift we can  consider $\rho_{r,c}(f)$ as an element of $\mathrm{Aut}(RT_r(\Sigma,c))$ instead of $\mathbb{P}\mathrm{Aut}(RT_r(\Sigma,c)).$ 
Since $f^N=id$ and $\rho_{r,c}$ is a projective representation with projective ambiguity a $2r$-root of unity, we have $\rho_r(f)^N=\zeta_{2r}^k\  \mathrm{Id}_{RT_r(\Sigma,c)}.$  Since $N$ and $r$ are coprime, by changing the lift $\rho_{r,c}(f)$ by a power of $\zeta_{2r}$ we can assume actually that $\rho_{r,c}(f)^N=\pm \mathrm{Id}_{RT_r(\Sigma,c)}.$ Then $\rho_{r,c}(f)$ is diagonalizable, with eigenvalues that are $2N$-th roots of unity. This implies that $|\Tr \rho_{r,c} (f)| \in \Z[\zeta_{2N}].$

On the other hand we know that the traces of quantum representations $\rho_{r, c}$ take values in $\QQ [\zeta_{2r}]$, and the same is true  for $RT_r$ invariants of any closed $3$-manifold with a colored link (see Theorem \ref{thm:TQFTdef}-(2)). Thus we have
$$|\Tr \rho_{r,c}(f)| \in \Z[\zeta_{2N}]\cap \QQ[\zeta_{2r}].$$
By elementary number theory, it is known that
$$\QQ[\zeta_m]\cap \QQ[\zeta_n]= \QQ[\zeta_{d}],$$
where $d=\mathrm{gcd}(m, n)$. See, for example,  \cite[Theorem 3.4]{Konrad} for a proof of this fact. 
 Hence, since $\QQ[\zeta_2]=\QQ[-1]=\QQ$ and the algebraic integers in $\QQ$ are the integers,
 we have $\Z[\zeta_{2N}]\cap \QQ[\zeta_{2r}]=\Z.$ 
Thus we obtain $|\Tr \rho_{r,c}(f)| \in \Z.$ 
\end{proof}

It is known that the mapping torus of a class $f\in \mathrm{Mod}(\Sigma)$ 
is a Seifert fibered manifold if and only if $f$ is periodic. In particular, the complement $S^3\setminus T_{p,q}$ of a $(p, q)$ torus link is fibered with periodic monodromy of order 
$pq$ \cite{orlik:seifert-manifolds}. As a corollary of Theorem \ref{thm:integertrace} we have the following result which in particular implies Corollary \ref{cor:toruslinks}  that settles a question of \cite{Chen-Yang}.

\begin{corollary}\label{integerTV} Let  $M_f$ be the mapping torus of a periodic mapping class $f\in \mathrm{Mod}(\Sigma)$ of order $N$. Then, for  any odd integer $r\geqslant 3$, with $\mathrm{gcd}(r, N)=1$,   we have
$TV_r(M_f)\in \Z,$
 for any choice of root of unity.
\end{corollary}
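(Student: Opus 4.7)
The plan is to combine Theorem \ref{thm:integertrace} with the decomposition of $TV_r(M_f)$ into squared traces of quantum representations that was established in the proof of Theorem \ref{amu-ltv}. Recall that in that proof it was shown, via Theorem \ref{thm:BePe} together with expansion in the orthonormal basis $\{{\bf e}_c\}$ of $RT_r(\partial M_f)$ from Theorem \ref{thm:TQFTbasis}-(2), that
$$TV_r(M_f, q=e^{2i\pi/r}) = \sum_c |\Tr \rho_{r,c}(f)|^2,$$
where the sum ranges over all $U_r$-colorings of the boundary tori of $M_f$ (equivalently, over $U_r$-colorings of $\partial \Sigma$). In the case where $\Sigma$ is closed, the sum has a single term $|\Tr\rho_r(f)|^2$, and the identity reduces to $TV_r(M_f)=|RT_r(M_f)|^2$; so the formula is uniform across the two cases.

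With this in hand, I would apply Theorem \ref{thm:integertrace}: since $\gcd(r,N)=1$, we have $|\Tr\rho_{r,c}(f)|\in\Z$ for every $U_r$-coloring $c$, so each summand $|\Tr\rho_{r,c}(f)|^2$ is a non-negative integer. There are only finitely many colorings, so
$$TV_r(M_f, q=e^{2i\pi/r}) \in \Z.$$
This handles the distinguished root of unity $q=e^{2i\pi/r}$ used throughout the paper.

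To remove the dependence on the choice of root of unity, I would invoke Galois invariance, exactly as in the opening of the proof of Theorem \ref{thm:integertrace}. The Turaev-Viro state sum shows that $TV_r(M_f, q)\in\QQ[\zeta_r]$ for any primitive $2r$-root of unity $q$, and different choices of $q$ yield values that are Galois conjugate to one another in $\QQ[\zeta_r]$. Since $\Z$ is fixed pointwise by the Galois group $\mathrm{Gal}(\QQ[\zeta_r]/\QQ)$, integrality for one primitive root of unity implies integrality for all.

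The steps are essentially routine assembly of the earlier results; the only point one must be attentive to is that the identity $TV_r(M_f)=\sum_c|\Tr\rho_{r,c}(f)|^2$ was derived in the setting where $f$ was pseudo-Anosov, but the derivation only used the TQFT formalism and the formula of Theorem \ref{thm:tracequantumrep}, neither of which depends on the Nielsen--Thurston type of $f$, so the identity is valid verbatim for periodic $f$ as well.
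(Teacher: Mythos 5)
Your proof is correct and follows essentially the same route as the paper: write $TV_r(M_f)=\sum_c |\Tr \rho_{r,c}(f)|^2$ as in the proof of Theorem \ref{amu-ltv} and apply Theorem \ref{thm:integertrace} to each summand. Your additional remarks (that the decomposition does not depend on the Nielsen--Thurston type of $f$, and the Galois argument for independence of the root of unity, which the paper absorbs into Theorem \ref{thm:integertrace} itself) are accurate elaborations of points the paper leaves implicit.
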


\begin{proof}

As in the proof of Theorem \ref{amu-ltv}, we write
$$TV_r(M_f)=\underset{\mathbf{c}}{\sum} |\Tr \rho_{r,\mathbf{c}}(f)|^2$$
where $f$ is the monodromy and the sum is over $U_r$-colorings of the components of $\partial \Sigma$.
 But if $r$ is coprime with $N$ this sum is a sum of integers by Theorem \ref{thm:integertrace}.
\end{proof}

Corollary \ref{integerTV} implies that for mapping tori of periodic classes the Turaev-Viro invariants take integer values at infinitely many levels and this property is independent of the choice of
the root of unity. In contrast with this we have the following, were $lTV$ is defined in the Introduction.

\begin{proposition}\label{prop:integertrace} Let $f \in \mathrm{Mod}(\Sigma)$ such that $lTV(M_f)>0.$ 
Then,  there can be at most finitely many odd integers $r$ such that $TV_r(M_f)\in \Z$.
\end{proposition}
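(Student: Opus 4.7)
\noindent\textbf{Proof plan for Proposition \ref{prop:integertrace}.}
My plan is to argue by contradiction: assume that $TV_r(M_f)\in\mathbb{Z}$ for an infinite set $\mathcal{R}$ of odd integers, and derive a polynomial upper bound on $TV_r(M_f)$ along $\mathcal{R}$, contradicting the hypothesis $lTV(M_f)>0$.

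As in the proof of Theorem \ref{amu-ltv}, I would first express the invariant as
\[
TV_r(M_f)=\sum_{\mathbf{c}}\lvert \Tr\rho_{r,\mathbf{c}}(f)\rvert^2.
\]
Since each trace lies in $\mathbb{Q}[\zeta_{2r}]$ and complex conjugation is the Galois automorphism $\zeta_{2r}\mapsto \zeta_{2r}^{-1}$, the quantity $\lvert \Tr\rho_{r,\mathbf{c}}(f)\rvert^2=\Tr\rho_{r,\mathbf{c}}(f)\cdot\overline{\Tr\rho_{r,\mathbf{c}}(f)}$ lies in $\mathbb{Q}[\zeta_{2r}]$, hence so does $TV_r(M_f)$. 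If $TV_r(M_f)\in\mathbb{Z}$, then it is fixed by the whole Galois group $\mathrm{Gal}(\mathbb{Q}[\zeta_{2r}]/\mathbb{Q})$.

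Next I would record that each Galois automorphism $\sigma_k\co \zeta_{2r}\mapsto\zeta_{2r}^k$ (for $k$ coprime to $2r$) commutes with the $\lvert\cdot\rvert^2$ operation on elements of $\mathbb{Q}[\zeta_{2r}]$ because the Galois group is abelian, and that applying $\sigma_k$ to the coefficients of $RT_r$ corresponds to computing the invariant with a different choice of primitive $2r$-th root of unity. Therefore $\sigma_k\bigl(TV_r(M_f,\zeta_{2r})\bigr)=TV_r(M_f,\zeta_{2r}^k)$, and the integrality assumption forces
\[
TV_r(M_f,\zeta_{2r}^k)=TV_r(M_f,\zeta_{2r})\qquad\text{for every $k$ coprime to $2r$.}
\]

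The final step, and the core of the argument, is to identify a particular exponent $k_r$ for which $TV_r(M_f,\zeta_{2r}^{k_r})$ is bounded by a polynomial in $r$. For this I would appeal to the structural results of \cite{BHMV2}: among the Galois conjugate theories parametrized by primitive $2r$-th roots of unity, there is always one at which the Hermitian form on $RT_r(\Sigma,\mathbf{c})$ is positive definite, so that the corresponding projective representation $\rho_{r,\mathbf{c}}^{(k_r)}$ is unitary. At such a root, every eigenvalue of $\rho_{r,\mathbf{c}}^{(k_r)}(f)$ has modulus $1$, giving $\lvert\Tr\rho_{r,\mathbf{c}}^{(k_r)}(f)\rvert\leqslant\dim RT_r(\Sigma,\mathbf{c})$. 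Combined with the polynomial bound on $\dim RT_r(\Sigma,\mathbf{c})$ from Theorem \ref{thm:TQFTbasis}(1) and the fact that the number of $U_r$-colorings $\mathbf{c}$ of $\partial\Sigma$ is polynomial in $r$, this yields $TV_r(M_f,\zeta_{2r}^{k_r})\leqslant P(r)$ for some polynomial $P$. Via the Galois invariance of the previous paragraph, the same polynomial bound holds for $TV_r(M_f)$ along $\mathcal{R}$, contradicting $lTV(M_f)>0$.

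The step I expect to require the most care is the identification of the ``unitary'' Galois conjugate: one needs to verify, inside the skein-theoretic framework of \cite{BHMV2}, that among the primitive $2r$-th roots of unity $\zeta_{2r}^k$ there is at least one exponent $k_r$ coprime to $2r$ for which the TQFT Hermitian pairing on every $RT_r(\Sigma,\mathbf{c})$ is positive definite. Once this is in hand, the bound $\lvert\Tr\rvert\leqslant\dim$ and Theorem \ref{thm:TQFTbasis}(1) make the rest of the argument routine.
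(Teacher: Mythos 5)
Your proposal is correct, and its skeleton is the paper's: both arguments rest on the observation that if $TV_r(M_f)\in\Z$ then it is fixed by the Galois group, hence equals its value at a conjugate root of unity, and then one plays the exponential growth at the Chen--Yang root against a polynomial bound at that conjugate. Where you genuinely diverge is in how the polynomial bound is produced. The paper outsources it: it invokes Garoufalidis's theorem together with \cite{BePe} to say that $TV_r(M_f,e^{\frac{i\pi}{r}})$ grows at most polynomially in $r$. You instead reprove the needed bound directly, by locating a Galois conjugate $\zeta_{2r}^{k_r}$ at which the Hermitian form of \cite{BHMV2} is positive definite, so that $\rho^{(k_r)}_{r,c}(f)$ is projectively unitary, $|\Tr\rho^{(k_r)}_{r,c}(f)|\leqslant \dim RT_r(\Sigma,c)$, and the bound follows from Theorem \ref{thm:TQFTbasis}(1) and the polynomial count of colorings. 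This is essentially the mechanism underlying Garoufalidis's result, so your route is more self-contained, but note that the unitarity input you flag is genuinely external to what this paper quotes: Theorem \ref{thm:TQFTdef} only provides a Hermitian form, not its definiteness, so you must cite or prove the classical fact that for each odd $r$ some primitive $2r$-th root makes the $SO(3)$ theory unitary (positivity of the relevant quantum dimensions and of the norms of the pants-decomposition basis vectors at a suitable root). Two minor points to handle: Galois equivariance of $RT_r$ itself holds only up to a root-of-unity phase coming from the framing anomaly, so it is cleanest to apply $\sigma_k$ to $TV_r$ or to $|\Tr\rho_{r,c}(f)|^2=RT_r(M_{\tilde f},(L,c))\,\overline{RT_r(M_{\tilde f},(L,c))}$, where the phase cancels, exactly as you implicitly do; and Theorem \ref{thm:TQFTbasis}(1) excludes a few low-complexity surfaces, but their spaces have dimension at most linear in $r$ (e.g.\ the torus case of Theorem \ref{thm:TQFTbasis}(2)), so the polynomial bound on $\sum_{\mathbf c}\dim RT_r(\Sigma,\mathbf c)^2$ is unaffected.
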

\begin{proof} As in the proof of Corollary \ref{integerTV} and Theorem \ref{thm:integertrace} for any odd $r\geq 3$ and any choice of a  primitive $2r$-root of unity $\zeta_{2r}$ 
the invariant  $TV_r(M_f, e^{\frac{2i\pi}{r}})$  lies in $\mathbb F=\QQ [e^{\frac{i\pi}{r}}].$ 

Suppose that there are arbitrarily large odd levels  $r$ such that $TV_r(M_f, e^{\frac{2i\pi}{r}})\in \Z$.
Then since $\Z$ is left fixed under the action of the Galois group of $\mathbb F$, we would have 
$TV_r(M_f, e^{\frac{i\pi}{r}})=TV_r(M_f, e^{\frac{2i\pi}{r}})$, for all $r$ as above.

But this is contradiction: Indeed, on the one hand, the assumption $lTV(M_f)>0,$
implies that the invariants $TV_r(M_f, e^{\frac{2i\pi}{r}})$ grow exponentially in $r$; that is $TV_r(M_f, e^{\frac{2i\pi}{r}}) > \exp{Br}$, for some constant $B>0$.
On the other hand, by combining results of \cite{Garoufalidis} and  \cite{BePe}, the invariants $TV_r(M_f, e^{\frac{i\pi}{r}})$ grow at most polynomially in $r$; that is $TV_r(M_f, e^{\frac{i\pi}{r}})\leqslant D r^N$, for some constants
$D>0$ and $N$. 
\end{proof}

As discussed earlier the Turaev-Viro invariants volume conjecture of \cite{Chen-Yang} implies that  for all pseudo-Anosov  mapping classes we have $lTV(M_f)>0,$ and the later hypothesis implies the AMU Conjecture. These implications and Proposition \ref{prop:integertrace} prompt the following conjecture suggesting that the Turaev-Viro invariants of mapping tori distinguish
pseudo-Anosov mapping classes from periodic ones.

\begin{conjecture}\label{conj:integertrace} Suppose that $f\in \mathrm{Mod}(\Sigma)$  is
pseudo-Anosov. Then, there can be at most finitely many odd integers $r$ such that $TV_r(M_f)\in \Z.$
\end{conjecture}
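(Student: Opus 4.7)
The natural strategy is to deduce the conjecture from Proposition \ref{prop:integertrace} by showing that $lTV(M_f)>0$ for every pseudo-Anosov mapping class $f$. Since $M_f$ is hyperbolic by Thurston's mapping torus theorem, this is exactly a qualitative form of the Chen-Yang volume conjecture restricted to hyperbolic fibered $3$-manifolds; once available, Proposition \ref{prop:integertrace} finishes the proof with no further input. Unwinding the reduction for clarity: if $TV_r(M_f)\in \Z$ for infinitely many odd $r$, then $\Z$ being fixed by $\mathrm{Gal}(\QQ[\zeta_{2r}]/\QQ)$ would force $TV_r(M_f,e^{2i\pi/r})=TV_r(M_f,e^{i\pi/r})$ along this subsequence, contradicting the polynomial bound on the right-hand invariant (from \cite{Garoufalidis, BePe}) as soon as the left-hand invariant has an exponential lower bound on some cofinal set of $r$.

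The hard part will be obtaining that exponential lower bound, i.e.\ $lTV(M_f)>0$ for every pseudo-Anosov $f$. This is wide open in full generality, but a large class of examples can be handled unconditionally from the present paper: for every mapping class $f$ arising as the monodromy of one of the hyperbolic fibered links constructed in Theorems \ref{hyperbgeneral} and \ref{infinitegen}, the mapping torus $M_f$ contains by design a sublink complement with $lTV>0$, so by Theorem \ref{thm:ltvdehnfilling} we have $lTV(M_f)>0$ and Proposition \ref{prop:integertrace} applies verbatim. In particular the infinite families of pseudo-Anosov mapping classes produced in Theorem \ref{general} all satisfy Conjecture \ref{conj:integertrace}.

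To attack the general case without invoking the full Chen-Yang conjecture, one might try to work directly with the trace decomposition $TV_r(M_f)=\sum_c |\Tr \rho_{r,c}(f)|^2$ established in the proof of Theorem \ref{amu-ltv}. In the periodic case the integrality of Theorem \ref{thm:integertrace} rested on $\rho_{r,c}(f)$ being diagonalizable with eigenvalues that are $2N$-th roots of unity for $\mathrm{gcd}(N,r)=1$, forcing $|\Tr \rho_{r,c}(f)|\in \Z[\zeta_{2N}]\cap \QQ[\zeta_{2r}]=\Z$. For pseudo-Anosov $f$ there is no analogous bound on the multiplicative orders of the eigenvalues of $\rho_{r,c}(f)$, so one expects $|\Tr \rho_{r,c}(f)|$ to generate a nontrivial extension of $\QQ$ for most $r$, and the Galois conjugates of $TV_r(M_f)$ to be pairwise distinct. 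Making this dichotomy between the periodic and pseudo-Anosov cases quantitative — sharply enough to rule out integrality for all but finitely many $r$ — is the genuine obstacle, and presumably requires fine arithmetic information about the eigenvalues of the quantum representations evaluated on pseudo-Anosov classes.
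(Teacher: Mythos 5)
You should be aware that the statement you were asked to prove is stated in the paper as an open conjecture (Conjecture \ref{conj:integertrace}); the paper offers no proof of it, only the supporting evidence you have essentially reconstructed. Your reduction is exactly the paper's own reasoning: Proposition \ref{prop:integertrace} shows that $lTV(M_f)>0$ rules out integrality of $TV_r(M_f)$ for all but finitely many odd $r$ (via the Galois argument comparing $TV_r(M_f,e^{2i\pi/r})$ with the polynomially bounded $TV_r(M_f,e^{i\pi/r})$), and the hypothesis $lTV(M_f)>0$ for every pseudo-Anosov $f$ is precisely the qualitative content of the Chen--Yang volume conjecture \cite{Chen-Yang} for hyperbolic mapping tori, which is open. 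So your proposal is not a proof of the conjecture, and you say so honestly; the genuine gap you identify --- an unconditional exponential lower bound on $TV_r(M_f,e^{2i\pi/r})$, or alternatively arithmetic control of the eigenvalues of $\rho_{r,c}(f)$ sharp enough to exclude $\sum_c |\Tr\rho_{r,c}(f)|^2 \in \Z$ for large $r$ --- is exactly what is missing in the literature as well.

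Your unconditional observations are correct and worth keeping: for any monodromy $f$ produced by Theorems \ref{hyperbgeneral} and \ref{infinitegen} (in particular the families of Theorem \ref{general}), the mapping torus contains a sublink complement with $lTV>0$, so Theorem \ref{thm:ltvdehnfilling} gives $lTV(M_f)>0$ and Proposition \ref{prop:integertrace} applies verbatim; these classes do satisfy Conjecture \ref{conj:integertrace}. Just be careful not to present the conjecture as proved in general: the contrast you draw with the periodic case (Theorem \ref{thm:integertrace}, where diagonalizability with $2N$-th root of unity eigenvalues forces $|\Tr\rho_{r,c}(f)|\in\Z[\zeta_{2N}]\cap\QQ[\zeta_{2r}]=\Z$) is heuristic motivation, not an argument, since for pseudo-Anosov classes nothing currently prevents the traces from conspiring to give integer values of $TV_r$ along an infinite set of levels without the growth hypothesis.
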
 

\bibliographystyle{hamsplain}
\bibliography{biblio}
\end{document}